\documentclass{article}
\usepackage[text={160mm,232mm},centering]{geometry}
\usepackage{amsmath,amssymb,amsthm,mathabx,verbatim}
\usepackage[
backend=biber,
style=alphabetic,
maxalphanames=4,
maxbibnames=4,
isbn=false,
url=false,
giveninits=true,
doi=true,
]{biblatex}
\usepackage[bookmarks=true]{hyperref}
\usepackage{enumitem}
\usepackage{cleveref}
\usepackage{graphicx} 
\usepackage{xcolor}
\usepackage{soul}
\usepackage{float}
\usepackage{longtable}

\newcommand{\tx}{\textnormal}
\newcommand{\ol}{\overline}

\newcommand{\QQ}{\mathbb Q}
\newcommand{\ZZ}{\mathbb{Z}}
\newcommand{\CC}{\mathbb{C}}
\newcommand{\RR}{\mathbb R}
\newcommand{\FF}{\mathbb F}
\newcommand{\HH}{\mathbb{H}}
\newcommand{\PP}{\mathbb P}
\newcommand{\bT}{\mathbb T}
\newcommand{\calO}{\mathcal O}
\newcommand{\calN}{\mathcal N}

\newcommand{\smt}[4]{\left( \begin{smallmatrix} #1 &#2\\ #3 &#4\\ \end{smallmatrix} \right) } 
\newcommand{\End}{\mathrm{End}}
\newcommand{\lcm}{\mathrm{lcm}}
\newcommand{\Gal}{\mathrm{Gal}}
\newcommand{\Norm}{\mathrm{Norm}}
\newcommand{\Aut}{\mathrm{Aut}}
\newcommand{\GL}{\mathrm{GL}}
\newcommand{\PGL}{\mathrm{PGL}}

\newcommand{\GLn}{\mathrm{GL}_2(\mathbb Z/n\mathbb Z)}
\newcommand{\SL}{\ensuremath{\mathrm{SL}}}
\newcommand{\SLZ}{\mathrm{SL}_2(\mathbb Z)}

\newcommand{\SLn}{\mathrm{SL}_2(\mathbb Z/n\mathbb Z)}

\newcommand{\Id}{\textnormal{I}}
\newcommand{\ns}{\textnormal{ns}}
\newcommand{\s}{\textnormal{s}}
\newcommand{\CM}{\textnormal{C}}

\newcommand{\lto}{\longrightarrow}
 
\newcommand{\vv}[2]{\left( \begin{smallmatrix} #1 \\#2 \end{smallmatrix} \right) } 
\newcommand{\diam}[1]{\langle #1 \rangle }

\numberwithin{equation}{section}

\newtheorem*{thm*}{Theorem} 
\newtheorem{thm}[equation]{Theorem}
\newtheorem*{prop*}{Proposition} 
\newtheorem{prop}[equation]{Proposition} 
\newtheorem*{lem*}{Lemma} 
\newtheorem{lem}[equation]{Lemma} 
\newtheorem*{cor*}{Corollary} 
\newtheorem{cor}[equation]{Corollary} 
\newtheorem{Algorithm}[equation]{Algorithm}

\newtheorem{Conjecture}[equation]{Conjecture}
\newtheoremstyle{named}{}{}{\itshape}{}{\bfseries}{.}{.5em}{#3}
\theoremstyle{named} 
\newtheorem*{namedtheorem}{Theorem} 
\theoremstyle{remark}
\newtheorem*{rem*}{Remark} 
 
\newtheorem{rem}[equation]{Remark} 
\newtheorem{ese}[equation]{Example} 
\newtheorem{Claim}[equation]{Claim} 
\theoremstyle{definition}
\newtheorem*{defi*}{Definition} 
\newtheorem{defi}[equation]{Definition} 

\usepackage{authblk}
\title{
Automorphisms of general modular curves
}
\author[1]{Valerio Dose}
\author[2]{Guido Maria Lido}
\author[3]{Pietro Mercuri}
\affil[1]{Sapienza University of Rome}
\affil[2]{Università di Roma Tor Vergata}
\affil[3]{Università degli Studi di Palermo}

\date{}

\begin{document}

\maketitle

\begin{abstract}
We prove that for a geometrically connected modular curve $X_H$ of prime level larger than $311$, all automorphisms are modular, i.e., they arise from the moduli interpretation of~$X_H$. The proof relies on a general criterion involving the gonality of~$X_H$ and the dimension of the CM part of the Jacobian of~$X_H$. For arbitrary level, we give algorithms to compute the modular automorphisms and, in many cases, to prove that there are no other automorphisms. Finally, we apply these algorithms to the curves in the database LMFDB.
\end{abstract}

\section{Introduction}

Given a smooth projective curve of genus $g \ge 2$ in characteristic~$0$, its automorphism group is  finite, as Schwarz showed in \cite{Schwarz}, and indeed has cardinality at most $84(g-1)$, by a theorem of Hurwitz \cite[Section~II.7, p.~424]{Hur}. In fact, for genus $g \ge 3$, a generic curve $C$ satisfies $|\Aut(C)|=1$; see, e.g., \cite{Moduli_cornalba, Moduli_sing, Moduli_Catanese} for the study, in moduli, of curves with non-trivial automorphisms. Moreover, considerable effort has been devoted to improving Hurwitz's bound and more generally to determining which groups arise as (subgroups of) automorphism groups of projective curves; see, e.g., \cite{Accola, Maclachlan, ConderAMS, BreuerBook}. 

In this article, we are concerned with the automorphism groups of modular curves that parameterize (as coarse moduli spaces) elliptic curves with level structure: given $H<\GLn$ for a positive integer~$n$, an $H$-structure is a basis of the $n$-torsion subgroup up to a change of basis in~$H$. 
Since we work with the coarse moduli spaces, we assume that $H$ contains $-\Id$. Moreover, we restrict ourselves to the case where $\det(H) = (\ZZ/n\ZZ)^\times $, so that $X_H$ is irreducible over~$\CC$. 
For such curves $X_H$, we have automorphisms that come from their moduli interpretation. For example, each matrix $m \in \GLn$ that normalizes~$H$ induces an automorphism of $X_H$ by a change of basis on the $n$-torsion of the elliptic curves.
More generally, one can define the notion of \emph{modular automorphism} by looking at the modular description of the complex points on~$X_H$, see \Cref{def:modaut}. For example, for the classical modular curves $X_0(n)$, the modular automorphisms were determined in the seminal paper \cite{AL70}, with a small gap which was later filled in a couple of different ways (see \cite{AS90} and \cite{Bar08}).

A natural question, then, is the following:
\begin{equation*}\label{eq: question}
\text{given a modular curve $X_H$, are all its automorphisms modular?}
\end{equation*}

This question has been extensively studied in specific families, and for each such family the answer turns out to be yes, 
up to finitely many exceptions. 
For classical Borel curves $X_0(n)$, this was proved in \cite{KM88}, with a few inaccuracies corrected in \cite{Har11} (the same papers, together with \cite{Elk90}, also solved the question completely for Borel curves).
For the curves associated to Cartan subgroups or their normalizers, this has been proven in \cite{DLM22} (and, for prime level, the same article together with \cite{Gon16} settled the question). 
Notice that the same phenomenon also occurs for Atkin-Lehner quotients $X_0^*(n)$ of $X_0(n)$, for square-free level~$n$, as proved in \cite{BG21} (these curves are not of the form $X_H$).
For the same families, see also the earlier works \cite{Ogg75aut}, \cite{Ogg77}, \cite{Dos16}, \cite{BH03}, and, particularly, Section~3 of the survey \cite{BarsSurvey}. 

Because of the above results, we call \emph{exceptional} the automorphisms that are not modular. Their existence often raises interest because of their connection to rational points.  
The best-known example is $X_0(37)$, which, as proved in \cite[Theorem~1]{Ogg74}, is hyperelliptic with the hyperelliptic involution being not modular and sending the two rational cusps to two non-CM, non-cuspidal rational points. Other examples can be found among the curves associated to non-split Cartan subgroups and their normalizers: the curve $X_\textnormal{ns}(11)$ has an ``exotic'' involution (see \cite{DFGS14}), which explains a symmetry in the Galois representations attached to the rational points of the elliptic curve $X_\ns^+(11)$, and $X_\textnormal{ns}^+(16)$ is hyperelliptic and bielliptic, with all the non-trivial automorphisms being non-modular, again giving rise to non-CM, non-cuspidal rational points (see \cite[Remark~A.1]{DLM22}).
More generally, in \cite[Sections~7 and~8]{derickx2026rationalpointsmodularcurves} Derickx, Hashimoto, Najman and Shnidman explain all the known rational points on modular curves by geometric means, that is, by taking lifts under abelian covers, or by applying algebraic maps (including automorphisms) or by other techniques called collinearity and fiber splicing. In their analysis, they exploit the non-modular automorphisms mentioned above, together with the non-modular automorphisms of the curves with LMFDB labels \href{https://beta.lmfdb.org/ModularCurve/Q/25.75.2.a.1}{25.75.2.a.1}, \href{https://beta.lmfdb.org/ModularCurve/Q/25.50.2.a.1}{25.50.2.a.1}, and \href{https://beta.lmfdb.org/ModularCurve/Q/45.54.2.c.1}{45.54.2.c.1}. Other interesting examples of modular curves with non-modular automorphisms (even though without known applications to rational points) are $X_0(63)$ (see \cite{Elk90}), $X_0(108)$ (see \cite{Har11}), $X_\textnormal{ns}^+(20)$ (see \cite[Remark ~A.1]{DLM22}), $X_{\textnormal{s}}^+(11)$ (see \cite{Gon16}), and  $X_{\Delta_3}(37)$ (the unique intermediate curve between $X_1(37)$ and $X_0(37)$ with the property that the map $X_{\Delta_3}(37) \to X_0(37)$ has degree~$3$, see \cite{Jeon_Intermediate_bielliptic}). 

In this work, we are concerned with general irreducible modular curves~$X_H$. Our main result is the following.
\begin{namedtheorem}[\Cref{thm: prime level}]
All the automorphisms of $X_H$ are modular for each prime $p>311$ and for each each proper 
subgroup $H <  \GL_2(\ZZ/p\ZZ)$ containing $-\Id$ and satisfying $\det(H) = (\ZZ/p\ZZ)^\times$.
\end{namedtheorem}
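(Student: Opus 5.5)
The plan is to deduce the statement from the general criterion announced in the abstract. That criterion compares the gonality of $X_H$ with the dimension of the CM part of its Jacobian. The idea behind it is as follows. Let $u$ be an automorphism of $X_H$ and $\gamma$ a modular automorphism, for instance a Hecke-compatible one such as a diamond operator, or more usefully the "Hecke correspondence" $T_\ell$ for a small prime $\ell\nmid p$. The Hecke algebra acts on $J_H$, and $u$ acts on $J_H$ by pushforward. On the non-CM part of $J_H$, the endomorphism algebra is controlled by Ribet's results on the simple factors $A_f$, so $u_*$ must commute with the Hecke operators up to twists by inner twists. Comparing $T_\ell$ with $u T_\ell u^{-1}$ gives a divisor of small degree (about $2(\ell+1)$) whose difference lies in the CM part. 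If the gonality exceeds this degree plus a term involving the CM dimension, the difference must vanish. Then $u$ commutes with $T_\ell$ as a correspondence, and one deduces that $u$ normalizes the uniformizing group, i.e. $u$ is modular. I will therefore assume the criterion in this form: if $\mathrm{gon}(X_H)$ exceeds an explicit function of $\ell$ and $\dim J_H^{\mathrm{CM}}$, then $\Aut(X_H)$ consists of modular automorphisms.

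The work then reduces to two estimates for prime level $p$. For the gonality, I will use the Abramovich/Kim--Sarnak lower bound $\mathrm{gon}_\CC(X_H)\ge \frac{\lambda_1}{24}[\PSL_2(\ZZ):\Gamma_H]$ with $\lambda_1\ge 975/4096$. The index equals $[\SL_2(\FF_p):H\cap \SL_2]$ up to $\pm$. By Dickson's classification, a proper $H$ with surjective determinant lies in a Borel subgroup, in the normalizer of a split or non-split Cartan subgroup, or in an exceptional subgroup with image $A_4,S_4,A_5$ in $\PGL_2$. In every case the index is at least of order $p$ (Borel: $p+1$), and it is of order $p^2$ in the Cartan and exceptional cases. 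For the CM part, the CM newforms of level $p$ or $p^2$ come from Hecke characters of the imaginary quadratic fields ramified only at $p$, essentially $\QQ(\sqrt{-p})$ when $p\equiv 3 \bmod 4$. Their number is bounded by a multiple of the class number $h(-p)\ll \sqrt p\log p$, and in the Cartan cases by $O(p)$ times the relevant character count. So $\dim J_H^{\mathrm{CM}}$ grows strictly slower than the index.

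For the casework, I will handle the subgroups by type, comparing explicit bounds. The case where $H$ is contained in a Borel subgroup needs care, because the index may be only $p+1$ while $X_H$ is an intermediate curve between $X_1(p)$ and $X_0(p)$. There I will use that the CM dimension of $J_0(p)$, together with its twists by characters of $(\ZZ/p\ZZ)^\times/\det$, is $O(h(-p))$. This is $O(\sqrt p\log p)$, so linear gonality wins for large $p$. Making the constant explicit should give a threshold. For the remaining small range, from wherever the explicit bound kicks in down to $311$, I will use sharper class number values and exact CM dimension counts, computed via dimension formulas for CM forms. Where needed I will also use the already known results for $X_0(p)$, $X_{\s}^+(p)$ and $X_{\ns}^+(p)$ cited above.

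The main obstacle I expect is the Borel-type case, with the index only linear in $p$ and the CM contribution of size about $\sqrt p\log p$. Getting a fully explicit inequality valid for all $p>311$, rather than merely for sufficiently large $p$, is where the constants (the Kim--Sarnak $\lambda_1$, the choice $\ell=2$ or $3$, and explicit class number bounds) must be tracked carefully. I would first try $\ell=2$ together with the bound $h(-p)<\frac{\sqrt p}{\pi}(\log p+2)$, and check numerically that the criterion holds beyond $311$.
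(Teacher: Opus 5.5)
\textbf{The Borel case does not close.} This is the case you single out as the crux. A subgroup of a Borel subgroup of order prime to $p$ lies in a split Cartan, so the real Borel case is $p\mid |H|$, i.e.\ $\Gamma_1(p)\subseteq\Gamma_H\subseteq\Gamma_0(p)$.

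The paper does not track constants here. It invokes Kamienny's determination of the automorphisms of these intermediate curves \cite{KAMIENNY_2003}, which holds exactly for $p>311$. That is where the number $311$ comes from, so you should not expect it to emerge from $\lambda_1$ and class-number estimates.

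Your proposed comparison, $\dim J_H^{\mathrm C}=O(h(-p))$ against a gonality of about $(p+1)/100$, only wins for $p$ of order $10^5$. The fact you are missing is that here $J_H$ is an isogeny factor of $J_1(p)$, whose CM part is zero. A CM newform of prime level $p$ would come from a Gr\"ossencharacter of $\QQ(\sqrt{-p})$ of weight $1$ and conductor $(1)$. No such character exists because of the unit $-1$; the paper makes this observation for $J_1(p^2)$.

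With that fact, a criterion-based proof of this case is possible. The first condition of \Cref{cor:CShighgen} reduces to $g\ge 2$. Take $\ell=2$ and put $d=[\Gamma_0(p):\Gamma_H]$. The Abramovich bound $\frac{975}{4096}\cdot\frac{d(p+1)}{24}$ gives gonality $\ge 6$ for every $d\ge 2$ and $p>311$. For $X_0(p)$ itself ($d=1$) it gives this only when $p\ge 509$. So for $311<p<509$ you still need the known determination of $\Aut(X_0(p))$ (Ogg; Kenku--Momose \cite{KM88}). As written, your plan contains neither Kamienny's theorem nor this argument.

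\textbf{The criterion you assume is not the one proved in \Cref{cor:CShighgen}, and this matters for your numerics.} In the paper, the CM dimension is compared with the genus, $\dim J_H^{\mathrm C}\le \frac12 g-1$. The gonality only needs to be at least $2\ell+2$.

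Your sketched mechanism, in which a difference divisor ``lies in the CM part'' and is then killed by the gonality, is not valid. A degree-zero class lying in a nonzero abelian subvariety need not vanish, whatever the gonality. The actual argument runs as follows:
\begin{enumerate}
\item The relation $u^{\sigma^2}=\langle\ell\rangle u\langle\ell\rangle^{-1}$, with $\sigma$ a Frobenius at $\ell$, holds on $J_H^{\mathrm N}$ by \Cref{thm:J_H_commutation}.
\item It therefore holds as an identity of curve automorphisms, because an automorphism is determined by its action on $J_H^{\mathrm N}$ once $\dim J_H^{\mathrm N}>\dim J_H^{\mathrm C}+1$.
\item Eichler--Shimura then gives $T_\ell u=u^\sigma T_\ell$ on all of $J_H$. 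Note this is not a comparison of $T_\ell$ with $uT_\ell u^{-1}$.
\item Only now does the gonality force the principal divisor $(T_\ell u-u^\sigma T_\ell)(P-Q)$, of degree at most $2\ell+1$, to vanish.
\end{enumerate}

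Your ``gonality beats CM dimension'' version does follow from the paper's criterion, since $g\ge 2\,\mathrm{gon}-3$. But it loses a factor of about $4$. Already for index-two subgroups of $C_{\ns}^+(p)$, comparing a gonality of about $p(p-1)/100$ with the bound $\frac{p-1}{2}h(-p)$ requires roughly $h(-p)\le p/50$. This fails for many primes just above $311$, so class numbers alone will not finish your finite range. You would need exact CM dimensions subgroup by subgroup, far beyond what is feasible.

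With the genus comparison, class numbers suffice. The paper uses $g\ge p(p-10)/24$, and $g\ge p(p-10)/12-1$ for proper subgroups of the Cartan normalizers. The normalizers themselves are covered by \cite{DLM22} and \cite{Gon16}. It then checks the primes $311<p<2^{14}$ with $p\equiv 3 \bmod 4$ one by one.
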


\medskip
For composite levels, we give computational tools to study automorphisms of modular curves, namely an algorithm to compute the group of modular automorphisms (see \Cref{sec:MAut}) and an easy-to-check criterion implying that all the automorphisms of a certain modular curve are modular (see \Cref{cor:CShighgen} and \Cref{sec:non-MAut}). 

Recently, motivated in part by Mazur's Program B, there has been growing interest in modular curves of composite level, and much of this work has relied on explicit computations, supported by the data collected in the beta version of the LMFDB. In this spirit, we also explicitly analyze modular curves of bounded level, obtaining the following results: 
\begin{itemize}
\item Among the modular curves of level $n \le 70$ and genus $g\ge2$, at least $63.69\%$ of them satisfy the criterion in \Cref{cor:CShighgen}, and consequently have no non-modular automorphisms; in particular, the automorphisms are modular for \emph{all} the $200\,170$ modular curves of level $n\le 70$ and genus $g \ge210$.
\item We found $286$ hyperelliptic modular curves admitting non-modular automorphisms.
\item We found $972$ modular curves with a ``large'' group of automorphisms, that is, they satisfy 
\[
|\mathrm{MAut}(X_H)| = |\mathrm{Aut}(X_H)|> \frac{1}{2}\max_{\operatorname{genus}(X)=\operatorname{genus}(X_H)}|\mathrm{Aut}(X)|,
\]
where $\mathrm{MAut}$ denotes the group of modular automorphisms.
\end{itemize}

The results in the literature and the explicit computations suggest the following.

\begin{Conjecture}
There exists an integer $g_0$ such that, for every geometrically connected modular curve $X_H$ of genus $g\ge g_0$,  all automorphisms of $X_H$ are modular.
\end{Conjecture}

The article is structured as follows. In \Cref{sec: end jac}, we study properties of the endomorphisms of Jacobians of modular curves, which we then use in \Cref{sec:theory_modularity} to prove our main theorem and sufficient conditions for automorphisms to be modular. In \Cref{sec: algos} we detail our algorithms to study  automorphisms of modular curves, which we then use to make explicit computations, as described in \Cref{sec: computational}. Finally, in the appendix, we list curves with many modular automorphisms or with exceptional automorphisms.

\subsection*{Acknowledgments}
We thank Maarten Derickx, Francesc Bars, Eran Assaf, John Voight and Jeremy Rouse for the fruitful discussions.

Lido is supported by the MIUR Excellence Department Projects MathMod@TOV awarded to the Department of Mathematics, University of Rome Tor Vergata,  and by the PRIN PNRR 2022 ``Mathematical Primitives for Post Quantum Digital Signatures''. 
Lido and Mercuri are members of the INdAM group GNSAGA.

\subsection*{Notation and conventions}

Throughout the article, $n$ is a positive integer and $H$ is a subgroup of $\mathrm{GL}_2(\ZZ/n\ZZ)$ containing~$-\Id$.  

We denote by $X(n)$ the modular curve of full level~$n$, which is irreducible over $\QQ$ and has $\varphi(n)$ irreducible components over the algebraic closure $\ol{\QQ}$. We denote by $J(n)$ the Jacobian of $X(n)$ (the notion of Jacobian is well defined for all smooth, projective curves by \cite[Section~8.2, Theorem~3]{BLR90}; the geometric irreducibility is not necessary).
Furthermore, we denote by $X_H := X(n)/H$ the modular curve associated to~$H$, i.e. the coarse moduli space for elliptic curves with $H$-level structures  (see, e.g., \cite{DR73} for definition and properties of modular curves, and \cite[Section~2]{DLM22} or \cite{RSZ22} for a short recall); we denote  by $J_H$ the Jacobian of $X_H$.  
In particular, we denote by $X_1(n)$ the modular curve $X_H$ associated to $H=\left\{\smt ab01 :
a\in(\ZZ/n\ZZ)^\times,b\in\ZZ/n\ZZ\right\}$. We often assume that $\det(H) = (\ZZ/n\ZZ)^\times$, or equivalently that $X_H$ is geometrically connected.

We denote by $\HH:=\{\tau\in\CC:\mathrm{Im}(\tau)>0\}$ the complex upper half-plane and by $\overline{\HH}:=\HH\cup\QQ\cup\{\infty\}$ the extended complex upper half-plane. The group $\mathrm{GL}_2^+(\QQ) :=\{m\in\GL_2(\QQ):\det m>0\}$ acts on the left\footnote{Notice that in \cite{DR73} the action is a right action and that it corresponds to the above after a transposition.} on $\ol \HH$ by M\"obius transformations, i.e., by the formula
\[
\begin{pmatrix}a&b\\c&d\end{pmatrix}  \tau:=\frac{a\tau+b}{c\tau+d}.
\]
This action becomes faithful when we pass to the projectivization $\mathrm{PGL}_2^+(\QQ)$ of $\mathrm{GL}_2^+(\QQ)$. 
We denote by $\PP$ the natural map $\GL_2^+(\QQ)\to \mathrm{PGL}_2^+(\QQ)$.

When $X_H$ is geometrically connected, then $X_H(\CC)$ is biholomorphic to $\Gamma_H\backslash \ol \HH$, with
\[
\Gamma_{H}:=\{\gamma \in \mathrm{SL}_2(\ZZ): (\gamma  \bmod n) \text{ lies in }H\} 
\quad < \quad\mathrm{SL}_2(\ZZ)\quad < \quad\mathrm{GL}_2^+(\QQ).
\]
Indeed, if we associate to $\tau \in \HH$ the elliptic curve $E_\tau = \CC/(\ZZ+\ZZ\tau)$ together with the $H$-structure given by the isomorphism $\psi_\tau \colon (\ZZ/n\ZZ)^2 \to E_\tau[n]$ sending $\vv 10 \mapsto \frac 1n$ and $\vv 01 \mapsto - \frac \tau n$, 
we get a map $\HH\to X_H(\CC)$ that factors through $\Gamma_H\backslash \HH \hookrightarrow X_H(\CC)$, and extends to the isomorphism $\Gamma_H\backslash \ol \HH \cong  X_H(\CC)$.

Given a scheme $X$ over a field~$F$, we denote by $\Aut(X)$ the group of geometric automorphisms of $X$, that is the group of automorphisms of~$X_{\ol F}$ for $\ol F$ an algebraic closure of~$F$. Analogously, for $A/F$ an abelian variety, we denote by $\End(A)$ the ring of endomorphisms of~$A$, possibly defined over~$\ol F$.

\section{Endomorphisms of modular Jacobians}
\label{sec: end jac}
When the genus is at least~$2$, the group of automorphisms of a curve is embedded in the automorphisms of the Jacobian of the curve. In particular, to bound the field of definition of automorphisms it is enough to bound the field of definition of the automorphisms of the Jacobian. More generally, we study the action of the absolute Galois group $\Gal(\ol \QQ/\QQ)$ on the endomorphisms of the Jacobian of a modular curve.

We start from an explicit computation regarding specific automorphisms. For each subgroup $H$ of $\GLn$ and each $d$ in $(\ZZ/n\ZZ)^{\times}$, we have an automorphism $\langle d \rangle$ of $X_H$ acting on non-cuspidal points as follows: given an elliptic curve $E$ and a level structure $\phi\colon (\ZZ/n\ZZ)^2 \overset\sim\to E[n]$, we have   
\begin{equation}\label{diamond}
\langle d \rangle (E,\phi):= (E,d\phi),
\end{equation}
where $d\phi$ is the level structure obtained composing $\phi$ with the multiplication by $d$.
This definition extends the definition of the diamond operator given in \cite[Section~7.9]{DS05} on $X_1(n)$ to general level structures, and is compatible with the Eichler--Shimura relation (see \cite[Theorem 7.9 and Corollary 7.10]{Shi71intro} or for example \cite[Theorem~3.7]{CL23}). Furthermore, definition \eqref{diamond} differs from the one given in \cite[Appendix~A]{RSZ22} for full level structures.

Moreover, for each choice of a primitive $n$-th root of unity $\zeta$, we can lift the Fricke involution of $X_0(n)$ to $X_1(n)$ as follows:

\begin{align}\label{eq_def_W}
W\colon X_1(n) &\lto X_1(n),\\ \nonumber
(E,P) &\longmapsto \left(E/\langle P\rangle,Q+\langle P\rangle\right),
\end{align}
where $Q\in E[n]$ is a point such that the Weil pairing $\langle \cdot, \cdot \rangle$ on $E[n]$ satisfies
\[
\langle Q,P\rangle= \zeta.
\]
We call such a $W$ a Fricke automorphism and $\Gal(\ol\QQ/\QQ)$ acts on it as follows.

\begin{prop}\label{prop: Fricke and Galois}
Let $W$ be a Fricke automorphism on $X_1(n)$ as in \eqref{eq_def_W}. For each prime number $\ell \nmid n$ and for each Frobenius element $\sigma \in \Gal(\ol \QQ/\QQ)$ at $\ell$, we have 
\[
W^{\sigma} =  \langle \ell \rangle  W.
\]
\end{prop}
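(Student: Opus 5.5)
Since $X_1(n)$ and the diamond operators are defined over $\QQ$ via the moduli interpretation, while $W$ depends only on the choice of $\zeta$, the plan is to compute $W^\sigma$ directly on moduli points. Recall that $W^\sigma = \sigma \circ W \circ \sigma^{-1}$ on $\ol\QQ$-points, where $\sigma$ acts on points of $X_1(n)$ through its action on coordinates. On non-cuspidal points $(E,P)$ with $E$ defined over $\ol\QQ$, this means $\sigma(E,P) = (E^\sigma, P^\sigma)$. Two morphisms of curves that agree on a dense set of $\ol\QQ$-points are equal, so it suffices to check the identity on such points.

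Take $(E,P)$ and set $(E',P') := \sigma^{-1}(E,P)$, so $E'^\sigma = E$ and $P'^\sigma = P$. Then $W(E',P') = (E'/\langle P'\rangle, Q' + \langle P'\rangle)$, where $\langle Q', P'\rangle = \zeta$. Applying $\sigma$, and using that quotient isogenies commute with Galois conjugation, gives
\[
W^\sigma(E,P) = \bigl(E/\langle P\rangle,\; Q'^\sigma + \langle P\rangle\bigr).
\]
Since the Weil pairing is Galois-equivariant, $\langle Q'^\sigma, P\rangle = \sigma(\zeta) = \zeta^{\ell}$, because $\sigma$ is a Frobenius at $\ell \nmid n$ and so acts on $\mu_n$ by $\ell$-th powers.

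Now compare with $\langle \ell\rangle W(E,P) = (E/\langle P\rangle, \ell Q + \langle P\rangle)$, where $\langle Q,P\rangle = \zeta$. We have $\langle \ell Q, P\rangle = \zeta^\ell = \langle Q'^\sigma, P\rangle$, so $Q'^\sigma - \ell Q$ pairs trivially with $P$. Since $P$ has exact order $n$ and the Weil pairing is perfect, the annihilator of $P$ in $E[n]$ is $\langle P\rangle$. Hence $Q'^\sigma \equiv \ell Q \pmod{\langle P\rangle}$, and the two images coincide.

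The main subtlety is bookkeeping rather than depth. One must fix the convention for $W^\sigma$ (conjugation by $\sigma$ versus $\sigma^{-1}$). The convention matters because the other choice would give $\langle \ell^{-1}\rangle$. It should be fixed so that it matches the paper's intended action and the Eichler--Shimura normalization. One must also check that $W$ is well defined, i.e. independent of the choice of $Q$. This holds because $Q$ is unique modulo $\langle P\rangle$ by the same perfect-pairing argument used above.
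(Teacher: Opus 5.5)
Your proposal is correct and is essentially the paper's own proof: you evaluate $W^\sigma(E,P)=W\bigl(\sigma^{-1}(E,P)\bigr)^\sigma=(E/\langle P\rangle,\sigma(Q')+\langle P\rangle)$, use Galois-equivariance of the Weil pairing to get $\langle \sigma(Q'),P\rangle=\zeta^\ell$, and conclude $\sigma(Q')\in \ell Q+\langle P\rangle$, with the same convention $W^\sigma=\sigma\circ W\circ\sigma^{-1}$ that the paper uses. Your added remark that the Weil-pairing annihilator of a point $P$ of exact order $n$ is exactly $\langle P\rangle$ makes explicit the last step, which the paper states without justification.
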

\begin{proof}
We prove that for each non-cuspidal point $(E,P)$ on $X_1(n)$ we  have $(\langle \ell \rangle W) (E,P) = W^\sigma (E,P) $. Let $\zeta$ be the root of unity in the definition \eqref{eq_def_W} of $W$, and let  $Q \in E[n]$ be a point with Weil pairing  $\langle Q,P\rangle=\zeta$. 
By construction, we have 
\begin{align*}
(\langle \ell \rangle W )(E,P) &= \left(E/\langle P\rangle,\ell Q+\langle P\rangle\right), \\
W^\sigma (E,P) &= W (E^{\sigma^{-1}},\sigma^{-1}(P))^\sigma= \left(E^{\sigma^{-1}}/\langle \sigma^{-1}(P)\rangle,Q'+\langle \sigma^{-1}(P)\rangle\right)^\sigma = \left(E/\langle P\rangle,\sigma(Q')+\langle P\rangle\right),
\end{align*}
where $Q'$ is a $n$-torsion point of $E^{\sigma^{-1}}$ such that $\langle Q',\sigma^{-1}(P)\rangle=\zeta$. By Galois-equivariance of the Weil pairing, we have $\langle \sigma(Q'), P\rangle = \sigma(\zeta) = \zeta^\ell$, which implies $\sigma(Q') \in \ell Q + \langle P\rangle $, hence the desired equality.
\end{proof}

We now generalize \cite[Theorem~2.3]{Rib75} to modular Jacobians, even when the Hecke algebra is not totally real.    
As in \cite{Rib75}, we want to avoid the CM abelian subvarieties of~$J_H$ (by ``CM abelian variety'' we mean an abelian variety that admits sufficiently many complex multiplications, see \cite[Definition~1.3.1.2 and Proposition~1.3.2.1]{CCO13}).

For every abelian variety~$A$ over a perfect field~$K$, we define the CM part $A^\CM$ of $A$ to be the maximal abelian subvariety of $A_{\ol K}$ that is isogenous to a product of simple CM abelian varieties. We analogously define the non-CM part $A^\mathrm{N}$ of~$A$ to be the maximal abelian subvariety of $A_{\ol K}$ that is isogenous to a product of simple non-CM abelian varieties. Both the CM part and the non-CM part of $A$ are also defined over~$K$, since by definition we have $(A^\CM)^\tau = A^\CM$ and $(A^\mathrm{N})^\tau = A^\mathrm{N}$ for each $\tau \in \Gal(\ol K/K)$. 
The dimension of the CM part and the dimension of the non-CM part are invariant under isogeny. Hence, by looking at the decomposition of $A_{\ol K}$ into simple factors, we see that $\dim (A^\CM) + \dim(A^\mathrm{N}) = \dim(A)$.

\begin{lem}\label{Ribetprop}
Let $A^\mathrm{N}$ be the non-CM part of the Jacobian of a modular curve $X_1(n)$ and for a prime $\ell\nmid n$ choose a Frobenius $\sigma\in \Gal(\ol\QQ/\QQ)$ relative to $\ell$. Then, for every abelian subvariety $A\subset A^\mathrm{N}$, with the inclusion defined over $\QQ$, and for every $u$ in $\End_{\ol\QQ} (A)$
we have
\[ 
u^{\sigma^{2}}=\langle\ell\rangle\,u\,\langle\ell^{-1}\rangle.
\]
\end{lem}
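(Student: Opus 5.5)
Following Ribet's strategy in \cite[Theorem~2.3]{Rib75}, we use the Eichler--Shimura relation on $J_1(n)$ to understand how the Frobenius $\sigma$ at $\ell\nmid n$ acts on endomorphisms. The conjugation $u\mapsto u^\sigma$ is a ring automorphism of $\End_{\ol\QQ}(A)$, and so is $u\mapsto \langle \ell\rangle u\langle \ell^{-1}\rangle$. Since $A\subset A^\mathrm{N}$ is defined over $\QQ$ and $\langle\ell\rangle$ is defined over $\QQ$ and preserves every $\QQ$-rational abelian subvariety stable under the Hecke algebra, we may reduce to isotypic pieces. Concretely, up to isogeny $A$ is a product of factors $A_f$ attached to Galois orbits of newforms $f$ (with multiplicity coming from degeneracy maps). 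On each $A_f$ the diamond operator acts through the nebentypus $\varepsilon$, and $T_\ell$ acts through $a_\ell(f)$. So the first step is to reduce the statement to understanding $\End_{\ol\QQ}$ of a product of such $A_f$'s, together with the twisting Galois action.

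The key input is the Eichler--Shimura relation $T_\ell = F + \langle \ell\rangle V$ on the reduction mod $\ell$, where $F$ is Frobenius and $V$ is Verschiebung, with $FV=\ell$. Every endomorphism $u$ of $A$ is defined over some number field, and its reduction $\bar u$ at a prime above $\ell$ satisfies $\bar u^{\sigma} F = F\bar u$. As in Ribet, we write $F^2 - T_\ell F + \ell\langle\ell\rangle=0$, which means $F$ generates, over the Hecke field, a quadratic extension in $\End(\bar A)\otimes\QQ$. We then compare $u^{\sigma^2}$ with $F^2 u F^{-2}$ in the reduction. By injectivity of reduction on endomorphisms, $u^{\sigma^2}=F^2uF^{-2}$ there, and it suffices to show that $F^2 u F^{-2} = \langle\ell\rangle u\langle\ell\rangle^{-1}$. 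Expanding $F^2 = T_\ell F - \ell\langle\ell\rangle$ together with $F^{-1} = V/\ell$ expresses everything in terms of $T_\ell$, $\langle\ell\rangle$ and $F$. The plan is to use the commutation of $u$ with the Hecke algebra, up to Galois-conjugate twists, to cancel the $F$-terms.

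The main obstacle is that a general $u$ need not commute with $T_\ell$ or $\langle\ell\rangle$: endomorphisms of non-CM modular abelian varieties are generated by the Hecke algebra together with "inner twist" operators (Ribet, Momose). For such a twist $\eta_\gamma$ attached to $(\gamma,\chi)$ we have $\eta_\gamma T_\ell = \chi(\ell)^{-1}\gamma(T_\ell)\eta_\gamma$ and $\eta_\gamma^\sigma=\chi(\ell)\eta_\gamma$. I would first try to verify the formula directly on these generators, using the relation $\chi(\ell)^2=\gamma(\varepsilon(\ell))\varepsilon(\ell)^{-1}$ between the twist character and the nebentypus. This relation gives $\eta_\gamma^{\sigma^2}=\chi(\ell)^2\eta_\gamma=\langle\ell\rangle\eta_\gamma\langle\ell\rangle^{-1}$, and the identity holds trivially on Hecke operators defined over $\QQ$. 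Because both sides of the formula are ring automorphisms, it would then extend to all of $\End_{\ol\QQ}(A)\otimes\QQ$. The non-CM hypothesis is essential here. It guarantees that the endomorphism algebra is generated by Hecke operators and inner twists, since the Tate conjecture gives no extra CM endomorphisms and the Galois image is large. The cross terms between different newform orbits vanish or are again inner-twist maps. Multiplicity from degeneracy maps is handled by matrices over these operators, and degeneracy maps are defined over $\QQ$ and commute with $\langle\ell\rangle$.
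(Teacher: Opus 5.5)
Your operative argument (the third paragraph) takes a different route from the paper. It can be made to work, but as written its decisive step is asserted rather than proved.

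\textbf{How the two routes differ.} The paper never describes $\End_{\ol\QQ}(A)$. It shows that $\bT=\prod_{f\in S}\bT_f$ is its own commutant in $\mathbb E=\End(A)\otimes\QQ$; this is the only place the non-CM hypothesis enters. Skolem--Noether then gives $u^\sigma=\alpha^{-1}u\alpha$ with $\alpha\in\bT$. The Fricke automorphism ($W^\sigma=\langle\ell\rangle W$, $Wf_i=\mu_i\ol{f_i}$) yields $\eta_i^2\zeta_i=|\eta_i|^2\ge 0$ for the eigenvalues of $\alpha$ and $\langle\ell\rangle$. Since a power of $\alpha^2\langle\ell\rangle$ is central, positivity forces $\alpha^2\langle\ell\rangle$ itself to be central. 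This handles multiplicities and maps between different Galois orbits uniformly.

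Your route needs more input. First, the Ribet--Momose theorem $\End_{\ol\QQ}(A_f)\otimes\QQ=\bigoplus_\gamma E\eta_\gamma$ for non-CM $f$. Second, a description of $\mathrm{Hom}_{\ol\QQ}(A_f,A_g)$ for distinct orbits. These Hom spaces are nonzero whenever $g$ is conjugate to a twist of $f$, for example the level-$11$ newform and its quadratic twist of level $121$, both inside $J_1(121)$. Such maps are not inner twists, and your sentence that they ``vanish or are again inner-twist maps'' is only an assertion. In exchange, your route explains the formula: $\langle\ell\rangle$-conjugation acts on the $\gamma$-semilinear part as multiplication by $\varepsilon(\ell)/\gamma(\varepsilon(\ell))$.

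\textbf{The gap: the check on generators.} This is where the whole content of the lemma sits, namely the \emph{direction} of the conjugation. Take the standard normalization $\eta_\gamma\circ x=\gamma(x)\circ\eta_\gamma$ for $x\in E$. Then $\langle\ell\rangle\eta_\gamma\langle\ell\rangle^{-1}=\varepsilon(\ell)\gamma(\varepsilon(\ell))^{-1}\eta_\gamma=\chi_\gamma(\ell)^{-2}\eta_\gamma$. Combined with your ${}^\sigma\eta_\gamma=\chi_\gamma(\ell)\eta_\gamma$, this proves $u^{\sigma^2}=\langle\ell\rangle^{-1}u\langle\ell\rangle$, the opposite statement. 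Also, your relation $\eta_\gamma T_\ell=\chi(\ell)^{-1}\gamma(T_\ell)\eta_\gamma$, read literally, says that $\eta_\gamma$ commutes with $T_\ell$. Quoting a Galois action from the literature cannot settle this, because the sign depends on conventions that must match the lemma's normalization of $\langle\ell\rangle$.

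\textbf{A convention-free fix.}
\begin{enumerate}
\item ${}^\sigma\eta_\gamma$ is again $\gamma$-semilinear, so $({}^\sigma\eta_\gamma)\eta_\gamma^{-1}$ commutes with $E$. Since $f$ is non-CM, it lies in $E$, so ${}^\sigma\eta_\gamma=c\,\eta_\gamma$ with $c\in E$.
\item On $V_p(A_f)$ this reads $\rho(\sigma)=c\,\eta_\gamma\rho(\sigma)\eta_\gamma^{-1}$.
\item Take determinants over $E\otimes\QQ_p$, using $\det\rho(\mathrm{Frob}_\ell)=\ell\varepsilon(\ell)$, which comes from $F^2-T_\ell F+\ell\langle\ell\rangle=0$ (Eichler--Shimura in the paper's form). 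This gives $c^2=\varepsilon(\ell)/\gamma(\varepsilon(\ell))$.
\item Hence ${}^{\sigma^2}\eta_\gamma=c^2\eta_\gamma=\langle\ell\rangle\eta_\gamma\langle\ell\rangle^{-1}$.
\end{enumerate}
The same determinant computation, now with $\varepsilon_g(\ell)/\gamma(\varepsilon_f(\ell))$, handles twisting isogenies $A_f\to A_g$, once you know these span the cross terms.

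\textbf{The Eichler--Shimura paragraph.} It does not close on its own. Rewriting $u^{\sigma^2}=F^2uF^{-2}$ does not show that $\langle\ell\rangle^{-1}F^2$ commutes with the reduced endomorphisms. You need extra input: either the structure theorem, or a positivity argument like the paper's use of the Fricke involution.
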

\begin{proof} 
Over $\QQ$, the irreducible factors of the Jacobian of $X_1(n)$ have the form $A_f$ for $f$ newform of level dividing~$n$. Since the non-CM part is defined over~$\QQ$ and the inclusion $A\subset A^\mathrm{N}$ is defined over~$\QQ$ too, we have an isogeny
\[
A  \sim \prod_{f\in S} A_f,
\]
for a certain multiset $S$ of newforms, i.e., a certain $A_f$ might appear multiple times.
Moreover, the above isogeny is defined over~$\QQ$ and is equivariant with respect to the diamond operators.

If $f$ has level $d$ dividing~$n$, then the Hecke algebra $\bT_d$ of $S_2(\Gamma_1(d))$ is a $\QQ$-algebra acting on $A_f$, i.e., there is a map $ \pi \colon \bT_d \to \End_\QQ(A_f) \otimes \QQ$. We can consider the quotient $\bT_f = \bT_d/\ker(\pi)$ which is a field of dimension $\dim_\QQ \bT_f = \dim A_f$ acting faithfully on $A_f$. Furthermore, its action on
\[
\Omega^1(A_f) = \bigoplus_{\tau \in \Gal(\ol\QQ/\QQ)}\!\!\!\! \CC f^\tau \subset S_2(\Gamma_1(d))
\]
is diagonalizable with eigenvectors $\{f^\tau\}$, each one with a different system of eigenvalues, since the $q$-expansion of each $f^\tau$ is determined by the action of the Hecke algebra.

We consider the algebras
\[
\bT  := \prod_{f\in S} \bT_f  \qquad \subset \qquad \mathbb E := \End \left( A  \right) \otimes \QQ  =  \End \left( \prod_{f\in S} A_f  \right) \otimes \QQ,
\]
where the second is the full algebra of the geometric endomorphisms and the first  acts diagonally on $\prod_{f\in S} A_f$. 
To prove our statement we start by showing the following claim.
\begin{Claim}\label{Claim:Tcomm}
$\mathbb T$ is equal to its own commutant in $\mathbb E$.
\end{Claim}
One inclusion is the commutativity of the Hecke algebra $\bT_f$. For the other containment let  $c\in \mathbb E$ be an element commuting with $\mathbb T$. The action of each $\bT$ on $\Omega^1(A)$ is diagonalizable and since each eigenvector in a basis $\mathcal B:=\{f_1, \ldots, f_r\}$ has a different system of eigenvalues, then such a basis is unique up to scaling each $f_i$. Since $c$ commutes with $\bT$, then the action of $c$ on $\Omega^1(A)$ is diagonalizable as well with the same basis of eigenvectors. 
In particular, for each $f\in S$, the morphism $c$ preserves each $ \bigoplus_{\tau \in \Gal(\ol\QQ/\QQ)} \CC \cdot f^\tau = \Omega^1(A_f)$, i.e., $c$  restricts to an endomorphism of each factor $A_f$.
In particular, it is enough to prove that $c$ acts on $A_f$ as an element of $\bT_f$ for each $f$.

Inside $\End(A_f)$ we consider the commutative ring $\mathbb T_f[c]$ which is isomorphic to $\mathbb T_f[x]/\mu(x)$ for some polynomial $\mu \in \mathbb T_f[x]$. Fix $\nu$ an irreducible factor of $\mu$ and let $B$ be the abelian subvariety of $A_f$ defined as the connected component of the identity in $\ker \nu(c)$. 
Since  $\bT_f[c]$ acts as  $\bT_f[x]/\nu(x)$ on $B$, if $B=A_f$ and $\deg \nu = 1$, then $c$ acts as an element of $\mathbb T_f$ as we want. In the remaining cases we derive a contradiction: if we had  $\dim B< \dim A_f$ or $\deg \nu \ge 2$, using that $\dim_\QQ \bT_f = \dim A_f$, we would have a field $L:=\bT_f[x]/\nu(x)$ inside $\End(B) \otimes \QQ$, of dimension $[L:\QQ]>\dim B$, implying that $B$ is of CM-type by \cite[Lemma 2.1]{Rib75} and this is impossible since $B$ is a subvariety of $A^\mathrm{N}$.

Now that we have proved our claim, we return to the main statement. First of all the claim implies that the center $Z$ of $\mathbb E$ is contained in $\bT$. Moreover we restate the result using the center: consider the map $\mathbb E \to \mathbb E$ given by $u \mapsto u^\sigma$, it is an automorphism of the semisimple algebra $\mathbb E$ which fixes $\bT$ (endomorphisms in the Hecke algebra are defined over $\QQ$), hence it fixes the center $Z$, consequently by  Skolem-Noether Theorem \cite[Ch. 8, Section 14, Theorem 4]{NicolasB}, it is an inner automorphism. In other words there exists $\alpha \in \mathbb E$ such that 
\begin{equation}\label{eq:Gal_conjugation}
u^\sigma = \alpha^{-1} u \alpha, \qquad \text{for each } u \in \mathbb E.
\end{equation}
In particular, $\langle\ell^{-1}\rangle u^{\sigma^{2}} \langle\ell\rangle = \langle\ell^{-1}\rangle \alpha^{-2} u \alpha^{2} \langle\ell\rangle$ and the main statement is equivalent to:
\begin{equation}\label{eq:commutation2}
\alpha^2 \langle\ell\rangle \text{ lies in the center of }\mathbb E.
\end{equation}
Since all the Hecke operators are defined over $\QQ$, then $\sigma$ acts trivially on $\mathbb T$, hence $\alpha$ commutes with $\mathbb T$, and by \Cref{Claim:Tcomm} $\alpha $ is an element of $\mathbb T$.
In particular for each eigenform $f_i$ in $\mathcal B$ 
there are complex numbers $\zeta_i$ and $\eta_i$ such that 
\begin{equation}\label{eta_zeta}
\langle \ell \rangle f_i = \zeta_i f_i, \qquad \alpha f_i = \eta_i f_i.
\end{equation}

Define $W$ to be the endomorphism of $\prod_{f \in S } A_f$ that, for each $f \in S$, denoting $d$ its level, acts on $A_f$ as a specific Fricke automorphism of $X_1(d)$. In particular, we choose the Fricke automorphism of $X_1(d)$ that acts on the complex points $\Gamma_1(d) \backslash \ol\HH$ as $\tau \mapsto -1/(d\tau)$: indeed, when $\tau$ varies in $\HH$, the Weil pairing of $P_\tau = \frac 1d $ and $Q_\tau = -\frac \tau d$ in $E_\tau = \CC/(\ZZ+\ZZ\tau)$ is constant, hence choosing $\zeta$ in \eqref{eq_def_W} to be the equal to such a Weil pairing, the Fricke automorphism  sends $(E_\tau, \tfrac 1d)$ to $(E_{d \tau}, -\tau) \cong (E_{-1/(d\tau)}, \frac 1d)$. 
Moreover, by \cite[Theorem 4.6.15]{Miyake}, the action of this automorphism on the Jacobian of $X_1(d)$ leaves $A_f$ stable. By \Cref{prop: Fricke and Galois} applied to the Jacobians of the curves $X_1(d)$, we have
\[
\alpha^{-1}W\alpha =  \langle \ell \rangle W.
\]
Again by \cite[Theorem 4.6.15]{Miyake}, $W$ sends an eigenform to a multiple of its complex conjugate: for each $f_i\in\mathcal B$ there is $\mu_i \in \CC^\times$ such that $Wf_i = \mu_i \ol{f_i}$. Combining with the above equality we get 
\[
\ol{\zeta_i} \mu_i \ol {f_i} = \langle \ell \rangle W  (f_i) = \alpha^{-1}W\alpha (f_i) = \alpha^{-1} (\mu_i\eta_i \ol {f_i}) = \tfrac{\eta_i}{\ol\eta_i}  \cdot \mu_i \ol {f_i},
\]
and hence
\begin{equation} \label{eq:alpha_bar}
\ol\eta_i = \eta_i \zeta_i. 
\end{equation}
To prove \eqref{eq:commutation2}, we prove that that  $Z[\alpha^2\langle\ell\rangle]$ is a trivial ring extension of the center $Z$ of $\mathbb E$. 
As we have seen, because of \Cref{Claim:Tcomm}, 
both $Z$ and  $Z[\alpha^2\langle\ell\rangle]$ are subrings of $\mathbb T$. In particular, since $\mathbb T$ is a finite $\QQ$-algebra without nilpotents, i.e., it is a product of number fields, hence the same remains true for its subrings $Z$ and  $Z[\alpha^2\langle\ell\rangle]$. 
In particular, as for number fields, the extension $Z\subset Z[\alpha^2\langle\ell\rangle]$ is non-trivial if and only if there are two morphisms $\sigma_1, \sigma_2 \colon Z[\alpha^2\langle\ell\rangle] \to \CC$ that agree on $Z$, but that send $\alpha^2\langle\ell\rangle$ to different complex numbers. By contradiction, we suppose that this is the case. 
As for number fields, we can extend these maps to maps $\sigma_1, \sigma_2 \colon \mathbb T \to \CC$. 
Being $\mathbb T$ a product of number fields, there are exactly $r = \dim_\QQ \mathbb T$ distinct morphisms $\mathbb T \to \CC$ and indeed we know $r$ distinct morphisms, namely the maps
$T \mapsto T(f_i)/f_i$, for $f_i$ the eigenforms in $\mathcal B$. In particular these are all the morphisms $\mathbb T \to \CC$ and we can suppose that $\sigma_1(T) = T(f_1)/f_1$ and $\sigma_2(T) = T(f_2)/f_2$. In particular using the eigenvalues $\eta_i$ and $\zeta_i$ in~\eqref{eta_zeta}, we have 
\begin{equation}\label{contra_eta_zeta}
\sigma_1(\alpha^2\langle\ell\rangle) =  \eta_1^2\zeta_1
\qquad\text{and}\qquad  \sigma_2(\alpha^2\langle\ell\rangle) = \eta_2^2\zeta_2.
\end{equation}
Anyway, we have seen before how $\eta_i$ behaves under complex conjugation, in particular, using \eqref{eq:alpha_bar}, we get
\[
\eta_i^2\zeta_i = \eta_i \ol{\eta_i} \in \RR_{\ge 0}.
\]
Moreover, there is a power $\sigma^e$ of $\sigma$ that  acts trivially, in fact there is a finite extension $L/\QQ$ where all the elements of $\End(J^\text{N})$ are defined and therefore the action of $\Gal(\ol \QQ/\QQ)$ on $\End(J^\text{N})$ factors through a finite quotient. We deduce that $\alpha^e$ lies in the center $Z$ and since $\langle\ell\rangle$ has finite order and commutes with~$\alpha$, up to enlarging~$e$, we have $(\alpha^2\langle\ell\rangle)^e \in Z$. In particular, since $\sigma_1$ and $\sigma_2$ agree on $Z$, we have
\[
(\eta_1^2\zeta_1)^e = \sigma_1\left( (\alpha^2\langle\ell\rangle)^e \right) =\sigma_2\left( (\alpha^2\langle\ell\rangle)^e \right) = (\eta_2^2\zeta_2)^e,
\]
and hence
\[
|\eta_2^2\zeta_2| = |\eta_2^2\zeta_2|.
\]
Since $\eta_1^2\zeta_1$ and $\eta_2^2\zeta_2$ are non-negative real numbers, they are equal to their respective absolute values, which together with \Cref{contra_eta_zeta} implies
\[
\sigma_1(\alpha^2\langle\ell\rangle) = \sigma_2(\alpha^2\langle\ell\rangle),  
\]
leading to a contradiction.
\end{proof}

\begin{rem}
The last part of the proof, after \Cref{eq:alpha_bar}, where we prove that $\alpha^2\langle\ell\rangle$ belongs to the center~$Z$, is modeled around the idea that we can write $\alpha^2\langle\ell\rangle = \beta^2$ for $\beta$ such that $Z[\beta]$ is a totally real extension of $Z$ and one could rephrase it in these terms, even though we find the above exposition more precise.
\end{rem}

Now we extend \Cref{Ribetprop} to all modular curves $X_H$, passing through the Jacobian $J(n)$ of the full level modular curve $X(n)$. To describe the action of $\Gal(\ol\QQ/\QQ)$ on $\End(J(n))$, we look more closely at the geometric components of $X(n)$. Denote by $\mu_n^\times$  the set of  primitive $n$-th roots of unity. Then, the connected components of $X(n)_{\ol\QQ}$ are the curves $X_\zeta$, for $\zeta\in\mu_n^\times$, each parameterizing the tuples $(E,P,Q)$ such that $P$ and $Q$ are generators of $E[n]$ with Weil pairing $\langle P,Q\rangle=\zeta$.
Define, moreover 
\[
K = \left\{ \left( \begin{smallmatrix}
*&0\\0&1\end{smallmatrix}\right) \right\} < \GLn
\]
and consider the modular curve
\begin{equation}\label{eq:def_X'n}
X'(n) =X_K,
\end{equation}
which parametrizes tuples $(E,C,Q)$ where $E$ is an elliptic curve, $C$ is a cyclic subgroup of $E$ isomorphic to $\ZZ/n\ZZ$ and $Q$ is a point of order $n$ such that $E[n] =  C \oplus \langle Q \rangle$. 
Then, each curve $X_\zeta$ is isomorphic to $X'(n)$, simply by sending $(E,P,Q)$ to $(E,\diam P, Q)$, and we can bundle such isomorphisms
in the following map:
\begin{align} \label{eq_def_iota}
\iota\colon \bigsqcup_{\zeta\in\mu_n^\times}X'(n)_{\ol\QQ} & \overset{\sim}{\longrightarrow} \bigsqcup_{\zeta\in\mu_n^\times} X_\zeta =  X(n)_{\ol\QQ}, 
\\ \nonumber
\big((E,C,Q),\zeta\big)&\longmapsto (E,P,Q), 
\end{align}
with $P$ the unique point such that
\[
P\in C \qquad\text{and}\qquad \langle P,Q\rangle=\zeta.
\]
The map $\iota$ is not defined over $\QQ$, but only over its algebraic closure $\ol\QQ$. Denoting by $J'(n)$ the Jacobians of $X'(n)$, $\iota$ induces the isomorphism
\[
\iota = \iota_* \colon  
\mathrm{Jac}\left(\bigsqcup_{\zeta\in\mu_n^\times}X'(n)_{\ol\QQ} \right) = \prod_{\zeta \in \mu_n^\times}
J'(n)_{\ol\QQ} 
\overset\sim\lto J(n)_{\ol\QQ}.
\]
For each $a\in (\ZZ/n\ZZ)^\times$, we can define the automorphism
\[
S_a \colon 
\bigsqcup_{\zeta\in\mu_n^\times}X'(n)_{\ol\QQ} \lto
\bigsqcup_{\zeta\in\mu_n^\times}X'(n)_{\ol\QQ}, \qquad ((E,C,Q),\zeta) \longmapsto ((E,C,Q),\zeta^a) .
\]
When $S_a$ acts by pushforward on  $\mathrm{Jac}\left(\bigsqcup_{\zeta\in\mu_n^\times}X'(n)_{\ol\QQ} \right) = \prod_{\zeta \in \mu_n^\times} J'(n)$, we get a ``shift'' on tuples, that is  
\[
S_a ((x_\zeta)_{\zeta \in \mu_n^\times}) = S_{a,*} ((x_\zeta)_{\zeta \in \mu_n^\times}) = (x_{\zeta^{-\ell}})_{\zeta \in \mu_n^\times} .
\]


Using such a shift, we can describe the commutation between $\iota$ and diamond operators or elements in the absolute Galois group of $\QQ$.

\begin{lem}\label{iota_commutation}
Let $\iota$ be the isomorphism defined in \Cref{eq_def_iota} and let $\ell$ be a prime number not diving $n$.
\begin{enumerate}
\item 
Denoting $\diam \ell$ both the corresponding diamond operator on $J(n)$ and the automorphism of $\prod_{\zeta \in \mu_n^\times} J'(n)$ that acts as the diamond operator $\diam\ell$ on each factor $J'(n)$, we have 
$$\iota^{-1} \langle\ell\rangle \,\iota  = S_{\ell^2} \circ \langle\ell\rangle . $$
\item For each $u\in\End_{\ol\QQ}(J(n))$ and each choice of a  Frobenius  $\sigma\in \Gal(\ol\QQ/\QQ)$ at $\ell$, we have
$$
\iota^{-1} u^{\sigma^2} \iota =  S_{\ell^2} \circ \left( \iota^{-1} u\, \iota \right)^{\sigma^2} \circ S_{\ell^{-2}} .
$$
\end{enumerate}
\end{lem}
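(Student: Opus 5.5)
Both parts are checks on moduli, done on points $((E,C,Q),\zeta)$ of $\bigsqcup_\zeta X'(n)_{\ol\QQ}$, and then transported to Jacobians by pushforward (since $\iota$, $S_a$, $\diam\ell$ are isomorphisms of curves, the identities of curve maps give identities of the induced maps on Jacobians). I will first fix the convention for $S_a$ on Jacobians. On curves, $S_a$ sends the component indexed by $\zeta$ to the one indexed by $\zeta^a$. So on tuples the pushforward puts $x_\zeta$ in position $\zeta^a$, i.e. $S_{a,*}((x_\zeta))=(x_{\zeta^{a^{-1}}})$. I will use this curve-level description throughout, to avoid the sign/index slip in the displayed formula.

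\textbf{Part 1.} Take a point $((E,C,Q),\zeta)$ and let $P\in C$ with $\langle P,Q\rangle=\zeta$, so $\iota((E,C,Q),\zeta)=(E,P,Q)$. On $X(n)$ the diamond operator multiplies the level structure by $\ell$, giving $\diam\ell(E,P,Q)=(E,\ell P,\ell Q)$. Here $\ell P\in C$ and $\langle \ell P,\ell Q\rangle=\zeta^{\ell^2}$. Hence $\iota^{-1}$ of this point is $((E,C,\ell Q),\zeta^{\ell^2})$, using $\langle\ell P\rangle=C$.

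On the other side, the diamond operator on $X'(n)=X_K$ sends $(E,C,Q)\mapsto(E,C,\ell Q)$, since multiplying the basis by $\ell$ preserves $C$. Applying $S_{\ell^2}$ then changes the index $\zeta$ to $\zeta^{\ell^2}$. The two results agree, and this gives the identity on curves, hence on Jacobians.

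\textbf{Part 2.} First compute $\iota^\sigma=\sigma\circ\iota\circ\sigma^{-1}$, acting on points. We have $\sigma^{-1}((E,C,Q),\zeta)=((E^{\sigma^{-1}},\sigma^{-1}C,\sigma^{-1}Q),\zeta)$, because the index set $\mu_n^\times$ is part of the scheme $\bigsqcup_\zeta X'(n)$ over $\QQ$, viewed as a constant disjoint union. Then $\iota$ gives $(E^{\sigma^{-1}},P',\sigma^{-1}Q)$ with $\langle P',\sigma^{-1}Q\rangle=\zeta$, and applying $\sigma$ gives $(E,\sigma P',Q)$. By Galois equivariance of the Weil pairing, $\langle\sigma P',Q\rangle=\zeta^\ell$, so
\[
\iota^\sigma=\iota\circ S_\ell .
\]
This is the step to watch: one has to be careful about how $\Gal$ acts on the indexing copy of $\mu_n^\times$. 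Treating the disjoint union as $|\mu_n^\times|$ copies of $X'(n)$ with trivial action on the labels, and $\iota$ as the only non-rational map, is what makes the computation above valid. Consequently $\iota^{\sigma^2}=\iota\circ S_{\ell^2}$, with $S_a$ defined over $\QQ$.

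Now set $v=\iota^{-1}u\iota$. Then $u=\iota v\iota^{-1}$, and applying $\sigma^2$ gives
\[
u^{\sigma^2}=\iota^{\sigma^2}v^{\sigma^2}(\iota^{\sigma^2})^{-1}=\iota S_{\ell^2}v^{\sigma^2}S_{\ell^{-2}}\iota^{-1}.
\]
Conjugating by $\iota$ yields the statement. The same identity on Jacobians follows by functoriality of pushforward and its compatibility with Galois conjugation.

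The main obstacle is purely bookkeeping: fixing the Galois action on the components and the direction of the shift consistently. Once $\iota^\sigma=\iota S_\ell$ is established, both parts are formal.
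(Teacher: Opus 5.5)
Your proof is correct and follows the paper's argument. Part~1 is the same moduli computation. For Part~2 the paper also computes $\iota^\sigma$ via Galois equivariance of the Weil pairing, and reduces the claim to $\iota^{\sigma^2}\circ S_{\ell^{-2}}\circ\iota^{-1}=\mathrm{id}$, which is your identity $\iota^{\sigma^2}=\iota\circ S_{\ell^2}$ rearranged (and you are right that the paper's displayed index formula for $S_{a,*}$ is a slip; it should read $(x_{\zeta^{a^{-1}}})_\zeta$).
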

\begin{proof}
The first part follows observing that
\begin{align*}
(\iota^{-1} \langle\ell\rangle \iota) \big((E,C,Q),\zeta\big) &= (\iota^{-1} \langle\ell\rangle) (E,P,Q)=\iota^{-1} (E,\ell P,\ell Q)=\big((E,C,\ell Q),\zeta^{\ell^2}\big),
\end{align*}
where $P\in C$ is chosen such that $\langle P,Q\rangle=\zeta$.

For the second part, we observe that the statement is equivalent to
\begin{align*}
u^{\sigma^2} & =  \iota \circ S_{\ell^2} \circ \left( \iota^{-1}\right)^{\sigma^2} \circ u^{\sigma^2} \circ \iota^{\sigma^2}  \circ S_{\ell^{-2}}\circ\iota^{-1} 
\end{align*}
or, also,
\[
(\iota^{\sigma^2}  \circ S_{\ell^{-2}}\circ\iota^{-1} )^{-1} \circ u^{\sigma^2} \circ (\iota^{\sigma^2}  \circ S_{\ell^{-2}}\circ\iota^{-1}).
\]
Therefore, it is enough to prove that
\[
\iota^{\sigma^2}  \circ S_{\ell^{-2}}\circ\iota^{-1}=\mathrm{id}_{J(n)}.
\]
We prove the analogous equality on the curve.

We recall that for every non-cuspidal point $x=\big((E,C,Q),\zeta\big)$ the element $\sigma$ acts by
\[
x^\sigma=\big((E^\sigma,\sigma(C),\sigma(Q)),\zeta\big)
\]
and
\[
\iota^\sigma(x^\sigma)=\iota(x)^\sigma=(E^\sigma,\sigma(P),\sigma(Q)), \qquad\text{where }P\in C\text{ and }\langle P,Q\rangle=\zeta.
\]
Hence
\[
\iota^\sigma(x)=\iota(x^{\sigma^{-1}})^\sigma=(E^{\sigma^{-1}}, P',\sigma^{-1}(Q))^\sigma =(E,\sigma(P'),Q),
\]
where $P'\in \sigma^{-1}(C)$ and $\langle P',\sigma^{-1}(Q)\rangle=\zeta$ or, equivalently, $\sigma(P')\in C$ and $\langle \sigma(P'),Q\rangle=\zeta^\ell$.
Therefore $\sigma(P')=\ell P$ for $\langle P\rangle = C$ such that $\langle P,Q\rangle=\zeta$. Therefore, for $P$ and $Q$ such that $\langle P,Q\rangle=\zeta$, we have
\begin{align*}
\left(\iota^{\sigma^2}\circ S_{\ell^{-2}} \circ \iota^{-1}\right)(E,P,Q)&=\left(\iota^{\sigma^2}\circ S_{\ell^{-2}} \right)\big((E,\langle P\rangle,Q),\zeta\big) = \iota^{\sigma^2}\big((E,\langle P\rangle,Q),\zeta^{\ell^{-2}}\big)=(E,P',Q),
\end{align*}
where  $P'$ is chosen in $\langle P\rangle$ such that $\langle P',Q\rangle = (\zeta^{\ell^{-2}})^{\ell^2}=\zeta$. Hence $P'=P$ and we proved that
\[
\iota^{\sigma^2}\circ S_{\ell^{-2}} \circ \iota^{-1}=\mathrm{id}_{X(n)}.
\]
\end{proof}

Now we can extend \Cref{Ribetprop} to all modular curves~$X_H$.
\begin{thm}\label{thm:J_H_commutation}
Let $H<\GLn$ be a subgroup satisfying $\det(H) = (\ZZ/n\ZZ)^\times$  and $-\Id \in H$. Let $J^\mathrm{N}_H$ be the non-CM part of $J_H$ and for a prime $\ell\nmid n$ choose a Frobenius $\sigma\in \Gal(\ol\QQ/\QQ)$ relative to~$\ell$. Then, for every abelian subvariety $A\subset J^\mathrm{N}_H$, with the inclusion defined over~$\QQ$, and for every $u$ in $\End_{\ol\QQ}(A)$ we have
\[ 
u^{\sigma^{2}}=\langle\ell\rangle \,u\,\langle\ell^{-1}\rangle.
\]
\end{thm}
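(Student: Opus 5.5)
We reduce to \Cref{Ribetprop} by passing through the full level Jacobian $J(n)$ and then through $J'(n)$, which is a quotient (in fact a cover) of $X_1(n)$-type curves. The first step is to realize $A$ inside the non-CM part of a product of copies of $J'(n)$. The pullback along $X(n)\to X_H$ gives an isogeny of $J_H$ onto an abelian subvariety of $J(n)$, defined over~$\QQ$ and compatible with the diamond operators (since $\diam\ell$ is scalar, it commutes with $H$ and descends). Since CM-ness is preserved under isogeny, $A$ maps into the non-CM part $J(n)^\mathrm{N}$. Via $\iota^{-1}$ we then obtain $\iota^{-1}(A)\subset\prod_{\zeta}J'(n)^\mathrm{N}_{\ol\QQ}$. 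Now $X'(n)=X_K$ is isomorphic over $\QQ$ to $X_1(n)$ up to conjugating $K$ by a matrix (equivalently, via the Fricke-type map $(E,C,Q)\mapsto(E/C, \text{image of }Q)$ one relates it to a curve of type $X_1(n)$ with a $\Gamma(n)$-type structure). More cleanly, I would use the standard isomorphism $X'(n)\cong X_1(n^2)$-like curve is not needed: $J'(n)$ is isogenous over $\QQ$ to a product of $J_1(d)$-new pieces with $d\mid n^2$, equivariantly for diamond operators, since $\Gamma_K$ is conjugate in $\GL_2^+(\QQ)$ to $\Gamma_1(n^2)\cap\Gamma_0(\ldots)$-type groups by $\smt n001$. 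Thus \Cref{Ribetprop} (applied with level $n^2$, noting $\ell\nmid n^2$) yields the statement for every $\QQ$-stable subvariety of $\prod_\zeta J'(n)^\mathrm{N}$, but only for endomorphisms commuting with the $\QQ$-structure of the product.

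The second step handles the twisting by $\iota$. Take the smallest subvariety $B\subset J(n)^\mathrm{N}$ defined over $\QQ$ containing the image of $A$, and extend $u$ to $\tilde u\in\End(B)$ by a projector (after tensoring with $\QQ$, $A$ splits off $B$ up to isogeny, and the complement can be chosen $\QQ$-rationally and diamond-stable via Poincaré reducibility applied to the action of $\Gal$ and $\diam{\cdot}$). Then $v:=\iota^{-1}\tilde u\,\iota$ is an endomorphism of $\prod_\zeta J'(n)^\mathrm{N}$, and \Cref{Ribetprop} applied factorwise (the product $\prod_\zeta J'(n)$ is a $\QQ$-variety with factors defined over $\QQ$) gives $v^{\sigma^2}=\diam\ell\, v\,\diam{\ell^{-1}}$. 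Combining with \Cref{iota_commutation}(2) and (1):
\[
\iota^{-1}\tilde u^{\sigma^2}\iota=S_{\ell^2}\diam\ell\, v\,\diam{\ell^{-1}}S_{\ell^{-2}}=\iota^{-1}\diam\ell\,\iota\, v\,\iota^{-1}\diam{\ell^{-1}}\iota ,
\]
which is exactly $\tilde u^{\sigma^2}=\diam\ell\tilde u\diam{\ell^{-1}}$. Restricting back to $A$ (which is $\QQ$-stable and $\diam\ell$-stable) gives the claim.

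The main obstacle is the reduction of $J'(n)$ to Jacobians of the form $X_1(d)$ so that \Cref{Ribetprop} applies, with the diamond operators matching (the conjugation by $\smt n001$ must be checked to transport the diamond operator of $X'(n)$ to the correct diamond operator at level $n^2$, and over $\QQ$ rather than $\ol\QQ$). I would first try to show directly that $X'(n)$ is a quotient of $X_1(n^2)$ over $\QQ$, via $(E,P)\mapsto(E/\langle nP\rangle,\ldots)$, making the pullback equivariant; a secondary technical point is justifying that the extension $\tilde u$ and restriction are compatible with Galois twisting, which works because the projectors can be taken in the $\QQ$-rational, diamond-commuting part of $\End(B)\otimes\QQ$.
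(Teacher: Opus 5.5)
Your proposal is essentially the paper's proof. You pull back from $J_H$ to $J(n)$, conjugate by $\iota$ and use \Cref{iota_commutation} to reduce to the matrix entries of $\iota^{-1}u\,\iota$ in $\End(J'(n)^{\mathrm N})$, then pass to $J_1(n^2)$ via $\psi\colon (E,P)\mapsto (E/\langle nP\rangle, E[n]/\langle nP\rangle, P)$ and apply \Cref{Ribetprop}. Your averaged $\QQ$-rational, diamond-commuting projector plays the role of the paper's complement $(\ker\pi_*)^0$, provided you take $B=J(n)^{\mathrm N}$ so that $v$ is an endomorphism of the whole product $\prod_\zeta J'(n)^{\mathrm N}$. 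Your aside that $X'(n)$ is isomorphic to $X_1(n)$ is false ($\Gamma_K=\Gamma(n)$), but you rightly discard it in favour of $\psi$, so it does not affect the argument.
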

\begin{proof}
It is enough to prove the statement for $u$ an endomorphism of the Jacobian $J(n)$ of $X(n)$ since $J_H$ is an isogeny factor of $J(n)$ and there is an inclusion 
\[
\End(J_H) \otimes \QQ \hookrightarrow \End(J(n)) \otimes \QQ 
\]
which preserves the meaning of $\langle\ell\rangle$ and $u \mapsto u^{\sigma^2}$: indeed, denoting $\pi$ the natural projection $X(n) \to X_H$, since $\pi_* \circ \pi^* = [\deg \pi]$, there is an isogeny defined over $\QQ$ and diamond-equivariant $J(n) \sim J_H \times (\ker \pi_*)^0$, where the superscript ${}^0$ denotes taking the connected component of the identity.

If we knew an isogeny defined over $\QQ$ and Hecke equivariant between $J(n)$ and a factor of the Jacobian of a certain $X_1(n')$, then by \Cref{Ribetprop} we would have proven our goal, namely
\begin{equation} \label{eq:commutationJ(n)}
\langle\ell^{-1}\rangle\,u^{\sigma^{2}}\,\langle\ell\rangle=u   \qquad \text{in } \End(J(n)^\text{non-CM}) \otimes \QQ.
\end{equation}
We proceed with the same idea, but we use an isogeny which is not defined over $\QQ$: the map $\iota$ defined in \Cref{eq_def_iota}. 
Since $\iota$ is an isomorphism, our goal \Cref{eq:commutationJ(n)} is equivalent to the following
\begin{equation} \label{eq:commutation_iota}
(\iota^{-1} \langle\ell^{-1}\rangle\,\iota)\, 
(\iota^{-1}u^{\sigma^{2}}\iota)\,  
(\iota^{-1}\langle\ell\rangle\,\iota)= \iota^{-1}u\, \iota \qquad \text{in } \End\left( J'(n) \otimes (\ZZ/n\ZZ)^\times \right).
\end{equation}
Using \Cref{iota_commutation}, and writing $\tilde u$ for $\iota^{-1}u\,\iota$, \Cref{eq:commutation_iota} becomes
\[
 S_{\ell^{-2}} \circ \langle\ell^{-2}\rangle \circ  S_{\ell^2} \circ \tilde u^{\sigma^2} \circ S_{\ell^{-2}} \circ
  S_{\ell^2} \circ \langle\ell\rangle  = \tilde u, 
\]
where, we recall that the diamond operator $\diam\ell$ acts on $ \prod_{\zeta \in \mu_n^\times} J'(n)$ diagonally, that is it acts as tge diamond operator $\diam \ell$ of $J'(n)$ on each factor.
Since shifts and diamond operators commute, the above equality is equivalent to the following 
\begin{equation}\label{commutation1000}
\langle\ell^{-1}\rangle  \circ  \tilde u^{\sigma^2} \circ \langle\ell\rangle  = \tilde u. 
\end{equation}
We can identify the endomorphisms of $\prod_{\zeta \in \mu_n^\times} J'(n)$ with matrices whose entries are in $\End(J'(n))$, e.g., we can write $\tilde u=(u_{ij})$, with $u_{ij}\colon J'(n) \to J'(n)$, for $i,j\in \mu_n^\times$. \Cref{commutation1000} above becomes 
\[
\langle\ell^{-1}\rangle\,u_{ij}^{\sigma^{2}}\,\langle\ell\rangle=u_{ij}.
\]
In other words, we reduced \Cref{eq:commutationJ(n)} to the same statement for $J'(n)$. This can be reduced to \Cref{Ribetprop} (remember we are restricting to the non-CM part), indeed the map $\psi\colon X_1(n^2) \to X'(n)$ sending a (non-cuspidal) point $(E,P)$ to the point $(E/\langle nP \rangle, E[n]/\langle nP \rangle, P)$ is defined over $\QQ$ and is diamond-equivariant.

This can be reduced to \Cref{Ribetprop} as follows. Consider the map $\psi\colon X_1(n^2) \to X'(n)$ sending a non-cuspidal point $(E,P)$ to the point $(E/\langle nP \rangle, E[n]/\langle nP \rangle, P)$. 
Since $\psi_*\circ\psi^*=[\deg\psi]$, then $\psi^* \colon  J'(n)\to J_1(n^2) $ has finite kernel, i.e., it is an  isogeny between $J'(n)$ and an isogeny factor of $J_1(n^2)$. In particular it induces an inclusion $\End(J'(n)) \otimes \QQ \hookrightarrow \End(J_1(n^2)) \otimes \QQ$ which preserves the meaning of $\langle\ell\rangle$ and $u \mapsto u^{\sigma^2}$, since $\psi$ is defined over~$\QQ$ and is diamond-equivariant. Now the statement follows from \Cref{Ribetprop}.
\end{proof}

\Cref{thm:J_H_commutation}, together with \cite{Rib75} and \cite{CMLarioGon}, gives us a bound on the field of definition of the endomorphisms of factors of modular Jacobians. 
By \cite{RSZ22}, the Jacobian $J_H$ is isogenous over~$\QQ$ to a product of simple factors $A_f$ for $f$ varying over the Galois orbits of newforms of $S_2(\Gamma_1(d))$ for $d\mid n^2$. Since the factors $A_f$ are simple, as we explained in the discussion before \Cref{Ribetprop} above, each $A_f$ is either completely non-CM or completely CM. 
By \cite[Section~4]{Shi71CM} and \cite{Shi72}, we know that $A_f$ is CM if and only if $f$ is a modular form associated to a Gr\"ossencharacter of an imaginary quadratic field~$K$.

\begin{cor}\label{cor:field_subabelian_varieties_J_H}
Let $H<\GLn$ be a subgroup satisfying $\det(H) = (\ZZ/n\ZZ)^\times$  and $-\Id \in H$, and let $A \sim \prod_{f\in S} A_f^{m_f} $ be an abelian subvariety of a modular abelian variety $J_H$, with $S$ a set of newforms of level dividing $n^2$ and $m_f\in \ZZ$. 
\begin{enumerate}
\item \label{item:compositum_quad} If all the newforms $f\in S$ have trivial character and do not have CM, then all the $\ol \QQ$-endomorphisms of $A$ are defined over a compositum of quadratic extensions of~$\QQ$.
\item\label{item:field_zetan}  If all the newforms $f\in S$ do not have CM, then all the $\ol \QQ$-endomorphisms of $A$ are defined over a compositum of quadratic extensions of the cyclotomic field $\QQ(\zeta_n)$.
\item \label{item:field_withCM} If all the newforms $f\in S$ have CM by the quadratic imaginary field $K$, then all the $\ol \QQ$-endomorphisms of $A$ are defined over  
the ray class field of $K$ associated to the ideal $n^2 \calO_K$. 
\end{enumerate}
\end{cor}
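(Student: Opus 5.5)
The plan is to deduce each item from \Cref{thm:J_H_commutation}, which controls how Frobenius acts on endomorphisms, and then to identify the field cut out by that Galois action. Since $A$ is an abelian subvariety of $J_H$, we may replace it (up to isogeny, which changes $\End(A)\otimes\QQ$ only by conjugation with an isogeny defined over the same field) by $\prod_{f\in S}A_f^{m_f}$ sitting inside $J_H$ via a $\QQ$-rational inclusion. We may do this because, by \cite{RSZ22}, $J_H$ is isogenous over $\QQ$ to a product of $A_f$'s, and the isotypic parts are defined over $\QQ$. The field of definition of all endomorphisms is the fixed field $F$ of the kernel of the continuous homomorphism $\rho\colon\Gal(\ol\QQ/\QQ)\to\Aut(\End(A)\otimes\QQ)$. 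This kernel is open, so $F/\QQ$ is finite Galois, and by Chebotarev it suffices to understand $\rho$ on Frobenius elements at primes $\ell\nmid n$ of a suitable set of density one.

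For items \ref{item:compositum_quad} and \ref{item:field_zetan}, in the non-CM case $A\subset J_H^{\mathrm N}$, and \Cref{thm:J_H_commutation} gives $\rho(\sigma^2)=\mathrm{conj}(\langle\ell\rangle)$ for every Frobenius $\sigma$ at $\ell\nmid n$. Under trivial character the diamond operators act trivially on each $A_f$, hence on $A$, so $\rho(\sigma^2)=1$ for all Frobenius elements. By Chebotarev, every element of the finite group $G=\Gal(F/\QQ)$ is the image of some Frobenius, so $G$ has exponent dividing $2$. Thus $G$ is elementary abelian $2$-group and $F$ is a compositum of quadratic fields. For item \ref{item:field_zetan}, restrict $\rho$ to $\Gal(\ol\QQ/\QQ(\zeta_n))$: Frobenius elements there lie over primes $\ell\equiv1\bmod n$, so $\langle\ell\rangle=1$ and again $\rho(\sigma^2)=1$. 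The same Chebotarev argument applied to $F(\zeta_n)/\QQ(\zeta_n)$ shows it has exponent $2$, hence is a compositum of quadratic extensions of $\QQ(\zeta_n)$. One subtlety is that $\langle\ell\rangle$ depends only on $\ell\bmod n$, whereas the forms have level dividing $n^2$. This is harmless because the diamond operators in \Cref{thm:J_H_commutation} are those of level $n$ on $J_H$.

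For item \ref{item:field_withCM}, \Cref{thm:J_H_commutation} is unavailable, so I would argue directly with CM theory. Each $A_f$ with CM by $K$ is, over $\ol\QQ$, isogenous to a power of an elliptic curve $E$ with CM by an order of $K$. Hence $A$ is isogenous to $E^m$ and $\End(A)\otimes\QQ\cong M_m(K)$. I would then invoke the results on the field of definition of endomorphisms of CM modular abelian varieties from \cite{CMLarioGon} (together with \cite{Shi71CM,Shi72}). The key input is the Grössencharacter $\psi_f$ attached to $f$, which has conductor dividing the level (so dividing $n^2$) up to the discriminant factor. Decomposing $A_f$ over $K$, the isogenies between the simple factors and $E$ are defined over the field cut out by the values of the Grössencharacters, that is, over the ray class field of conductor $\mathfrak f_\psi$. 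That field is contained in the ray class field of $n^2\calO_K$, since the conductor of $\psi_f$ divides the level.

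I expect the main obstacle to be this CM case, in particular justifying that the conductor of each $\psi_f$ divides $n^2\calO_K$ rather than merely $N(\mathfrak f)|d_K|$ dividing $n^2$. One must also show that $K$ itself lies in the ray class field, which it does by definition, and that all cross-isogenies between different $A_f$ with the same $K$ are defined there too. If the direct conductor bound fails, I would fall back on a Galois argument: a Frobenius at a prime of $K$ that is trivial in the ray class group mod $n^2$ acts on each $A_f$ by an element of $K$ (its $\psi$-value), which is central in $M_m(K)$, so it acts trivially on $\End(A)$.
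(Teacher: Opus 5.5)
Items (1) and (2) are essentially fine, but item (3) has a genuine gap. The step you single out as the main obstacle is not one. From $|d_K|\,N(\mathfrak c_f)\mid n^2$ you get $\mathfrak c_f\mid\mathfrak c_f\ol{\mathfrak c_f}=N(\mathfrak c_f)\calO_K\mid n^2\calO_K$. So $H_{\mathfrak c_f}$ lies in the ray class field $F$ of $n^2\calO_K$, and \cite{CMLarioGon} already gives $A_f\sim E_f^{\dim A_f}$ over $F$.

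What is missing is the step you only list as something ``one must also show'': that the curves $E_f$ for \emph{different} $f\in S$ are isogenous over $F$, i.e.\ that the cross-isogenies between the various $A_f$ are $F$-rational. Nothing in \cite{CMLarioGon} gives this. Without it you have not shown that $\End(A)\otimes\QQ\cong M_m(K)$ is defined over $F$.

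The paper supplies it by comparing Frobenius. Let $\mathcal P$ be a prime of $F$ over $\mathfrak p$ with inertia degree $k$. Then $\mathfrak p^k=(\alpha_{\mathfrak p})$ with $\alpha_{\mathfrak p}\equiv1\bmod n^2\calO_K$, hence $\alpha_{\mathfrak p}\equiv 1$ modulo both $\mathfrak c_f$ and $\ol{\mathfrak c_f}$. So $\psi'_f(\mathfrak p^k)=\alpha_{\mathfrak p}$, and the Frobenius of $E_f\bmod\mathcal P$ has characteristic polynomial $(x-\alpha_{\mathfrak p})(x-\ol{\alpha_{\mathfrak p}})$, independently of $f$. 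Faltings' isogeny theorem then makes all $E_f$ isogenous over $F$. Hence $A\sim E^{\dim A}$ over $F$, with $\End(E)=\calO_K$ defined over $F\supseteq K$.

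Your fallback (Frobenius acts on each $A_f$ by its $\psi$-value, a central element of $M_m(K)$) is the same computation used in place of Faltings. As written, though, it assumes what has to be proved. The characteristic polynomial only says that Frobenius acts on each $V_\ell(E_f)$ as $[\alpha_{\mathfrak p}]$ or as $[\ol{\alpha_{\mathfrak p}}]$. To get one and the same central element on all of $V_\ell(A)$ you need two things:
\begin{enumerate}
\item the $F$-rational decompositions $A_f\sim E_f^{\dim A_f}$;
\item a normalization of the $K$-action that is compatible across all factors and with $\ol\QQ$-isogenies, e.g.\ via invariant differentials, for which the reduced Frobenius generates $\mathfrak p^k$ rather than $\ol{\mathfrak p}^k$.
\end{enumerate}
With these made explicit, the fallback becomes a valid alternative to the Faltings step.

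Two smaller remarks on (1) and (2). Deriving (1) from \Cref{thm:J_H_commutation} instead of quoting \cite[Theorem~2.3]{Rib75}, as the paper does, is fine. You should, however, justify that the level-$n$ diamond $\langle\ell\rangle$ on $J_H$ acts on the $A_f$-isotypic part by $\chi_f(\ell)$; for instance, compare $\mathrm{Fr}_\ell^2-T_\ell\,\mathrm{Fr}_\ell+\ell\langle\ell\rangle=0$ with $\det\rho_f(\mathrm{Frob}_\ell)=\ell\chi_f(\ell)$. In (2), not every Frobenius in $\Gal(\ol\QQ/\QQ(\zeta_n))$ lies over some $\ell\equiv1\bmod n$. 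Either restrict to degree-one primes, or, as the paper does, write $\tau=\sigma^k$ with $k$ the order of $\ell$ modulo $n$, so that $\langle\ell^k\rangle=1$.
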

\begin{proof} 
Part~\ref{item:compositum_quad} is a consequence of \cite[Theorem~2.3]{Rib75}: since the character is trivial, the Hecke algebra acts on each $A_f$ as a totally real field of dimension $\dim A_f$, hence the hypotheses of Ribet's theorem apply.

Part~\ref{item:field_zetan} follows from \Cref{thm:J_H_commutation}. Let $K$ be the minimal field such that all endomorphisms of $A$ are defined over~$K$. Then $K$ is a Galois extension of~$\QQ$, and it is enough to prove that $K(\zeta_n)/\QQ(\zeta_n)$ is a compositum of quadratic extensions, or equivalently that for all elements $\tau \in \Gal(K(\zeta_n)/\QQ(\zeta_n))$ we have $\tau^2 = \Id$. Equivalently, by the definition of $K$, it is enough to prove that, for all endomorphisms $u$ of $A$, we have
\begin{equation}\label{eq:wish_field_of_definition}
u^{\tau^2} = u.
\end{equation}
Let $\mathcal O$ be the ring of integers of~$\QQ(\zeta_n)$. By the Chebotarev density theorem, there exists a prime $\mathcal L$ of~$\mathcal O$, not dividing~$n$, whose Frobenius element is, up to conjugation, $\tau$. Moreover, denoting by $\ell\in\QQ$ the prime below $\cal L$ and by $k$ the inertia degree, we can write $\tau = \sigma^k$, with $\sigma \in \Gal(K(\zeta_n)/\QQ)$ a Frobenius at~$\ell$. Now, since $A$ is isogenous to an abelian subvariety of $J_H^{\textnormal N}$ defined over~$\QQ$, we can apply \Cref{Ribetprop}, which gives
\[
u^{\tau^2} = u^{\sigma^{2k}} = \langle \ell^k\rangle\, u \,\langle \ell^{-k}\rangle.
\]
Now \eqref{eq:wish_field_of_definition} follows from the fact that the inertia degree $k$ of $\mathcal L$ is equal to the multiplicative order of $\ell$ modulo~$n$, hence $\langle \ell^k\rangle = \langle 1 \rangle$ is the identity.

Part~\ref{item:field_withCM} is a consequence of \cite{CMLarioGon}.
Let $\calO_K$ be the maximal order of $K$ and let $F$ be the ray class field of $K$ associated to the ideal $n^2 \calO_K$. 
Each newform $f\in S$ is the modular form associated to a primitive Gr\"os\-sen\-cha\-rac\-ter of~$K$, with a certain conductor $\mathfrak c_f$. By \cite{CMLarioGon}, particularly  Proposition~3.1 and the last four lines of the proof, there is an isogeny $A_f\sim E_f^{\dim A_f}$ which is defined over the ray class field $H_{\mathfrak c_f}$ of~$K$ associated to~$\mathfrak c_f$, and $E_f$ is an elliptic curve, defined over the same field, with the following properties:
\begin{enumerate}
\item $\End(E_f) =\calO_K$;
\item  \label{item : Frob Ef} there is a Gr\"ossencharacter $\psi'$ of conductor either $\mathfrak c_f$ or $\ol{\mathfrak c_f}$  such that, for each prime $
\frak p$ of $K$ and each prime $\mathfrak P$ of $H_{\mathfrak c_f}$ over $\frak p$, the Frobenius of $(E_f \bmod {\frak P})$ has characteristic polynomial
\[
\Big(x-\psi'(\frak p^{k_f})\Big)\Big(x-\ol{\psi'(\frak p^{k_f})}\Big),
\]
where $k_f$ is the inertia degree of $\frak P/ \frak p$. 
\end{enumerate}
Notice that each field $H_{\mathfrak c_f}$ is contained in $F$ since, by \cite[p.~168]{Shi72}, the level of $f$ is $-\mathrm{disc}(K)\mathrm{Norm}(\mathfrak c_f)$, and consequently $\mathfrak c_f$ divides $n^2\calO_K$. Moreover, in the context of \Cref{item : Frob Ef}, if $\mathcal P$ is a prime of $F$ extending~$\mathfrak P$ and $k$ is the inertia degree of~$\mathcal P/ \frak p$, then the ideal ${\frak p}^{k}$ is principal, generated by an element $\alpha_{\frak p}$ that is congruent to $1$ modulo~$n^2\calO_K$. So $\alpha_{\frak p}$ is congruent to $1$ modulo both ${\frak c}_f$ and its conjugate. In particular, we have $\psi'(\mathfrak p^{k}) = \alpha_{\frak p}$, which implies that the Frobenius of $(E_f \bmod {\mathcal P})$ has the same characteristic polynomial of $\alpha_{\frak p}$, which does not depend on~$f$. Now, varying $\cal P$, by Faltings criterion (e.g. \cite[\S 2, Corollary 2,
of]{cornell1986arithmetic}), this implies that all the elliptic curves $E_f$ are isogenous over $F$. In particular, fixing one of them and calling it $E$, we have an isogeny $A\sim E^{ \dim A}$ also defined over~$F$. Since $K$ is contained in~$F$, all the endomorphisms of $E$ are defined over~$F$, which gives our statement.
\end{proof}

\section{Modularity of the automorphism group}
\label{sec:theory_modularity}

The aim of this section is to prove that, under certain hypotheses, all the automorphisms of a modular curve are modular in the following sense.
\begin{defi}\label{def:modaut}
Let $H$ be a subgroup of $\GLn$. An automorphism $u$ of the Riemann surface $\Gamma_H\backslash \overline{\HH}$ is \emph{modular} if it is described by a M\"obius transformation associated to a matrix $m \in \mathrm{PGL}^+_2(\QQ)$, namely for each $\tau \in \ol\HH$ we have $u(\Gamma_H \tau) = \Gamma_H m\tau$.

When $\det(H)= (\ZZ/n\ZZ)^\times$, an automorphism of $X_{H}$ is modular if its action on $X_H(\CC) = \Gamma_H\backslash \overline{\HH}$ is modular.
\end{defi}

We follow a strategy similar to \cite{DLM22} and use the following criterion.

\begin{lem}\label{prop:main}
Let $H<\GLn$ be a subgroup satisfying $\det(H) = (\ZZ/n\ZZ)^\times$  and $-\Id \in H$.
Suppose that there is a prime $\ell \nmid n$  with the following property: 
for every $u\in \Aut(X_H)$ there exists another automorphism $u' \in \Aut(X_H)$ such that the Hecke correspondence $T_\ell$ satisfies
\begin{equation}\label{eq:commuu'}
T_\ell u = u' T_\ell   \quad \text{in } \mathrm{Div}(X_H).  
\end{equation}
Then, every automorphism of $X_H$ is modular.
\end{lem}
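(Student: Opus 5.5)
The plan is to lift $u$ locally to the upper half-plane and use the density of Hecke orbits to force the lift to be a M\"obius transformation with rational coefficients. Fix $\alpha=\smt{1}{0}{0}{\ell}$, so that on $\Gamma_H\backslash\ol\HH$ the correspondence $T_\ell$ sends $\Gamma_H\tau$ to the divisor $\sum_i \Gamma_H\beta_i\tau$, where $\Gamma_H\alpha\Gamma_H=\bigsqcup_i\Gamma_H\beta_i$.

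\textbf{Step 1: iterate the relation.} Applying \eqref{eq:commuu'} repeatedly gives $T_\ell^k u=u_k T_\ell^k$ for suitable $u_k\in\Aut(X_H)$. The same holds for the transpose correspondence, since $T_\ell$ is self-adjoint up to the diamond operator $\diam\ell$, which is an automorphism. Hence $u$ sends the $T_\ell$-orbit of a point (its equivalence class under the relation generated by $T_\ell$) into the $T_\ell$-orbit of its image. Let $\Delta\subset\PGL_2^+(\QQ)$ be the group generated by $\PP(\Gamma_H)$ and $\PP(\alpha)$. On $\Gamma_H\backslash\HH$, the $T_\ell$-orbit of $\Gamma_H\tau$ is exactly $\Gamma_H\backslash\Delta\tau$. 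Moreover $\Delta$ is commensurable with $\PP(\SL_2(\ZZ[1/\ell]))$, so it is dense in $\PGL_2^+(\RR)$.

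\textbf{Step 2: local equivariance.} Pick a point $\tau_0\in\HH$ whose image in $X_H$ is not a cusp, not an elliptic point, and not mapped by $u$ to such a point. Let $\tilde u$ be a local holomorphic lift of $u$ on a small disc $D\ni\tau_0$. By Step 1, for each $\delta\in\Delta$ and each $\tau\in D$ with $\delta\tau\in D$, there is $\delta'\in\Delta$ with $\tilde u(\delta\tau)=\delta'\tilde u(\tau)$. Since $\Delta$ is countable, a Baire category argument followed by the identity theorem produces, on a smaller open set, a fixed $\delta'$ with $\tilde u\circ\delta=\delta'\circ\tilde u$. Shrinking appropriately, I then aim for the following: for $\delta$ in $\Delta\cap\mathcal U$, with $\mathcal U$ a small neighbourhood of the identity in $\PGL_2^+(\RR)$, the assignment $\delta\mapsto\delta'=\tilde u\delta\tilde u^{-1}$ is well defined. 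It is multiplicative where defined, and it is continuous, because a M\"obius transformation is determined by its values on an open set and depends continuously on them.

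\textbf{Step 3: the lift is M\"obius.} By density, $\delta\mapsto\delta'$ extends to a continuous local automorphism $\phi$ of $\PGL_2^+(\RR)$ near the identity, and $\tilde u\, g=\phi(g)\,\tilde u$ holds on an open set for all $g$ near $1$. A continuous local homomorphism of Lie groups is smooth, so $\phi$ is given by an automorphism of $\mathfrak{sl}_2(\RR)$, i.e.\ conjugation by some $h\in\PGL_2(\RR)$. Then $h^{-1}\tilde u$ commutes with a neighbourhood of the identity acting on $\HH$. Since the stabilizers of distinct points of $\HH$ are distinct, $h^{-1}\tilde u$ must be the identity; holomorphy forces $h\in\PGL_2^+(\RR)$. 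So $\tilde u=h$ on an open set.

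\textbf{Step 4: conclusion.} By Step 2, $h\Delta h^{-1}$ and $\Delta$ share a finite-index piece, so $h$ lies in the commensurator of $\Gamma_H$, which is $\PGL_2^+(\QQ)$. The holomorphic maps $u$ and $\Gamma_H\tau\mapsto\Gamma_H h\tau$ agree on an open set, hence everywhere on $\Gamma_H\backslash\ol\HH$. Thus $u$ is modular in the sense of \Cref{def:modaut}.

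I expect Step 2 to be the main obstacle: making the Baire argument rigorous and getting a consistent choice of $\delta'$ that is multiplicative, while keeping track of the $\Gamma_H$-ambiguity of lifts and avoiding the cusps and elliptic points.
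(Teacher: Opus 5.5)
Your route is genuinely different from the paper's and can be made to work, but Step~1 does not follow as written. Iterating \eqref{eq:commuu'} only says that $u_k$ carries $T_\ell^k(x)$ onto $T_\ell^k(u(x))$. It gives no control on where $u$ itself sends a Hecke neighbour $y$ of $x$, and $u'$ is in general not $u$: already for Fricke-type automorphisms they differ by a diamond operator, cf.\ \Cref{prop: Fricke and Galois}.

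The repair is to transpose the hypothesis. As correspondences, $T_\ell u=u'T_\ell$ gives $T_\ell^{t}u'=u\,T_\ell^{t}$, hence $T_\ell^{t}T_\ell\, u=u\,T_\ell^{t}T_\ell$. So $u$ itself preserves the classes of the equivalence relation generated by $T_\ell^{t}T_\ell$. On $\HH$ these classes are the orbits of $\Delta^0=\langle \Gamma_H,\alpha^{-1}\Gamma_H\alpha\rangle$. Here $\alpha$ must be an integral matrix of determinant $\ell$ whose reduction mod~$n$ is compatible with $H$; $\smt100\ell$ does not work for arbitrary $H$.

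$\Delta^0$ is still dense. Since $\ell\nmid n$, $\Gamma_H$ surjects onto $\SL_2(\FF_\ell)$. So $\Gamma_H$ and $\alpha^{-1}\Gamma_H\alpha$ act transitively on the neighbours of two adjacent vertices of the Bruhat--Tits tree. Hence $\Delta^0$ is edge-transitive, and its index in $\mathrm{PSL}_2(\ZZ[1/\ell])$ is at most $[\mathrm{PSL}_2(\ZZ):\PP\Gamma_H]$.

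With $\Delta^0$ in place of $\Delta$, Steps~2 and~3 go through as you describe. For $\delta$ near $1$ the set $D\cap\delta^{-1}D$ is connected, so Baire plus the identity theorem give a single $\delta'$, unique because $\tilde u$ is open. The $\Gamma_H$-ambiguity of lifts is absorbed into $\Delta^0$.

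In Step~4, your reasoning (``$h\Delta h^{-1}$ and $\Delta$ share a finite-index piece'') is not what Step~2 gives; Step~2 only gives $h(\Delta\cap\mathcal U)h^{-1}\subseteq\Delta$ locally. The cleaner order is to apply the identity theorem first: $u\circ\pi=\pi\circ h$ on all of $\HH$, with $\pi\colon\HH\to X_H(\CC)$. So $h$ normalizes $\PP\Gamma_H$, and it is rational by \cite[Remark~5.2]{DLM22}.

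Compared with the paper: the paper works pointwise with the multiplicity pattern of the single divisor $T_\ell(P)$ to show that $u$ preserves the cusps and the elliptic points. It does this via \cite[Theorem~2.4]{DLM22} for $\ell\ge 5$, with separate arguments for $\ell=2,3$: growth of the support of $T_\ell^k$ for cusps, and a case analysis of CM orders for $\ell=2$. It then uses \cite[Proposition~3.1]{Dos16} to lift $u$ to an element of $\PGL_2^+(\RR)$ normalizing $\Gamma_H$.

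Your argument uses only supports of divisors, is uniform in $\ell$, and needs no information about cusps, elliptic or CM points. The M\"obius nature of the lift comes from rigidity: density of an $S$-arithmetic group plus the structure of local automorphisms of $\mathrm{PSL}_2(\RR)$. The cost is heavier machinery (Baire category, regularity of continuous local homomorphisms, the tree or strong approximation). The paper's route is more elementary given the cited results, and yields as a by-product that every automorphism permutes the cusps and the elliptic points.
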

\begin{proof}

In order to prove that every automorphism $u$ is modular, it is enough to show that $u$ preserves the set of cusps and the set of branch points of the map $\HH \to Y_H(\CC)$.
Indeed, when this occurs, applying \cite[Proposition~3.1]{Dos16}, we deduce that $u$ is induced by an automorphism $v \colon \HH \to \HH$. We know that $\Aut(\HH)= \tx{PGL}_2^+(\RR)$, hence $v$ is a M\"obius transformation given by a matrix $m \in \tx{PGL}_2^+(\RR)$. Since $v$ factors through the quotient, $m$ belongs to the normalizer of $\Gamma_H$ in $\tx{PGL}_2^+(\RR)$ and actually, by \cite[Remark~5.2]{DLM22}, to the normalizer of $\Gamma_H$ in $\tx{PGL}_2^+(\QQ)$. Then, by definition, $u$ is modular.

We now prove that all automorphisms preserve the set of cusps and the set of elliptic points, which is the set of branch points of the map $\HH \to Y_H(\CC)$. 

When $\ell \ge 5$, \cite[Theorem~2.4]{DLM22} allows us to distinguish those points, depending on their image under $T_\ell$: if $P$ is a cusp, then $T_\ell (P)$ contains a point with multiplicity at least~$4$; if $P$ is an elliptic point of order~$3$, then $T_\ell (P)$ contains a point with multiplicity~$3$; if $P$ is an elliptic point of order~$2$, then $T_\ell (P)$ contains two distinct points with multiplicity~$2$; otherwise, $T_\ell (P)$ has a different ``shape'', i.e., $T_\ell (P)$ is either made of $\ell+1$ points of multiplicity~$1$, or $\ell-1$ points of multiplicity~$1$ and one of multiplicity~$2$. 
Moreover, \Cref{eq:commuu'} tells us that, for $u$ an automorphism of $X_H$, the divisors $T_\ell (P)$ and $T_{\ell}u(P)$ have the same ``shape'', implying that $u$ preserves both the set of cusps and the set of elliptic points.

We now treat the case $\ell=3$. We start by proving that the set of cusps is stable under~$u$. Notice that iterating~\eqref{eq:commuu'}, we deduce that, for every power $T_\ell^k$ of $T_\ell$, we have that $T^k_\ell(P)$ and $T^k_\ell u(P)$ have the same shape. The set of cusps is stable under Hecke operators, hence, if $C$ is a cusp, the cardinality of the support of $T^k_\ell(C)$ is bounded when $k$ tends to $\infty$. If, by contradiction, we have $u(C) = (E,\phi)$ is a non-cuspidal point, then also the support of $T_\ell^k(E,\phi)$ is bounded when $k$ tends to $\infty$, and \emph{a fortiori}, the support of $T_\ell^k(E)$ in $X(1)$.  But this is absurd since the cardinality of the support of $T_\ell^k(E)$ tends to infinity with~$k$: if $E$ is non-CM, its support contains all the points of $T_{\ell^k}(E)$, which are distinct; moreover, it also contains the points of $T_{\ell^{k-2}}(E)$, $T_{\ell^{k-4}}(E)$, $\ldots$, thus, if $E$ is a CM point, for $k$ sufficiently large the support of $T_\ell^k(E)$ contains points with CM by arbitrarily many distinct imaginary quadratic orders.
It remains to show that the elliptic points are stable under $u$. Since $u$ and $u^{-1}$ send cusps to cusps, we know at least that $u$ sends branch points to non-cuspidal points. Then, by Propositions~$2.7$ and $2.9$ in \cite{DLM22}, branch points $P$ of order~$2$ are characterized by the fact that $T_3(P)=2P_1+2P_2$ and branch points $P$ of order~$3$ are characterized by the fact that $T_3(P)=3P_1+P_2$, which gives what we want.

Finally, we treat the case $\ell=2$. The same argument used in the case $\ell=3$ proves that the set of cusps is stable. Propositions~$2.7$ and $2.9$ in \cite{DLM22} imply that the elliptic points $P$ of order $3$ are stable, since they are characterized by the fact that $T_2(P)=3P'$. 
It remains to show that the elliptic points of order $2$ are characterized. We use the notation $E_\tau$ for the elliptic curve $\CC/\ZZ+\ZZ\tau$. Then an elliptic point of order $2$ is of the form $(E_i,\phi)$ and Proposition~2.9 of \cite{DLM22} tells us that $T_2(E_i,\phi)=2P_1+P_2$ with $P_1\neq P_2$, and in particular, 
\[
T_2(E_i,\phi) = 2(E_{2i}, \phi')+ (E_{i}, \phi'').
\]
Since the cusps are preserved by automorphisms, then $u(E_i,\phi)$ is not a cusp, i.e., we can write $u(E_i,\phi) = (E,\psi)$ for an elliptic curve~$E$, and  Equation~\eqref{eq:commuu'} gives
\begin{equation}\label{eq:commuEi}
2 u'(E_{2i}, \phi')+ u'(E_{i}, \phi'')  = u'(T_2(E_i,\phi)) = T_2 (u(E_i,\phi)) = T_2(E,\psi)= 2(E',\psi') + (E'',\psi'').
\end{equation}
Now, as in \cite[Lemma~2.6]{DLM22}, this means that $E'$ has a cyclic endomorphism of degree $4$, which implies that $\End(E')$ is one of the following quadratic orders:
\begin{equation}\label{eq:orders}
\ZZ[2i], \quad \ZZ[\sqrt{-3}],  \quad  \ZZ[(1+\sqrt{-7})/2],  \quad \ZZ[(1+\sqrt{-15})/2].
\end{equation}
In each case, we can list the possible curves $E'$ with that endomorphism ring. Moreover, the fact that $(E',\psi')$ appears twice in $T_2(E,\psi)$ implies that there are two $2$-isogenies $\alpha, \beta\colon E \to E'$ having different kernels and with the additional property that 
\begin{equation}\label{eq<tech}
(E', \alpha\circ \psi) = (E', \beta\circ \psi) = (E', \psi').    
\end{equation}
For each $E'$, this also gives finitely many possibilities for~$E$.

First suppose that $\End(E')$ is one of last two orders in~\eqref{eq:orders}. Then, by direct computation, all possible pairs $(E,E')$ satisfy $\End(E) = \End(E')$. We also can see that $T_2(E', \psi')$ contains a double point: the $2$-isogenies $\alpha^\vee, \beta^\vee\colon E' \to E$ have different kernels (here we use that $\Aut(E) = \Aut(E') = \{\pm 1\}$), hence $T_2(E', \psi')$ contains both $(E,\alpha^\vee \circ\psi')$ and $(E,\beta^\vee \circ\psi')$; these two points are the same point in $X_H$ since, identifying $\alpha$, $\alpha^\vee$, $\beta$, and $\beta^\vee$ with their actions on the $n$-th torsion subgroup, we can rephrase \eqref{eq<tech} as follows
\begin{equation}\label{eq<tech2}
\psi^{-1}\circ \alpha^{-1} \circ \beta \circ \psi = \tfrac 12 \psi^{-1}\circ \alpha^{\vee} \circ \beta \circ \psi  \quad \in H,
\end{equation}
which implies
\[
(\psi')^{-1} \circ (\beta^{\vee})^{-1} \circ \alpha^\vee \circ\psi' = \psi^{-1} \circ \alpha^{-1} \circ (\beta^{\vee})^{-1} \circ \alpha^\vee \circ \alpha\circ\psi = \psi^{-1} \circ \alpha^{-1} \circ (\beta/2) \circ 2 \circ\psi  = \psi^{-1}\circ \alpha^{-1} \circ \beta \circ \psi \quad \in H.
\]
Now \Cref{eq:commuEi} implies $u'(E_{2i}, \phi') = (E', \psi')$, but this is absurd since $T_2(E', \psi')$ has a double point, while $T_2(E_{2i}, \phi')$ does not (indeed, $T_2(E_{2i})$ already over $X(1)$ does not have double points). 

Now suppose $\End(E') = \ZZ[\sqrt{-3}]$. Then, there is only one choice for $E'$, up to isomorphism, that is $E' = E_{\sqrt{-3}}$, and consequently $E = E_\rho$, with $\rho =(1-\sqrt{-3})/2$ (it is the only curve, up to isomorphism, which has two $2$-isogenies $\alpha, \beta\colon E \to E'$ having different kernels). To understand $T_2(E,\psi)$, we look at the cyclic subgroups $C_1$, $C_2$, and  $C_3$ of $E[2]$: up to reordering, we have $C_2 = \rho C_1$ (here $\rho$ is identified with the corresponding automorphism of $E$) and $C_3 = \rho^2C_2$ and $E/C_i \cong E'$ for $i\in \{1,2,3\}$, so that we can write 
\[
T_2(E,\psi) = (E', \psi_1) + (E', \psi_2) + (E', \psi_3).
\]
with the properties $\psi_2 = \rho\circ \psi_1$ and $\psi_3 = \rho^2\circ \psi_1$ (here $\rho$  actually denotes $\frac{1}{2}(2\rho)|_{E'[n]}$, with $2\rho$ identified with the corresponding endomorphism of $E'$, and analogously for $\rho^2$). If two of the points  in $T_2(E,\psi)$ are equal, using that $E'$ has trivial automorphisms, we get that either $\psi_1^{-1}\circ \rho\circ \psi_1$ or  $\psi_1^{-1}\circ \rho^2\circ \psi_1$ lie in~$H$. But since one is the square of the other, if one lies in~$H$, the other lies in $H$ as well. This implies that $T_2(E,\psi) = T_2(u(E_i,\phi))$ actually has the shape $3P_1$ which contradicts \Cref{eq:commuEi}, since $T_2(E_i, \phi)$ is not made of one point with multiplicity~$3$.

Having excluded all the other possibilities in~\eqref{eq:orders}, we deduce that $\End(E') = \ZZ[2i]$. This implies that $E' \cong E_{2i}$ and $E = E_{i}$. But now, the fact that $T_2(E,\psi) = T_2(E_i,\psi)$ has a double point implies, by \cite[Proposition~2.9]{DLM22}, that $(E,\psi)$ is an elliptic point (of order $2$) as desired.
\end{proof}

For future reference, we recall the following result. 
\begin{lem}\label{lem:injaut}
Let $\ell$ be a prime number. 
If $A/\QQ$ be an abelian variety with good reduction at $\ell$, then the reduction map 
$\End(A_{\overline\QQ}) \to \End(A_{\overline{\FF}_\ell})$ is injective.
Moreover, if $X/\QQ$ is a curve genus $g\ge 2$ and with good reduction at $\ell$, then the reduction map 
$\Aut(X_{\overline\QQ}) \to \Aut(X_{\overline{\FF}_\ell})$ is injective.
\end{lem}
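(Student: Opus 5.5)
For the first assertion I would use the N\'eron model and the Tate module. Since $A$ has good reduction at $\ell$, there is an abelian scheme $\mathcal A$ over $\ZZ_{(\ell)}$ with generic fiber $A$. Fix a finite extension $K/\QQ$ over which a given endomorphism $u$ of $A_{\ol\QQ}$ is defined, and a prime $\lambda$ of $K$ above $\ell$. By the N\'eron mapping property over the localization $\calO_{K,\lambda}$, $u$ extends uniquely to an endomorphism of $\mathcal A_{\calO_{K,\lambda}}$. Reducing this extension modulo $\lambda$ defines the reduction map. The map is a ring homomorphism, so it suffices to show that its kernel is trivial.

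Next I pick an auxiliary prime $p\neq\ell$. The $p^k$-torsion group scheme $\mathcal A[p^k]$ is finite \'etale over $\calO_{K,\lambda}$, so the specialization map $A[p^k](\ol K)\to \mathcal A[p^k](\ol\FF_\ell)$ is a bijection compatible with endomorphisms. Therefore $T_p(A)\cong T_p(\mathcal A_{\ol\FF_\ell})$ equivariantly. If the reduction $\bar u$ is zero, then $u$ acts trivially on $T_p(A)$. Since $\End(A)\to\End(T_pA)$ is injective (the standard fact that $\End(A)\otimes\ZZ_p\hookrightarrow \End(T_pA)$), it follows that $u=0$.

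For the second assertion, let $X$ have good reduction at $\ell$, with smooth proper model $\mathcal X$, and let $J$ be its Jacobian. Then $J$ has good reduction at $\ell$, and the reduction of $J$ is the Jacobian of $\mathcal X_{\FF_\ell}$. An automorphism $\alpha$ of $X_{\ol\QQ}$ extends to the smooth model over some $\calO_{K,\lambda}$. This is where I expect the main care to be needed: the standard way to get the extension is that the minimal regular model of a curve of genus $g\ge2$ is unique, and here it is $\mathcal X$ itself because $\mathcal X$ is smooth. Hence automorphisms extend, and we obtain a reduction map $\alpha\mapsto\bar\alpha$ compatible with the induced maps $\alpha_*$ on Jacobians.

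The induced action gives a homomorphism $\Aut(X_{\ol\QQ})\to\Aut(J_{\ol\QQ})$, which is injective because $g\ge2$. The main reason is that an automorphism acting trivially on $H^0(\Omega^1)$ is the identity; one sees this for non-hyperelliptic curves via the canonical embedding, and for hyperelliptic curves by checking the involution separately. Now suppose $\bar\alpha=\mathrm{id}$. Then $\bar\alpha_*=\mathrm{id}$ on the reduced Jacobian, so by the first part $\alpha_*=\mathrm{id}$ on $J$, and hence $\alpha=\mathrm{id}$. Alternatively, one could avoid the Jacobian and argue with the $p$-torsion of $J$ directly, as Serre's lemma shows that automorphisms acting trivially on $J[p]$ for $p\ge3$ are trivial; I would quote whichever reference is convenient, since both routes are standard.
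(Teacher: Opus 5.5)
Your proposal is correct, and its overall structure matches the paper's. For endomorphisms you compare $T_pA$ with $T_pA_{\ol\FF_\ell}$ for an auxiliary prime $p\neq\ell$. For automorphisms you show that $\Aut(X_{\ol\QQ})\to\Aut(J_{\ol\QQ})\to\Aut(J_{\ol\FF_\ell})$ is injective, using the first part for the second arrow.

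The differences are in the supporting steps. You make explicit why the reduction maps exist and are compatible: the N\'eron mapping property over $\calO_{K,\lambda}$, the extension of automorphisms to the smooth model as the unique minimal regular model for $g\ge 2$, and the fact that the reduction of $\alpha_*$ is $(\bar\alpha)_*$. The paper leaves all of this implicit.

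More substantively, the injectivity of $\Aut(X_{\ol\QQ})\to\Aut(J_{\ol\QQ})$ is proved differently. You derive it from the faithful action on $H^0(\Omega^1)$ via the canonical map. The paper argues with divisors: if $u\neq\mathrm{id}$ acted trivially on $J$, then $u(x)+y-x-u(y)$ would be principal for all $x,y$. Fixing $x$ with $u(x)\neq x$ then gives degree-$2$ maps whose deck involutions send $x$ to $u(y)$ for infinitely many values of $u(y)$, contradicting the finiteness of $\Aut(X)$.

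The paper's route uses only Abel--Jacobi and Hurwitz finiteness and needs no case distinction. Yours is the textbook argument and proves the stronger fact that $\Aut(X)$ acts faithfully on differentials. You should, however, spell out the hyperelliptic case. An automorphism $\alpha$ acting trivially on $H^0(\Omega^1)$ satisfies $\phi\circ\alpha=\phi$ for the canonical map $\phi$. It therefore preserves the fibres of the double cover $X\to\PP^1$, so it is either the identity or the hyperelliptic involution. The involution acts as $-1$ on $H^0(\Omega^1)$ because the quotient is $\PP^1$, which has no nonzero differentials.

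Finally, your Serre's-lemma alternative does not remove the Jacobian step. It still needs $\Aut(X_{\ol\QQ})\hookrightarrow\Aut(J_{\ol\QQ})$ and only replaces the Tate-module argument of the first part.
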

\begin{proof}
For the first part, it is enough to choose a prime $p\neq\ell$ and look at the  action of endomorphisms  on the Tate module $T_p A = \varprojlim A[p^n](\ol\QQ)$, which is isomorphic to $T_p A_{\ol \FF_\ell}$ since the reduction $A[p^n](\ol\QQ) \to A[p^n](\ol\FF_\ell)$ is an isomorphism: an endomorphisms of $A$ is zero on $A_{\ol \FF_\ell}$ if and only if it acts as zero on $T_p A_{\ol \FF_\ell}$, which implies that it acts as zero on $T_p A$, hence that the endomorphism is zero. 

For the second part, denoting by $J$ the Jacobian of $X$, it is enough to prove that the composition 
\[
\Aut(X_{\overline\QQ}) \to \Aut(X_{\overline{\FF}_\ell})  \to \Aut( J_{\overline{\FF}_\ell})
\]
is injective or, equivalently, that it is injective the  composition 
\[
\Aut(X_{\overline\QQ}) \to \Aut( J_{\overline{\QQ}})  \to \Aut( J_{\overline{\FF}_\ell}),
\]
where an automorphism of the curve induces an automorphism of the Jacobian, as algebraic group, by pushforward of divisors.
The map $\Aut( J_{\overline{\QQ}})  \to \Aut( J_{\overline{\FF}_\ell})$ is injective by the first part of the statement, hence it is enough to show that the map $\Aut(X_{\overline\QQ}) \to \Aut( J_{\overline{\QQ}})$ is injective. 
By contradiction, let $u \ne \mathrm{id}$ be an automorphism in the kernel. Then, for each pair of points $x$ and $y$ on the curve, the point $u_*(x-y) = u(x)-u(y)$ in the Jacobian is equal to $x-y$, implying that the divisor $u(x) +y -x -u(y)$ is the divisor of a rational function $f_{x,y}\colon X \to \PP^1$. Fix $x$ such that $x\neq u(x)$. Then, for all but finitely many choices of  $y$, the function $f_{x,y}$ has degree $2$. By looking  at  the action of function fields, we deduce that $f_{x,y}$ is Galois, hence there exists an automorphism $\iota_{x,y} \colon X \to X$ that permutes nontrivially the fibers of $f_{x,y}$, and therefore sends $x$ to $u(y)$. In particular, when $y$ varies, also $u(y)$ takes infintely many values, implying that there are infinitely many different automorphisms of $X$, contradicting Hurwitz's result in \cite{Hur} on the finiteness of $\Aut(X)$.
\end{proof}

Our main strategy, in order to prove that the automorphisms on a certain modular curves are all modular, is to prove that \Cref{eq:commuu'} is satisfied. We proceed in two steps. 
The following result is the first step, showing that, under suitable hypothesis, \eqref{eq:commuu'} holds in the Jacobian of the curve.

\begin{lem}\label{th:commrel}
Let $H<\GLn$ be a subgroup satisfying $\det(H) = (\ZZ/n\ZZ)^\times$  and $-\Id \in H$. 
Suppose that $J_H = \mathrm{Jac}(X_H)$ has ``small CM part" $J_H^\mathrm{C}$, namely suppose that
\[
\dim J_H^\mathrm{C} \le \tfrac 12 \dim J_H - 1.
\]
Then, for each prime $\ell \nmid n$, for each $u \in \Aut(X_H)$, and for each $\sigma \in \mathrm{Gal}(\overline{\QQ}/\QQ)$ Frobenius element at $\ell$, we have
\begin{equation}\label{eq:main_commutation}
T_\ell u=u^{\sigma} T_\ell \quad \text{in } \End_{\ol{\QQ}}(J_H),
\end{equation}
where $T_\ell$ is the usual $\ell$-th Hecke operator on $J_H$, and $u$ is identified with $u_*\colon J_H\to J_H$.
\end{lem}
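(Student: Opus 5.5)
The plan is to reduce modulo~$\ell$, compare both sides of \eqref{eq:main_commutation} through the Eichler--Shimura congruence, and lift the resulting equality back to characteristic~$0$ with \Cref{lem:injaut}. Set $D := T_\ell u - u^\sigma T_\ell \in \End_{\ol\QQ}(J_H)$. Since $\ell \nmid n$, the curve $X_H$ and $J_H$ have good reduction at~$\ell$, so by \Cref{lem:injaut} it suffices to show that the reduction $\tilde D \in \End(J_{H,\ol\FF_\ell})$ is zero.

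\textbf{Step 1: Eichler--Shimura.} Modulo~$\ell$, as correspondences on $X_{H,\ol\FF_\ell}$, we have $T_\ell = F + \diam\ell F^t$, where $F$ is the $\ell$-power Frobenius and $F^t$ is its transpose (Verschiebung on $J_H$). The reduction of $u^\sigma$ is the Frobenius twist of $\tilde u$, so $F\tilde u = \widetilde{u^\sigma} F$. The Frobenius terms therefore cancel in $\tilde D$, leaving
\[
\tilde D = \diam\ell F^t \tilde u - \widetilde{u^\sigma}\diam\ell F^t .
\]
Transposing, and using that $\tilde u$ and the diamond operators are automorphisms (so their transposes are their inverses), we get $\tilde D^t = (a - b)\circ F$ as correspondences. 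Here $a$ and $b$ are the automorphisms of $X_{H,\ol\FF_\ell}$ given by $a = \tilde u^{-1}\diam{\ell}^{-1}$ and $b = \diam\ell^{-1}(\widetilde{u^\sigma})^{-1}$, up to the Frobenius twists I will track carefully. So it suffices to show that $a = b$.

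\textbf{Step 2: the non-CM part.} The subvarieties $J_H^{\mathrm N}$ and $J_H^{\CM}$ are canonical, so they are preserved by $u$, $u^\sigma$ and $T_\ell$, and $D$ is block diagonal. On $J_H^{\mathrm N}$, apply \Cref{thm:J_H_commutation} with $A = J_H^{\mathrm N}$, rewritten as $u^\sigma\diam\ell = \diam\ell u^{\sigma^{-1}}$. Combined with $F^t \tilde u^{(\ell)} = \tilde u F^t$, this shows that the expression from Step~1 vanishes on the reduction of $J_H^{\mathrm N}$. Hence $D|_{J_H^{\mathrm N}} = 0$ by \Cref{lem:injaut}. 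Since $F$ is an isogeny, it follows that $(a-b)_* = a_* - b_*$ kills an abelian subvariety of dimension $\dim J_H^{\mathrm N}$. Consequently the image of $a_*-b_*$ has dimension at most $\dim J_H^{\CM} \le \tfrac12 g - 1$, where $g = \dim J_H$.

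\textbf{Step 3: the main obstacle.} It remains to rule out $a\ne b$. Suppose $c := b^{-1}a \ne \mathrm{id}$. The image of $1 - c_*$ is the complement of the $c$-invariant part, so its dimension is $g - g(X_H/\diam c)$. Using Riemann--Hurwitz, I will bound the genus of a quotient by a nontrivial cyclic group: the worst case is an involution, which gives $g(X_H/\diam c) \le (g+1)/2$. Therefore
\[
\dim\operatorname{im}(a_* - b_*) \ge \tfrac{g-1}{2} > \tfrac g2 - 1,
\]
contradicting Step~2. This is exactly where the hypothesis on the CM part is used, and I expect it to be the delicate point. Two things must be checked: the Riemann--Hurwitz bound in characteristic~$\ell$ (reducing to a prime-order subgroup of $\diam c$, and handling wild ramification if $\ell$ divides the order, where the bound only improves), and the bookkeeping of Frobenius twists in $a$ and $b$. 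With $a = b$ we get $\tilde D = 0$, and \Cref{lem:injaut} gives \eqref{eq:main_commutation}.
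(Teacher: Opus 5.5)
Your proof is correct. It uses the same three ingredients as the paper:
\begin{itemize}
\item the Eichler--Shimura congruence, to turn \eqref{eq:main_commutation} into a relation between two automorphisms;
\item \Cref{thm:J_H_commutation} on $J_H^{\mathrm N}$;
\item the rigidity fact that two automorphisms of a genus-$g$ curve whose actions agree on an abelian subvariety of the Jacobian of dimension $>\tfrac{g+1}{2}$ must coincide, proved by Riemann--Hurwitz for the quotient by $\langle c\rangle$.
\end{itemize}

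The difference is where the rigidity is applied. Right after Eichler--Shimura, the paper notes that it suffices to prove $u^{\sigma^2}=\langle\ell\rangle u\langle\ell^{-1}\rangle$. This is an identity between two automorphisms of $X_{H,\ol\QQ}$, so the paper invokes the argument of \cite[Lemma~4.1]{DLM22} in characteristic~$0$. There, agreement on $J_H^{\mathrm N}$ is literally \Cref{thm:J_H_commutation}. You instead run the rigidity argument on $X_{H,\ol\FF_\ell}$ for $a$ and $b$. Once the twists are tracked, $a=b$ is (a Frobenius twist of) the reduction modulo~$\ell$ of that same identity, and it is equivalent to it by \Cref{lem:injaut}.

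The paper's ordering avoids any characteristic-$\ell$ geometry and cites an existing lemma. Yours is self-contained, and the points you flag do go through:
\begin{itemize}
\item The quotient map $\pi\colon X_{H,\ol\FF_\ell}\to X_{H,\ol\FF_\ell}/\langle c\rangle$ is separable. So Hurwitz with an effective different gives $g(X_{H,\ol\FF_\ell}/\langle c\rangle)\le \frac{g-1}{m}+1$ for $c$ of order $m\ge 2$. No reduction to prime order and no special treatment of wild ramification is needed.
\item The identity component of the $\langle c\rangle$-invariants in the Jacobian equals $\pi^*$ of the Jacobian of the quotient in any characteristic. This follows from $\pi^*\pi_*=\sum_{h\in\langle c\rangle}h_*$ and $\pi_*\pi^*=[m]$.
\end{itemize}

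One piece of bookkeeping should be fixed: the transposition in Step~1 is unnecessary and weakens the link to Step~2. Since $F^t\tilde u=\widetilde{u^{\sigma^{-1}}}F^t$, you directly have
\[
\tilde D=\bigl(\langle\ell\rangle\widetilde{u^{\sigma^{-1}}}-\widetilde{u^{\sigma}}\langle\ell\rangle\bigr)\circ F^t .
\]
Here $F^t$ (Verschiebung) is an isogeny mapping the reduction of $J_H^{\mathrm N}$ onto itself. So the vanishing from Step~2 passes immediately to the difference of these two automorphisms, and Step~3 can be run with them.

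With the transpose as you wrote it, $\tilde D^t$ acts on the Jacobian as the Rosati adjoint of $\tilde D$. So the remark that $F$ is an isogeny does not by itself transfer the vanishing on the reduction of $J_H^{\mathrm N}$ to $(a-b)_*$. You would need an extra step such as $\dim\operatorname{im}\tilde D^t=\dim\operatorname{im}\tilde D\le\dim J_H^{\CM}$, which is exactly the bound Step~3 uses.
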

\begin{proof}
By \Cref{lem:injaut}, it is enough to prove that \Cref{eq:main_commutation} holds modulo~$\ell$. 
By Eichler--Shimura (see \cite[Theorem~7.9 and Corollary~7.10]{Shi71intro} or \cite[Theorem~3.7]{CL23}), and by the properties of the Frobenius morphism $\mathrm{Frob_\ell}$, we have 
\[ 
T_\ell u \equiv (\mathrm{Frob}_\ell)_* u+ \langle\ell\rangle (\mathrm{Frob_\ell})^* u \equiv u^\sigma (\mathrm{Frob_\ell})_*+ \langle\ell\rangle u^{\sigma^{-1}} (\mathrm{Frob_\ell})^{*} \pmod\ell.
\]
Moreover
\[
u^\sigma T_\ell\equiv u^\sigma (\mathrm{Frob_\ell})_*+\ u^\sigma\ \langle\ell\rangle(\mathrm{Frob_\ell})^{*} \pmod\ell.
\]
In particular, if $\langle\ell\rangle\ u^{\sigma^{-1}} \equiv  u^\sigma\ \langle\ell\rangle\pmod\ell$, then $u^\sigma T_\ell \equiv  T_\ell u\pmod\ell$, which implies our proposition. 
Therefore, it is enough to prove that
\begin{equation} \label{eq:commutation}
u^{\sigma^{2}}=\langle\ell\rangle\, u\,\langle\ell^{-1}\rangle.
\end{equation}
Actually, it is enough to prove the above relation for $u$ an endomorphism of a piece of $J_H$ big enough: the arguments in the proof \cite[Lemma~4.1]{DLM22} imply that two automorphisms $u_1, u_2$ of a curve are equal if and only if their actions on the Jacobian $J_H \sim A_1 \times A_2$ agree on $A_1$ and $\dim A_1 > \dim A_2+1$. 
In particular, we have the decomposition $J_H \sim J_H^\CM \times J_H^\text{N}$ into CM part and non-CM part. The assumption $\dim J_H^\mathrm{C} \le \tfrac 12 \dim J_H - 1$, implies exactly that $J_H^\text{N} > \dim J_H^\mathrm{C}+1$, hence it suffices to prove that \eqref{eq:commutation} holds for every endomorphism $u$ of $J_H^\text{N}$. Notice that the decomposition $J_H \sim J_H^\CM \times J_H^\text{N}$ is defined over $\QQ$, hence both pieces are stable under $\sigma$. Moreover, since $J_H^\CM$ and $J_H^\text{N}$ have non-isomorphic simple pieces, then both $J_H^\CM$ and $J_H^\text{N}$ are stable under every endomorphism of $J_H$. In particular, they are stable under $\langle \ell \rangle$ and it is enough to prove the following
\begin{equation*} \label{eq:commutation1.5}
u^{\sigma^{2}}=\langle\ell\rangle\, u \,\langle\ell^{-1}\rangle  \qquad \text{in } \End(J_H^\text{N}) \otimes \QQ.
\end{equation*}
But this is true by \Cref{thm:J_H_commutation}.
\end{proof}

We now prove that, if we keep the assumption of \Cref{th:commrel}, and we add an hypothesis on the gonality of the curve, Equation \eqref{eq:commuu'}  holds, and consequently all the automorphisms of $X_H$ are modular.

\begin{thm}\label{cor:CShighgen} \label{th:highgenus}
Let $H<\GLn$ be a subgroup satisfying $\det(H) = (\ZZ/n\ZZ)^\times$  and $-\Id \in H$, 
and let $J_H^\mathrm{C}$ be the CM part of $J_H = \mathrm{Jac}(X_H)$. If the following two conditions are satisfied
\begin{enumerate}
\item \label{condition_CM} $\dim J_H^\mathrm{C} \le \tfrac 12 \dim J_H - 1$,
\item  \label{condition_gonality} $\tx{gonality}_{\ol{\QQ}}(X_H)\ge 2\ell+2$ for a prime $\ell\nmid n$,  
\end{enumerate}
then every automorphism of $X_H$ is modular.
\end{thm}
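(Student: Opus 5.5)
The plan is to verify the hypothesis of \Cref{prop:main} for the prime $\ell$ given in condition~\ref{condition_gonality}. Concretely, for each $u\in\Aut(X_H)$ we take $u' := u^\sigma$, with $\sigma$ a Frobenius element at $\ell$, and show that $T_\ell u = u' T_\ell$ holds as correspondences, i.e.\ in $\mathrm{Div}(X_H)$ pointwise, not just on the Jacobian. Condition~\ref{condition_CM} is exactly the hypothesis of \Cref{th:commrel}, so we already know
\[
T_\ell u = u^\sigma T_\ell \quad \text{in } \End_{\ol\QQ}(J_H).
\]
The remaining work is to upgrade this from the Jacobian to divisors using the gonality bound.

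Fix a point $P\in X_H(\ol\QQ)$ and consider the two effective divisors $D_1 := T_\ell(u(P))$ and $D_2 := u^\sigma(T_\ell(P))$. Both have degree $\ell+1$, since $T_\ell$ sends a point to an effective divisor of degree $\ell+1$ and automorphisms preserve degree. Fix a base point $P_0$ and apply the identity on $J_H$ to the degree-zero divisor $P-P_0$. This gives
\[
D_1 - D_2 \sim T_\ell(u(P_0)) - u^\sigma(T_\ell(P_0)),
\]
a divisor class independent of $P$. We then choose $P_0$ so that the right-hand side is trivial, or, more robustly, avoid choosing and argue as follows. The divisor $E(P) := D_1 - D_2$ of degree $0$ has constant linear equivalence class $c$ as $P$ varies. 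For two points $P,Q$ we get $T_\ell u(P) + u^\sigma T_\ell(Q) \sim T_\ell u(Q) + u^\sigma T_\ell(P)$. Both sides are effective of degree $2\ell+2$, so if they were distinct divisors they would define a non-constant rational function of degree at most $2\ell+2$. Hence the relevant inequality is strict $< 2\ell+2$ only if cancellation occurs, and we need to handle the boundary case.

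The cleaner route is the following. If $D_1\neq D_2$ as divisors for some $P$, then $D_1 - D_2 = \mathrm{div}(f)$ only when $c=0$, and a non-constant such $f$ has degree $\le \ell+1 < 2\ell+2$, contradicting the gonality. So we first want $c=0$. To get this, note that $c$ is represented by $E(P)$ for every $P$. Take $P$ to be a fixed point of some suitable automorphism, or simply compare $E(P)$ and $E(Q)$: the rational function with divisor $E(P)-E(Q)$ has degree at most $2\ell+2$. It is therefore constant unless its degree is exactly $2\ell+2$, and the hypothesis is $\ge 2\ell+2$, so equality is not excluded. This is the delicate point. I expect the main obstacle to be precisely this borderline count, which I would resolve by a continuity argument. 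As $Q\to P$, the divisors $E(P)$ and $E(Q)$ share support, so for $Q$ close to $P$ (for instance $Q=P$ perturbed, or choosing $Q$ so that $D_1(Q)$ and $D_1(P)$ share a point) cancellation drops the degree below $2\ell+2$. Alternatively, as in \cite{DLM22}, I would pick $Q$ with $u(Q)$ or $Q$ among the points of some $T_\ell$-image so that one common point cancels. This forces $E(P)=E(Q)$ as divisors for all $P,Q$.

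Once $E(P)$ is a fixed divisor $E$ independent of $P$, write $E = E^+ - E^-$. Then $D_1(P)\ge E^+$ for all $P$. But the supports of $T_\ell u(P)$ move with $P$: two generic points have disjoint Hecke images. So $E^+=0$, and symmetrically $E^-=0$. Hence $D_1=D_2$ for every $P$, which is \eqref{eq:commuu'} with $u'=u^\sigma\in\Aut(X_H)$. Finally, \Cref{prop:main} yields that every automorphism of $X_H$ is modular.
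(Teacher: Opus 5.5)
Your architecture matches the paper's: take $u'=u^\sigma$, get $T_\ell u=u^\sigma T_\ell$ on $J_H$ from \Cref{th:commrel}, upgrade this to an identity of divisors using the gonality, and conclude with \Cref{prop:main}. You also isolate the real difficulty correctly. The class $c$ of $E(P)=T_\ell u(P)-u^\sigma T_\ell(P)$ is only known to be independent of $P$. Comparing $E(P)$ with $E(Q)$ for arbitrary $Q$ gives a function of degree up to $2\ell+2$, which the hypothesis does not exclude.

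Neither of your resolutions of this borderline case works. The continuity argument is false: for $Q$ close to $P$, the divisors $T_\ell(Q)$ and $T_\ell(P)$ are close but, for generic $P$, disjoint. You use exactly this disjointness in your last paragraph, so nothing cancels. Choosing $Q$ so that $T_\ell(Q)$ and $T_\ell(P)$ share a point $R_1$ does work for that pair: $E(P)-E(Q)$ becomes a difference of effective divisors of degree at most $2\ell+1$, hence $E(P)=E(Q)$. But such $Q$ lie in the countable Hecke orbit of $P$, so this never gives $E(P)=E(Q)$ for all $P,Q$. Your final step, with a fixed divisor $E^+$ contained in $T_\ell u(P)$ for every $P$, rests on this unjustified global constancy.

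What is missing is how to extract information from a single identity $E(P)=E(Q)$. Work with $P,Q\in S=X_H(\CC)\setminus X_H(\ol\QQ)$. There $T_\ell(P)=R_1+\dots+R_{\ell+1}$ consists of distinct points, and $T_\ell(P)$, $T_\ell(Q)$ share only $R_1$. Consider
$T_\ell u(P)+u^\sigma T_\ell(Q)=T_\ell u(Q)+u^\sigma T_\ell(P)$.
The points $u^\sigma(R_2),\dots,u^\sigma(R_{\ell+1})$ cannot be absorbed by $u^\sigma T_\ell(Q)$, so they lie in $T_\ell u(P)$.

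The paper uses this to see that the supports of $T_\ell u(P)$ and $u^\sigma T_\ell(P)$ meet for every $P\in S$. It then concludes because $T_\ell u$ and $u^\sigma T_\ell$ are irreducible one-dimensional correspondences on $X_H\times X_H$ meeting in infinitely many points, hence equal.

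Alternatively, you can complete your own ``cleaner route''. Run the same argument with a second point $Q'\in S$ whose Hecke image meets $T_\ell(P)$ exactly in $R_2$. This also puts $u^\sigma(R_1)$ into $T_\ell u(P)$, so $T_\ell u(P)=u^\sigma T_\ell(P)$ for this one $P$. Hence $c=0$. Then for every point the principal divisor $E(P)$ has effective parts of degree at most $\ell+1$, which is below the gonality, so it vanishes. This variant avoids the irreducibility of the correspondences. Either way, the proof needs the structure of Hecke images of generic points, which your proposal never uses.
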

\begin{proof}
Notice that  $u^\sigma T_\ell$ and $T_\ell u$ give correspondences in $(X_{H} \times X_{H})_{\ol\QQ}$ which are irreducible and $1$-dimensional. Such correspondences either coincide or have finite intersection. 
If they coincide, then \Cref{prop:main} implies our statement. 
Then, in order to exclude the finite intersection case, it is enough to prove that the supports of the divisors $T_\ell u(P)$ and $u^\sigma T_\ell(P)$ are not disjoint for infinitely many points $P$ on~$X_H$.
We prove that this happens for all the points $P$ in $S:=X_H(\CC)-X_H(\ol \QQ)$. Notice that $S$ is stable under automorphisms (which are defined over $\ol\QQ$) and Hecke operators, and all the points in $S$ are non-cuspidal and non-CM. Moreover, for each point $P\in S$, the support of $T_\ell(P)$ contains $\ell+1$ distinct points, and given two distinct points $P_1, P_2 \in S$, the supports of divisors $T_\ell(P_1)$ and $T_\ell(P_2)$  have at most one point in common.

We now fix $P\in S$ and consider $T_\ell (P) = R_1+\ldots + R_{\ell+1}$ 
Notice that there are $\ell+1$ distinct points (in $S$) whose image under $T_\ell$ contains $R_1$, and one of them is~$P$. Pick $Q$ to be another of these points and consider the divisor 
\[
D:= (T_\ell u - u^{\sigma} T_\ell)(P-Q)= T_\ell u(P)+ u^\sigma T_\ell (Q) - u^\sigma T_\ell (P) - T_\ell u(Q).
\]
Such a divisor is principal since we are assuming that $T_\ell u - u^{\sigma} T_\ell$ is zero on the Jacobian. Moreover, since the points $u^\sigma (R_1)$ cancels out in $u^\sigma T_\ell (Q) - u^\sigma T_\ell (P)$, then we can write $D = D^+- D^-$ with $D^+$ and $D^-$ effective divisors of degree $d\le 2\ell +1$. In particular, $d$ is strictly smaller than the gonality of $X_H$, which implies that the divisor $D$ is necessarily the zero divisor. Now consider the points $u^\sigma(R_2),\ldots, u^\sigma(R_{\ell+1})$, which appear in $u^\sigma T_\ell(P)$, and need to cancel out in $D$: they are not contained in the support $u^\sigma T_\ell(Q)$ (indeed the supports of $T_\ell(P)$ and $T_\ell(Q)$ already share $R_{1}$, hence they cannot further intersect), and consequently they are contained in the support of $T_\ell(u(P))$. Ww have, thus proved that $T_\ell(u(P))$ and  $u^\sigma T_\ell(P)$ intersect, as desired.
\end{proof}

We use the above result to prove our main result.

\begin{thm}\label{thm: prime level}
For each prime $p>311$ and each proper 
subgroup $H <  \GL_2(\ZZ/p\ZZ)$ satisfying $\det(H) = (\ZZ/p\ZZ)^\times$ and $-\Id \in H$, all the automorphisms of $X_H$ are modular.
\end{thm}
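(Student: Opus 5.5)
We apply \Cref{cor:CShighgen} with a small auxiliary prime $\ell\nmid p$, say $\ell=2$ (or $\ell=3$ if more convenient). We must check that $\dim J_H^\CM\le \tfrac12\dim J_H-1$ and that $\mathrm{gonality}_{\ol\QQ}(X_H)\ge 2\ell+2=6$. The plan is to reduce to the maximal subgroups of $\GL_2(\ZZ/p\ZZ)$ with surjective determinant.

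By Dickson's classification, every proper $H$ with $\det H=(\ZZ/p\ZZ)^\times$ is contained in one of the following: a Borel subgroup, the normalizer of a split Cartan subgroup, the normalizer of a non-split Cartan subgroup, or an exceptional group whose image in $\PGL_2$ is $A_4$, $S_4$ or $A_5$. There is one subtlety: a subgroup of a Borel subgroup need not be contained in it in a way compatible with a single step. Still, $X_H$ always covers $X_K$ for some maximal $K\supseteq H$.

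\textbf{Gonality.} Gonality does not increase under pushforward along a dominant map (Abramovich–Poonen type inequality $\mathrm{gon}(X_H)\ge \mathrm{gon}(X_K)$). So it suffices to bound the gonality of $X_0(p)$, $X_\s^+(p)$, $X_\ns^+(p)$ and the exceptional curves. For these we use Abramovich's linear lower bound $\mathrm{gon}_\CC(X_\Gamma)\ge \tfrac{7}{800}[\PSL_2(\ZZ):\bar\Gamma]$ (or Kim–Sarnak's $\tfrac{975}{4096}$ improvement). The index of these groups in $\PSL_2(\ZZ)$ is at least $p+1$; in the exceptional case it is about $p(p^2-1)/120$. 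For $p>311$ this comfortably gives gonality at least $6$, so we only need to check the smallest case, $X_0(p)$ with index $p+1$.

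\textbf{CM part.} This is the main obstacle: we must bound $\dim J_H^\CM$ uniformly for every $H$, not just the maximal ones, since the CM condition does not pass to covers in an obvious way. My approach is as follows. By \cite{RSZ22}, $J_H$ is isogenous to a product of $A_f$ with $f$ newforms of level dividing $p^2$, and the CM ones come from Hecke characters of imaginary quadratic fields ramified only at $p$. The multiplicity of each $A_f$ in $J_H$ is the dimension of $H$-invariants in the associated representation of $\GL_2(\FF_p)$ (via the local representation $\pi_{f,p}$). For CM forms, $\pi_{f,p}$ is either a principal series or a dihedral supercuspidal representation induced from a character of $\FF_{p^2}^\times$ or $\FF_p^\times\times\FF_p^\times$. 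The point is that a CM form of level $p$ or $p^2$ requires $K=\QQ(\sqrt{-p})$, with $p\equiv 3\bmod 4$ and level $p^2$, or an induction-type structure. Counting these forms via class numbers gives $\dim J_H^\CM=O\bigl(\sqrt p\log p\cdot \#\{\text{characters}\}\bigr)$, which I would bound as $O(p^{3/2}\log p)$ times the proportion of invariants.

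Meanwhile the genus satisfies $g(X_H)\ge c\,[\PSL_2(\ZZ):\bar\Gamma_H]-O(\cdot)$ by the Riemann–Hurwitz formula. I would compare multiplicities representation by representation: the $H$-invariants in a dihedral supercuspidal of dimension $p-1$ are at most the $H$-invariants of the whole representation. I would then show directly, case by case over the maximal classes, that the CM contribution is at most roughly half the total. For the Borel case with $H\supseteq$ unipotent this is Ribet-type counting on $X_1(p)$; for the Cartan cases one uses the known decompositions ($J_\ns^+(p)\sim J_0^+(p^2)^{\mathrm{new}}$, etc.). Explicit class-number bounds and genus formulas should yield the threshold $p>311$, and I expect the constant to come from this step.
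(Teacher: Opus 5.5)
There is a genuine gap in the Borel case. You reduce the gonality estimate to the maximal overgroups and claim that, for $p>311$, Abramovich's bound with Kim--Sarnak ``comfortably'' gives gonality at least $6$, even for $X_0(p)$. It does not. For $X_0(p)$ the index in $\mathrm{PSL}_2(\ZZ)$ is $p+1$, and $\frac{975}{4096\cdot 24}(p+1)\approx 0.0099\,(p+1)$. This is about $3.1$ at $p=313$ and exceeds $5$ only for $p\ge 509$; with $\frac{7}{800}$ it is even weaker. So for $311<p<509$ you have not verified condition~2 of \Cref{cor:CShighgen} for $X_0(p)$, neither with $\ell=2$ nor with $\ell=3$. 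Since you bound every Borel-type $X_H$ only through $X_0(p)$, none of these curves is covered as written.

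For the same reason, the constant $311$ does not come from the CM estimate, as you expect. The paper never applies \Cref{cor:CShighgen} when $p$ divides $|H|$. In that case $H$ lies in a Borel subgroup and contains $U=\{\smt{1}{*}{0}{1}\}$, so $\Gamma_1(p)\subseteq\Gamma_H\subseteq\Gamma_0(p)$. The whole case is then settled by Kamienny's description of the automorphisms of these intermediate curves, which holds exactly for $p>311$.

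Your route can be repaired without Kamienny, but you must say how:
\begin{itemize}
\item Apply Abramovich to $X_H$ itself, not to the maximal overgroup. For $U\subseteq H\subsetneq B$ the index is at least $2(p+1)$, which gives gonality at least $7$.
\item The CM part of $J_H$ is then zero: $J_H$ is a factor of $J_1(p)$, and there are no weight-$2$ CM newforms of level $p$. So the criterion applies with $\ell=2$.
\item The curve $X_0(p)$ itself must be handled separately, either by the known results on $\Aut(X_0(N))$ (Ogg, Kenku--Momose) or by Kamienny as in the paper.
\end{itemize}

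For $p\nmid|H|$ your plan is essentially the paper's, but the CM step is much simpler than your representation-by-representation count of invariants. Over $\CC$, $X_H$ is a quotient of $\Gamma(p)\backslash\ol\HH$, which is covered by $X_1(p^2)$. Hence
\[
\dim J_H^\CM\le\dim J_1(p^2)^\CM,
\]
and the right-hand side is at most the number of Gr\"ossencharacters of $\QQ(\sqrt{-p})$ with conductor $(\sqrt{-p})$. That number is at most $\frac{p-1}{2}\,\#\mathrm{Cl}(\QQ(\sqrt{-p}))=O(p^{3/2}\log p)$, uniformly in $H$. So the monotonicity you said does not pass to covers ``in an obvious way'' is exactly what passes, and it is all you need.

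Such $H$ lie in a Cartan normalizer or an exceptional group, so the index is at least $p(p-1)/2$ and the genus is at least about $p(p-10)/24$, which dominates the CM bound. The paper concludes directly for $p>2^{14}$, and by a prime-by-prime class-number check for $311<p<2^{14}$, citing known results for the Cartan normalizers themselves.
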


\begin{proof}
	If $H$ has order divisible by $p$, then, by \cite[Section~2.4, Proposition~15]{Ser72}, the group $H$ is contained in a Borel subgroup. In other words, up to conjugation, we can suppose that $H$ is contained in the subgroup $B < \GL_2(\ZZ/p\ZZ)$ of upper  
	triangular matrices that have order $p(p-1)^2$. Moreover, since $H$ has order divisible by~$p$, then it contains the $p$-Sylow of $B$, namely the subgroup $U:=\{ \smt1a01 : a\in \ZZ/p\ZZ\}$. 
	We deduce that $\Gamma_1(p)<\Gamma_H< \Gamma_0(p)$, and consequently $X_H(\CC) = \ol\HH/\Gamma_H$ is one of the curve studied by Kamienny in \cite{KAMIENNY_2003}. In this case, then, the result follows from Kamienny's characterization of the automorphisms of these curves, exactly when $p>311$.
	
	We can now assume that $H$ has order coprime to $p$. 
	We check that  the conditions in \Cref{cor:CShighgen} are satisfied, implying our statement. 
	For the latter condition we bound the gonality of $X_H$ with \cite{Abr96} and \cite[discussion after Proposition~2, Appendix~2]{Kim03} which give 
	\begin{equation}\label{eq: gonality estimate Abramovich}    
		\mathrm{gonality}_{\ol{\QQ}}(X_H)=\mathrm{gonality}_\CC(X_H)\ge\frac{975}{4096}\cdot\frac{[\GL_2(\ZZ/p\ZZ) :\pm  H]}{24}.
	\end{equation}
	Since we are assuming that $H$, and consequently $\pm H$, has order coprime to~$p$, then, by \cite[Section~2.6]{Ser72}, $\pm H$ is contained either in the normalizer of a split Cartan subgroup, or in the normalizer of a non-split Cartan subgroup, or in an exceptional subgroup of $\GL_2(\ZZ/p\ZZ)$. In particular, 
	we have
	\begin{equation}\label{eq:technical estimate index}
		[\GL_2(\ZZ/p\ZZ) :\pm  H] \ge \min\left\{ \frac{p(p+1)}{2}, \frac{p(p-1)}{2}, \frac{p(p+1)(p-1)}{60}   \right\} \ge \frac{p(p-1)}{2},
	\end{equation}
	which implies, together with \eqref{eq: gonality estimate Abramovich}, that the gonality of $X_H$ is at least $8$. Hence, the second condition in \Cref{cor:CShighgen} is satisfied with $\ell=3$.
	
	We now check the first condition in \Cref{cor:CShighgen}. To bound the dimension of the CM part $J_H^\CM$ of $J_H$ we recall that $J_{H,\CC}$ is an isogeny factor of  $J_1(p^2)_\CC$: indeed  $X_H(\CC) = \ol\HH/\Gamma_H$ is a quotient of $X'(p)(\CC) = \ol\HH/\Gamma(p)$ with $X'(p)$ the modular curve associated to $\{ \smt *001\} < \GL_2(\ZZ/p\ZZ)$, which is covered by $X_1(p^2)$ through the map 
	$\psi\colon X_1(p^2) \to X'(p)$ sending a (non-cuspidal) point $(E,P)$ to the point $(E/\langle pP \rangle, E[p]/\langle pP \rangle, P)$. 
	In particular, we have 
	\begin{equation}\label{eq: from H to J_1p2}
		\dim\left(J_H^\CM\right) \le \dim\left(J_1(p^2)^\CM\right).    
	\end{equation}
	To bound $\dim(J_1(p^2)^\CM)$ we use \cite[Theorem~6.6.6]{DS05}, i.e., the isogeny decomposition $J_1(p^2) \sim \prod_{[f]} A_f^{m_f}$, where $f$, considered up to the action of Galois, varies among the newforms of level~$p$ and~$p^2$, with $m_f=2$ in the former case and $m_f=1$ in the latter. Each  $A_f$ is simple over~$\QQ$, hence it is either a CM abelian variety or it does not contain any CM factor over~$\ol \QQ$. By \cite[Proposition~1.6]{Shi72} and \cite{Shi71CM}, the first case happens exactly when $f$ is CM, that is associated to a primitive Gr\"ossencharacter of a quadratic imaginary field~$K$, say with conductor an ideal $\mathfrak m \subset\calO_K$. Then, by \cite[p.~168]{Shi72}, the level of $f$ is equal to $-\mathrm{disc}(K)\mathrm{Norm}(\mathfrak m)$.
	In our situation, $\mathrm{disc}(K)$ has to divide $p^2$, which implies that $K = \QQ(\sqrt{-p})$ for a prime $p\equiv 3\bmod 4$, and that $\mathrm{Norm}(\mathfrak m)\in \{1,p\}$, that is  $\mathfrak m \in \{ (1), (\sqrt {-p})\}$. 
	We only have to consider $\frak m = (\sqrt {-p})$ since there is no  Gr\"ossenncharacter with conductor  $\mathfrak m = (1)$: indeed we are  interested in Gr\"ossenncharacters $\lambda$ of weight $\nu=1$ (so that $f$ has weight $2$) which, by \cite[p.~1]{Shi71CM}, satisfy 
	$$ 
	\lambda((\alpha))=\alpha, \quad \text{for each } \alpha \equiv 1 \pmod{\frak m},
	$$
	even though this cannot be satisfied for both $\alpha = 1$ and $\alpha =-1$ (which are both congruent to $1$ modulo~$\mathfrak m=(1)$ and generate the same ideal $(\alpha)$).  
	
	In particular, all the CM factors $A_f$ in $J_1(p^2)$ are associated with forms of level $p^2$ and have multiplicity $m_f=1$.
	Moreover, $\dim A_f$ is equal to the number of Galois conjugates of $f$, which are again CM. We deduce that $\dim (J_1(p^2)^\CM)$ is equal to the number of CM newforms of level $p^2$, which is equal to the number of 
	Gr\"ossencharacters of $\QQ(\sqrt{-p})$ with conductor $\frak m= (\sqrt {-p})$. Since the ratio of two Gr\"ossencharacters is a character of the ray class group $\widetilde{\rm Cl}_{\frak m} (\QQ(\sqrt{-p}))$ of $\frak m$, then the number of our Gr\"ossencharacters is at most equal to the number of character of the abelian group $\widetilde{\rm Cl}_{\frak m} (\QQ(\sqrt{-p}))$, which is in turn equal to the cardinality of the group $\widetilde{\rm Cl}_{\frak m}( \QQ(\sqrt{-p}))$ itself. Then, using the estimates in \cite[Theorem~8.10 and Lemma~8.16]{Nar04} to bound the (usual) class group of  $\QQ(\sqrt{-p})$, we obtain 
	\begin{equation}\label{eq: estimate CM J_1p2}
		\dim J_1(p^2)^\CM \le \# \widetilde{\rm Cl}_{\frak m} (\QQ(\sqrt{-p})) = \# \frac{(\calO_{\QQ(\sqrt{-p})}/\mathfrak m)^\times}{\calO_{\QQ(\sqrt{-p})}^\times} \cdot   \# \mathrm{Cl}(\QQ(\sqrt{-p}))
		\le 
		\frac{p-1}{2} \cdot \frac 1\pi (2+\log p) \sqrt{p}.
	\end{equation}
	For $p>100$, Equations~\eqref{eq: from H to J_1p2} and \eqref{eq: estimate CM J_1p2} give 
	\[
	\dim J_H^\CM \le \frac{p-1}{2} \frac{2+\log p}{\pi} \sqrt{p}  \le \frac{1}{4} p^{3/2}\log p -1.
	\]
	The other quantity appearing in the first condition of \Cref{cor:CShighgen} is the dimension of $J_H$, that is the genus of $X_H$. For $p>311$, the genus is at least $\frac{p(p-10)}{24}$: when $H$ is contained in an exceptional subgroup this follows from the fact that the genus is at least equal to the gonality, which we bound using \eqref{eq: gonality estimate Abramovich} and the inequality $[\GL_2(\ZZ/p\ZZ):H]\ge p(p-1)(p+1)/60$; when $H \in \{ C_\ns^+(p), C_\s^+(p)\}$ is the normalizer of a Cartan subgroup, the desired bound follows from the usual formula for the genus (see e.g., \cite[Equation~3.13 and Table~1]{DLM22}; for $H$ contained in $C_\ns^+(p)$, respectively $C_\s^+(p)$, we notice that $X_H$ has a map to $X_{C_\ns^+(p)}$, respectively $X_{C_\s^+(p)}$, and consequently larger than or equal to the genus.
	We deduce that, for $p>2^{14}$, we have the desired inequality, namely  
	\[
	\mathrm{genus}(X_H)\ge 
	\frac{p(p-10)}{24} \ge \frac 1 2 p^{3/2} \log p \ge 2 \dim J_H^\CM +2.
	\]
	We check the first condition of \Cref{cor:CShighgen} in the remaining cases as follows. 
	We only need to check the primes $311<p<2^{14}$ that are congruent to $3$ modulo~$4$. Moreover, by \cite[Theorem~5.11]{DLM22} and \cite{Gon16}, we can exclude the cases $H\in\{C_\ns^+(p),C_\s^+(p)\}$. Then, either $H$ is contained in an exceptional subgroup or $X_H$ has a map of degree $d\ge 2$ to  $X_{C_\ns^+(p)}$ or $X_{C_\s^+(p)}$. In the first case, the genus is at least $\frac{p(p-1)(p+1) \cdot 975}{60^\cdot 24 \cdot 4096 }$ using again the bound \eqref{eq: gonality estimate Abramovich} on the gonality; in the second case, using the Riemann-Hurwitz formula, we get that the genus is at least $\frac{p(p-10)}{12}-1$. We then check prime-by-prime that the genus is greater than $2\dim(J_H^\CM)+1$ using the inequality $\dim(J_H^\CM) \le  \frac{p-1}{2} \# \mathrm{Cl}(\QQ(\sqrt{-p}))$, as before, and computing the class number. 
\end{proof}

\begin{rem}
For general level $n$ we have the following partial results.
\begin{itemize}
\item    Condition \ref{condition_CM} in \Cref{th:highgenus} is satisfied whenever $X_H$ has genus at least $2$ and every prime $p\mid n$ satisfies $p\equiv 1 \pmod 4$. Indeed, under these assumption we have $J_H^\mathrm{C}=0$. Suppose $J_H$ has non-trivial CM-part, then since using the decomposition of $J_H$ into factors $A_f$ (see \cite{RSZ22}), the CM part comes from factors $A_f$ with $f$ CM newforms of level dividing $n^2$ (\cite[Proposition~1.6]{Shi72} and \cite{Shi71CM}). Picking one of those $f$ with CM by a certain imaginary field $K$, by \cite[p.~168]{Shi72} the discriminant of $K$ divides the level of $f$. This implies that such a discriminant is of the form $- \prod_{p\mid n} p^{e_p}$, which is congruent to $-1$ modulo $4$, which is absurd.
\item Say that two curves $X_H$ and $X_K$ are equivalent if $\Gamma_H = \Gamma_K$ (this equivalence implies that  $X_H$ and $X_K$ are isomorphic over $\ol\QQ$ and happen, e.g., in the context of \cite[Section~3.2]{Twists_geom_derickxhashimotoNajman}). Then, for all but finitely many equivalence classes of geometrically connected modular curves, 
Condition \ref{condition_gonality} is satisfied (up to choosing wisely the curve in the equivalence class). 

To see it, define \emph{geometric level} of an equivalence class $[X_H]$ of modular curves as the smallest $m$ such that $\Gamma_H$ contains $\Gamma(m)$. Then, we have $[X_H] = [X_{H'}]$ for $X_{H'}$ a geometrically connected modular curve of level a (divisor of) a power of $m$: write $n = n'r$, with $n'$ the maximal divisor of $n$ containing the primes appearing in $m$, so that $r=n/n'$  is prime to $n'$ and, under the canonical isomorphisms $\mathrm{SL}_2(\ZZ/n \ZZ) = \mathrm{SL}_2(\ZZ/n'\ZZ) \times \mathrm{SL}_2(\ZZ/r\ZZ)$ and $\mathrm{GL}_2(\ZZ/n \ZZ) = \mathrm{GL}_2(\ZZ/n'\ZZ) \times \mathrm{GL}_2(\ZZ/r\ZZ)$, then $H \cap \mathrm{SL}_2(\ZZ/n \ZZ) $ is of the form $K \times \mathrm{SL}_2(\ZZ/r \ZZ)$, and if we take $H'$ to be the projection, modulo $n'$, of $ H \cap ( \GL_2(\ZZ/n'\ZZ) \times \{\Id \} )$, then $H'$ has surjective determinant and satisfies $H'\cap \mathrm{SL}_2(\ZZ/n'\ZZ)  = K$. 
Then,  \cite[Equation (4.22)]{CP84} and \cite{Abr96} give a lower bound on the genus,  hence on the gonality, of $ X_{H', \ol \QQ}$ which is linear in $m$ and, for $m  \ge 1926$, one can prove that there is a prime $\ell$ coprime to $m$, hence to $n'$, satisfying the required inequality.  
\end{itemize}   
\end{rem}

\section{Explicit determination of the automorphism group}
\label{sec: algos}

\subsection{Computing modular automorphisms}\label{sec:MAut}

In this section, we compute the modular automorphisms of the modular curves $X_H$, when $\det H= (\ZZ/n\ZZ)^\times$. More generally, for every $H$, we compute the modular automorphisms of $\Gamma_H\backslash \ol\HH$ (see \Cref{def:modaut}) which form a subgroup of $\Aut(\Gamma_H \backslash \overline\HH )$, namely 
\begin{equation} \label{eqMAut}
\mathrm{MAut}(\Gamma_H \backslash \overline\HH) \cong  \mathcal N/\PP \Gamma_H,
\end{equation}
where $\mathcal N$ is the normalizer of $\PP \Gamma_H$ inside $\mathrm{PGL}_2^+(\QQ)$.

We want, then, to compute generators for the above quotient. We work in $\PGL_2^+(\QQ) = \GL_2^+(\QQ)/\QQ^\times$, and, for simplicity, we choose representatives in $\GL_2^+(\QQ)$ being reduced in the following sense: $m \in\GL_2(\QQ)$ is \emph{reduced} if 
\begin{equation}\label{eq:reduced}
m= \smt abcd \in \ZZ^{2\times 2} \cap \GL_2(\QQ) \quad \text{and} \quad \gcd(a,b,c,d)=1 . 
\end{equation}

The following proposition states that, in our computations, we can look at the normalizer of $\Gamma_H$ instead of $\PP\Gamma_H$ and we can reduce to a finite number of checks by looking at matrices modulo~$n^2$.

\begin{prop}\label{prop:finitecases}
Let $H$ be a subgroup of $\GLn$ containing $-\Id$ and let $\calN = \Norm_{\PGL_2^+ (\QQ)}\PP \Gamma_H$.
\begin{enumerate}
\item \label{propitem:P=GL}  A matrix $m\in \GL_2 (\QQ)$ normalizes $\Gamma_H$  if and only if its class $m \in \PGL_2(\QQ)$  normalizes $\PP\Gamma_H$; hence $\calN = \PP\big(\Norm_{\GL_2^+(\QQ)}\Gamma_H\big)$. 
\item \label{item:det} If $m$ is a reduced matrix (see \Cref{eq:reduced}) normalizing $\Gamma_H$, then 
\[
\det(m)\mid n.
\]
\item \label{item:test_m} Let $\gamma_1, \ldots, \gamma_k \in \Gamma_H$ be a full set of representatives for $\Gamma_H/\Gamma(n^2)$.
Then a reduced matrix $m$ with determinant dividing $n$ normalizes $\Gamma_H$ if and only if 
\begin{equation} \label{eq:test_m}
m \gamma_i m^{-1}, m^{-1} \gamma_i m \in \Gamma_H \text{ for all } i=1,\ldots, k.
\end{equation}
\item  \label{item:nD_then_equiv} Let $m,m'$ be two reduced matrices of determinant~$D$. If $m \equiv m'$ modulo~${nD}$, then they lie in the same coset $m\Gamma_H = m'\Gamma_H$. 
In particular $m$ normalizes $\Gamma_H$ if and only if $m'$ normalizes $\Gamma_H$ and, if this happens, $m$ and $m'$ define the same modular automorphism.
\end{enumerate}
\end{prop}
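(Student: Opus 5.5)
We prove the four items separately, using only elementary facts about $\Gamma_H$ and congruence subgroups.

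For item~\ref{propitem:P=GL}, one implication is immediate. Suppose $m$ normalizes $\PP\Gamma_H$, and take $\gamma\in\Gamma_H$. Then $m\gamma m^{-1}=\lambda\gamma'$ for some $\gamma'\in\Gamma_H$ and some scalar $\lambda$. Taking determinants gives $\lambda^2=1$, so $\lambda=\pm1$. Since $-\Id\in H$, we have $-\Id\in\Gamma_H$, so in either case $m\gamma m^{-1}\in\Gamma_H$. The description of $\calN$ follows at once.

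For item~\ref{item:det}, let $m$ be reduced and set $D=\det m$. Since $\Gamma(n)\subset\Gamma_H$ and $m$ normalizes $\Gamma_H$, the subgroup $m\Gamma(n)m^{-1}$ lies in $\Gamma_H\subset\SLZ$. Test this on $\gamma=\smt 1n01$. Then $m\gamma m^{-1}=\Id+\frac nD\, m\smt0100m^{\mathrm{adj}}$, and $m\smt0100m^{\mathrm{adj}}$ has entries $-ac,\ a^2,\ -c^2,\ ac$. Integrality of the conjugate forces $D\mid n\,a^2$, $D\mid n\,c^2$ and $D\mid n\,ac$. Applying the same test to the lower-triangular unipotent $\smt10n1$ gives $D\mid n b^2$, $D\mid n d^2$ and $D\mid n bd$. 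Using mixed elements of $\Gamma(n)$, we also get $D\mid n\,q(a,b)$ and $D\mid n\,q(c,d)$ for various quadratic forms $q$. Because $\gcd(a,b,c,d)=1$, for each prime $p$ some entry is a $p$-adic unit, and this yields $v_p(D)\le v_p(n)$.

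This step is the main obstacle. To make it clean, I would work locally at each prime $p\mid D$ and use the Smith normal form: write $m=\alpha\smt{1}{0}{0}{D}\beta$ with $\alpha,\beta\in\SLZ$, which is possible because $m$ is primitive. Then $\smt100D$ normalizes-up-to-conjugation a group containing $\Gamma(n)$. Conjugating $\smt1n01$ by $\smt100D$ gives $\smt1{n/D}01$, so integrality forces $D\mid n$.

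For item~\ref{item:test_m}, let $D\mid n$. For $\gamma\in\Gamma(n^2)$, we have $m\gamma m^{-1}=\Id+\frac1D m(\gamma-\Id)m^{\mathrm{adj}}$. Since $\gamma-\Id\equiv0\bmod n^2$ and $D\mid n$, this matrix is congruent to $\Id$ modulo $n$. So $m\Gamma(n^2)m^{-1}\subset\Gamma(n)\subset\Gamma_H$, and the same holds for $m^{-1}$, because $m^{-1}$ is $m^{\mathrm{adj}}/D$. Every element of $\Gamma_H$ has the form $\gamma_i\delta$ with $\delta\in\Gamma(n^2)$. Therefore condition~\eqref{eq:test_m} gives $m\Gamma_Hm^{-1}\subset\Gamma_H$ and $m^{-1}\Gamma_Hm\subset\Gamma_H$, which is equality.

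For item~\ref{item:nD_then_equiv}, write $m'=m+nDX$ with $X$ an integral matrix. Then $m^{-1}m'=\Id+n\,m^{\mathrm{adj}}X$. This is integral, has determinant $1$, and is congruent to $\Id$ modulo $n$, so it lies in $\Gamma(n)\subset\Gamma_H$. Hence $m'\in m\Gamma_H$. Right multiplication by an element of $\Gamma_H$ preserves normalizing $\Gamma_H$, and it does not change the induced map on $\Gamma_H\backslash\ol\HH$, since $m\gamma\tau$ and $m\tau$ have the same orbit when $m$ normalizes $\Gamma_H$.
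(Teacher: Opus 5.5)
Your proof is correct and follows the paper's argument in all four parts. Part~1 is the same determinant computation giving $\lambda=\pm 1$, with you making the use of $-\Id\in\Gamma_H$ explicit. Part~2 uses the same test matrices $\smt 1n01$ and $\smt 10n1$. In part~3 you recast the paper's congruence computation as $m\Gamma(n^2)m^{-1}\subseteq\Gamma(n)$ plus the coset decomposition, which is essentially the same argument. Part~4 is the same integrality argument showing $m^{-1}m'\in\Gamma(n)$.

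Item~2 is not really an obstacle. Your remark that, for each $p$, some entry of $m$ is a $p$-adic unit already gives $v_p(D)\le v_p(n)$ from $D\mid na^2,nb^2,nc^2,nd^2$, exactly as in the paper. So the Smith normal form detour, while also valid, is unnecessary.
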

\begin{proof}
For Part~\ref{propitem:P=GL}, we have to see that if $m$ normalizes $\PP \Gamma_H$, then it normalizes $\Gamma_H$ (the other implication is easy). Given $\gamma \in \Gamma_H$, we need to show that $m\gamma m^{-1} \in \Gamma_H$: by hypothesis, after projectivization it lies in $\PP\Gamma_H$, i.e., it is of the form $m\gamma m^{-1} = q\gamma'$ for $q$ a rational number and $\gamma'\in \Gamma_H$; since $1 = \det(\gamma) = \det(m\gamma m^{-1}) = q^2 \det(\gamma') = q^2$, then $q=\pm 1$, hence what we wanted.

For Part~\eqref{item:det} it is sufficient to notice that $\smt 1n01$ and $\smt 10n1$ are elements of $\Gamma_H$, hence $m \smt 1n01 m^{-1}$ and $m \smt 10n1 m^{-1}$ lie in $\Gamma_H \subset \SLZ$. If $D := \det(m)$ and $m = \smt abcd$, writing explicitly we get
\[
m \smt 1n01 m^{-1}  = \tfrac1D \smt{D-nac}{na^2}{-nc^2}{D+nac},\quad m \smt 10n1 m^{-1} =  \tfrac1D\smt{D+nbd}{-nb^2}{nd^2}{D-nbd} \quad  \in \quad\SLZ,
\]
implying that $D\mid na^2,\, D\mid nb^2, \, D\mid nc^2, \, D \mid  nd^2$ and hence that $D \mid n{\cdot}\gcd(a^2,b^2,c^2,d^2) = n$.

Now Part~\eqref{item:test_m}. If $m$ normalizes $\Gamma_H$, \Cref{eq:test_m} is true by definition.
For the other implication, suppose that $m$ satisfies \eqref{eq:test_m} and let $D := \det(m)$. Given $\gamma$ in $\Gamma_H$, we want to prove that $m\gamma m^{-1}$ and $m^{-1}\gamma m$ lie in $\Gamma_H$. We prove it for $m\gamma m^{-1}$. Indeed, by definition of the $\gamma_j$'s, we can chose $\gamma_i$ congruent to $\gamma$ modulo~$n^2$ and hence modulo~$nD$.
By hypothesis, $m\gamma_i m^{-1}$ lies in $\Gamma_H$, therefore if we write $m^{-1} = \tfrac 1D \overline m$ where $\overline m$ is the adjoint matrix, we have
\[
m\gamma_i \overline m = D \gamma_i'\quad \text{for certain }\gamma_i'\in \Gamma_H.
\]
Since $\gamma \equiv \gamma_i \mod{nD}$ and $\overline m$ has integral entries, we have
\[
m\gamma \overline m \equiv  D \gamma_i'  \mod{nD}.
\]
Hence
$m\gamma m^{-1} = \tfrac1D m\gamma \overline m$
has integral entries and is congruent to $\gamma_i'$ modulo $n$, therefore it belongs to~$\Gamma_H$.

Part~\eqref{item:nD_then_equiv} follows again by the fact that $\overline m 
= D m^{-1}$ has integral entries and by the congruence:
\[
D m^{-1} m'\equiv \smt D00D \mod{nD}.
\]
Hence $m^{-1}m'$ has integral entries and lies in $\Gamma(n) \subset \Gamma_H$. 
\end{proof}

The above proposition tells us that to compute the normalizer of $\Gamma_H$ we only need to look at finitely many matrices and check if they normalize. In certain cases we are able to reduce significantly the number of checks to perform. We now discuss these cases.

\begin{prop}\label{prop:modborel}
Let $H$ be a subgroup of $\GLn$ containing $-\Id$  and let $m$ be a reduced matrix (see \Cref{eq:reduced}) normalizing $\Gamma_H$.
Let $n=  \prod _{p\mid n} p^{n_p}$ and $\det m = \prod _{p\mid n} p^{e_p}$ be the respective prime factorizations and let $H' = H \cap \SLn$. 
Then:
\begin{enumerate}
\item \label{item:no_Bor} If the image of $H'$ modulo $p^k$ is not contained in a Borel subgroup of $\GL_2(\ZZ/p^k\ZZ)$, then $e_p<k$.
\item \label{item:Borp} If the image of $H'$ modulo $p$ is contained in the standard Borel subgroup $\{\smt**0*\}$, but not in a split Cartan subgroup, then either $e_p=0$ or $m\equiv \smt0*00$ modulo $p$.
\item \label{item:Borpk} If the image of $H'$ modulo $p^k$ is contained in the standard Borel subgroup $\{\smt**0*\}$ and, moreover, the image $H_p$ of $H'$ modulo~$p$ is not contained in a split Cartan subgroup nor in $\{\smt a*0a: a\in (\ZZ/p\ZZ)^\times\}$, then either  $e_p=0$ or $m\equiv \smt0*00$ modulo $p^k$.
\item \label{item:Car} If the image of $H'$ modulo $p^k$ is contained in a split Cartan subgroup, then $e_p \le n_p-k$.
\end{enumerate}
\end{prop}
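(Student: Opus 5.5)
The approach is to reduce everything to a local computation at $p$ via the Smith normal form of $m$. Since $m$ is reduced, its elementary divisors are $1$ and $D:=\det m$, so we can write $m=\gamma_1\smt 100D\gamma_2$ with $\gamma_1,\gamma_2\in\SLZ$. Because $\Gamma_H\supseteq\Gamma(n)$ and reduction $\Gamma_H\to H'$ is surjective, statements about $\Gamma_H$ modulo $p^k$ become statements about $H'$ modulo $p^k$. Conjugating by elements of $\SLZ$ only replaces $H'$ by a conjugate, so the hypotheses "contained in a Borel/Cartan" survive up to conjugation.

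For parts~\ref{item:no_Bor}--\ref{item:Borpk} I would use integrality. For $\gamma=\smt abcd$ we have
\[
\smt 100D\gamma\smt 100D^{-1}=\smt{a}{b/D}{Dc}{d}.
\]
Hence $m\gamma m^{-1}\in\SLZ$ for all $\gamma\in\Gamma_H$ forces every element of $\gamma_2\Gamma_H\gamma_2^{-1}$ to have upper-right entry divisible by $D$. Locally, $H'$ modulo $p^{e_p}$ is then contained in a conjugate of the lower Borel, namely the stabilizer of the line $\ker(m \bmod p^{e_p})$.

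\begin{itemize}
\item Part~\ref{item:no_Bor}: if $e_p\ge k$, reducing further modulo $p^k$ contradicts the hypothesis, so $e_p<k$.
\item Part~\ref{item:Borp}: take $e_p\ge1$. Then $m\bmod p$ has rank exactly $1$, since it is nonzero by reducedness. $H_p$ stabilizes both $\langle e_1\rangle$ and $\ker(m\bmod p)$. Applying the same argument to $\bar m=Dm^{-1}$, which is also reduced up to sign and also normalizes $\Gamma_H$, shows that $H_p$ stabilizes $\ker(\bar m\bmod p)=\operatorname{im}(m\bmod p)$. If any of these lines differed from $\langle e_1\rangle$, $H_p$ would stabilize two distinct lines and so lie in a split Cartan, which is excluded. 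Thus $\ker$ and $\operatorname{im}$ of $m\bmod p$ both equal $\langle e_1\rangle$, i.e.\ $m\equiv\smt0*00$.
\item Part~\ref{item:Borpk}: I would run the same argument modulo $p^k$, working with the submodules $\ker$ and $\operatorname{im}$ of $m$ modulo $p^k$.
\end{itemize}

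I expect part~\ref{item:Borpk} to be the main obstacle. Modulo $p^k$ with $e_p<k$ the kernel of $m$ is no longer a free line, so the stabilizer argument must be done carefully. Here the extra hypothesis that $H_p\not\subseteq\{\smt a*0a\}$ should be what guarantees that the only $H'$-stable lines through $e_1$ modulo $p^k$ are the standard one. I would first try to show that an element of $H'$ with distinct diagonal entries modulo $p$ acts on $(\ZZ/p^k\ZZ)^2$ with exactly two stable free lines. Then I would conclude from the Borel and non-Cartan hypotheses as in part~\ref{item:Borp}, lifting the statement $m\equiv\smt0*00$ step by step in $p$-adic precision.

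For part~\ref{item:Car} I would use the opposite inclusion. Since $\Gamma(n)\subseteq\Gamma_H$ and $m$ normalizes $\Gamma_H$, we get $m\Gamma(n)m^{-1}\subseteq\Gamma_H$. Locally at $p$, the group $\gamma_1^{-1}\Gamma_H\gamma_1$ therefore contains $\smt100D\smt{1}{p^{n_p}x}01\smt100D^{-1}$, which is a unipotent matrix whose upper-right entry has $p$-adic valuation $n_p-e_p$ for suitable $x$. Modulo $p^k$ this element must lie in a conjugate of a split Cartan subgroup. An element of a split Cartan with both eigenvalues $1$ is diagonalizable, hence equal to the identity. So this unipotent matrix is trivial modulo $p^k$, which gives $n_p-e_p\ge k$.
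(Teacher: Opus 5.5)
Your arguments for Parts~1 and~2 are correct and essentially match the paper's; you use $\ker(m\bmod p)$ and the Smith form where the paper uses the column lattice of $m$ and the adjugate. The genuine gap is Part~3, which you only sketch, and the sketch cannot work as stated.

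Note first that the conclusion $m\equiv\smt0*00\pmod{p^k}$ forces $e_p\ge k$, because $a,c,d\equiv 0\pmod{p^k}$ gives $p^k\mid ad-bc$. So the entire content of Part~3 is to exclude $1\le e_p<k$. Once $e_p\ge k$ is known, your lemma finishes the proof: the lemma is uniqueness of the $H'$-stable free line modulo $p^j$, via an element with distinct diagonal entries mod $p$ together with the non-Cartan hypothesis, and you apply it to the free lines $\ker(m\bmod p^k)$ and $\ker(\bar m\bmod p^k)$.

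For $1\le e_p<k$, however, one has
\[
\ker(m\bmod p^k)=p^{k-e_p}\gamma_2^{-1}\langle e_2\rangle,\qquad \operatorname{im}(m\bmod p^k)=\gamma_1\bigl(\ZZ/p^k\ZZ\oplus p^{e_p}\ZZ/p^k\ZZ\bigr).
\]
Stability of these submodules is equivalent only to stability of the lines $\gamma_2^{-1}\langle e_2\rangle$ and $\gamma_1\langle e_1\rangle$ modulo $p^{e_p}$. So the data you propose to use never sees past precision $p^{e_p}$, and ``lifting step by step'' has nothing to lift with.

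The missing ingredient is an argument exploiting that $\Gamma_H$ is upper triangular modulo $p^k$ on both sides of $\gamma\mapsto m^{-1}\gamma m$. The paper does this as follows:
\begin{itemize}
\item It writes the first column of $m$ as $p^h\lambda_1'$ with $h<k$.
\item From $\gamma m=m\tilde\gamma$, with $\tilde\gamma_{21}\equiv 0\pmod{p^k}$, it deduces that $\lambda_1'$ and $\lambda_2$ are eigenvectors of $\gamma\bmod p$ with eigenvalues $\tilde\gamma_{11}$ and $\tilde\gamma_{22}$.
\item It uses $\{\smt1*01\}\subseteq H_p$, a consequence of the non-Cartan hypothesis, to get $\lambda_1'\equiv\delta\lambda_2$.
\item It then picks $\tilde\gamma$ with $\tilde\gamma_{11}\not\equiv\tilde\gamma_{22}$, which gives a contradiction.
\end{itemize}

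In your own language the repair goes like this. Set $\delta=\smt100D$, $G_2=\gamma_2\Gamma_H\gamma_2^{-1}$ and $G_1=\gamma_1^{-1}\Gamma_H\gamma_1=\delta G_2\delta^{-1}$. Then $G_1$ is upper triangular modulo $D$ and $G_2$ is lower triangular modulo $D$. Uniqueness modulo $p^{e_p}$ and modulo $p$ therefore gives $\gamma_1^{-1}e_1=(u,p^{e_p}v')$ with $u$ a unit, and $\gamma_2\langle e_1\rangle\equiv\langle e_2\rangle\pmod p$. Now push the $G_1$-stable line $\langle\gamma_1^{-1}e_1\rangle$ (modulo $p^k$) through $\smt D001=D\delta^{-1}$, using $(\delta^{-1}h\delta)(D\delta^{-1})=(D\delta^{-1})h$. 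This yields a $G_2$-stable line $\langle(Dp^{-e_p}u,\,v')\rangle$ modulo $p^{k-e_p}$, with $k-e_p\ge 1$. By uniqueness this line must reduce to $\gamma_2\langle e_1\rangle\equiv\langle e_2\rangle$ modulo $p$, which is impossible since its first coordinate is a unit.

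A smaller point concerns Part~4. Your reduction to $\smt1{n/D}01\in\gamma_1^{-1}\Gamma_H\gamma_1$ is right; take $\smt1n01\in\Gamma(n)$. The final step, however, is a field argument: ``both eigenvalues $1$ and diagonalizable, hence the identity.'' Over $\ZZ/p^k\ZZ$ eigenvalues are not well defined; for example $\smt{1+p}00{1-p}$ has characteristic polynomial $(x-1)^2$ modulo $p^2$. The conclusion still holds, by either of two routes:
\begin{itemize}
\item A conjugate $gCg^{-1}$ of the diagonal Cartan stabilizes the free lines $\langle ge_1\rangle$ and $\langle ge_2\rangle$, one of which has a unit second coordinate $x_2$. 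If $\smt1y01$ stabilizes that line, then $yx_2\equiv 0$, so $y\equiv0\pmod{p^k}$.
\item Alternatively, use the paper's direct computation of $m\smt1n01m^{-1}$ and $m\smt10n1m^{-1}$.
\end{itemize}
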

\begin{proof}
Write $m = \smt abcd$. For Parts \eqref{item:no_Bor} and \eqref{item:Borp} let $\Lambda \subset \ZZ^2$ be the lattice generated by $\lambda_1=\left( \begin{smallmatrix} a \\ c \end{smallmatrix}\right)$ and $ \lambda_2=\left( \begin{smallmatrix} b \\ d \end{smallmatrix}\right)$. 
Since $m$ normalizes $\Gamma_H$, for every $\gamma \in \Gamma_H$ there is $\gamma' = \smt{x}{y}{z}{w}\in\Gamma_H$ such that $\gamma m = m \gamma'$. Hence, looking at the columns of $\gamma m$, we get $\gamma \lambda_1 = x\lambda_1{+}z\lambda_2 $ and $\gamma \lambda_2 =  y\lambda_1{+}w\lambda_2$.
Since $\gamma$ is arbitrary in $\Gamma_H$ and $\gamma'$ has integral entries, we have $\Gamma_H \Lambda \subset \Lambda$. Actually each matrix in $\Gamma_H$ is invertible, hence
\begin{equation} \label{eq:Lambda_m_invariant}
\Gamma_H \Lambda = \Lambda.
\end{equation}
Consider also $\overline \Lambda = \Lambda \bmod p$, i.e., the image of $\Lambda$ under the quotient map $\ZZ^2 \to (\ZZ/p\ZZ)^2$. Since $m$ is reduced, then $\overline\Lambda \ne \{ 0\}$. Moreover, if $p$ divides $\det(m)$, then $\overline \Lambda \ne (\ZZ/p\ZZ)^2$, hence $\overline \Lambda$ is a line inside $(\ZZ/p\ZZ)^2$. Because of \Cref{eq:Lambda_m_invariant}, this line is invariant by (the reduction $\bmod\; p$ of) every matrix in $\Gamma_H = \Gamma_{H'}$, hence by the image of $H'$ modulo~$p$. We proved that if $p$ divides $\det(m)$, the image of $H'$ modulo~$p$ is contained in a Borel subgroup of $\GL_2(\ZZ/p\ZZ)$. Analogously if $p^k$ divides $\det(m)$, then the image of $\Lambda$ modulo~$p^k$ is a line in $(\ZZ/p^k\ZZ)^2$ that is $\Gamma_{H'}$-invariant. Since the reduction map $\Gamma_{H'} \to H'$ is surjective, the image of $H'$ modulo~$p^k$ has an invariant line, which is equivalent to saying that the image of $H'$ in $\GL_2(\ZZ/p^k\ZZ)$ is contained in a Borel, proving Part~\eqref{item:no_Bor}.

With the hypothesis of Part~\eqref{item:Borp} we notice that the image of $H'$ modulo~$p$ has only one invariant line in $(\ZZ/p\ZZ)^2$, namely $\mathrm{Span}\left\{\left( \begin{smallmatrix} 1 \\ 0 \end{smallmatrix}\right)\right\}$. This implies that $\overline \Lambda =\mathrm{Span}\left\{\left( \begin{smallmatrix}	1 \\ 0 \end{smallmatrix}\right)\right\}$, i.e., that $c\equiv d \equiv 0 \pmod p$. Moreover $\det(m)m^{-1} = \smt d{-b}{-c}a$ is a reduced matrix that normalizes $\Gamma_H$ and, if we apply the same argument, we see that $a\equiv 0 \pmod p$ as well, proving Part~\eqref{item:Borp}.

For Part~\eqref{item:Borpk} we refine the argument before \Cref{eq:Lambda_m_invariant}. Let $\lambda_1, \lambda_2$ be the columns of $m$. Because of Part~\eqref{item:Borp} we know that  $\lambda_1 \equiv \vv00 \pmod p$ and $\lambda_2 \pmod p$ is a non-zero multiple of $\vv10$ (non-zero otherwise $m$ would not be reduced). Let $h$ be the ``$p$-adic valuation of $\lambda_1$,'' that is the maximal exponent such that $\lambda_1 = p^h \lambda_1'$ for $\lambda_1'\in\ZZ$ satisfying $ \lambda_1' \not\equiv \vv00 \pmod p$. 
It is enough to show that $h \ge k$, in fact this implies that $m =\smt abcd \equiv \smt0*0* \pmod{p^k}$ and if we then apply the same argument to the reduced matrix $\det(m)m^{-1} = \smt{d}{-b}{-c}a$, we see that $d \equiv 0 \pmod{p^k}$. 

Suppose by contradiction that $h<k$. For each $\gamma \in \Gamma_H$ we consider the conjugate matrix
\[
m^{-1}\gamma m = \tilde \gamma =  \smt{\tilde \gamma_{11}}{\tilde \gamma_{12}}{p^{k} \tilde \gamma_{21}}{\tilde \gamma_{22}}.
\]
    For each $\gamma\in \Gamma_H$, since $\gamma m = m \tilde \gamma$, by looking at the columns we see 
\begin{equation*}
p^h \gamma \lambda_1' =  p^h  \tilde \gamma_{11}\lambda_1' + p^k \tilde \gamma_{21}\lambda_2, \qquad 
\gamma \lambda_2 = p^h \tilde \gamma_{21}  \lambda_1' +  \tilde \gamma_{22} \lambda_2.
\end{equation*}
Since $k>h\ge 1$, we deduce that 
\begin{equation} \label{eq:congr_colslambda}
\gamma \lambda_1' \equiv  \tilde \gamma_{11}  \lambda_1' \pmod p, \qquad 
\gamma \lambda_2 \equiv \tilde \gamma_{22} \lambda_2  \pmod p.
\end{equation}
We now prove that $H_p$ contains the subgroup $U= \{\smt 1*01\}$. If by contradiction $U \cap H_p$ was not equal to $U \cong \ZZ/p\ZZ$, then $U \cap H_p$ would be trivial,  implying the injectivity of the folowing map 
\begin{align*}
H_p &\to (\ZZ/p\ZZ)^\times \\
\smt ab0d &\mapsto a .
\end{align*}
In particular $H_p$ would embed in a cyclic group of order $p-1$, and hence it would be itself cyclic, generated by a matrix $g$ of order dividing $p-1$. In particular the minimal polynomial of~$g$ would divide $x^{p-1}-1$, hence it would be separable, implying that $g$ would be a diagonalizable matrix and $H_p$ would be contained in a split Cartan subgroup. This contradiction tells us that $H_p$ contains~$U$.

In particular, we choose a matrix $\gamma \equiv \smt1101 \pmod p$ in $\Gamma_H$, so that the only eigenvectors of \mbox{$\gamma \pmod p$} are the non-zero multiples of $\vv10$. \Cref{eq:congr_colslambda} then implies that $\lambda_1',\lambda_2 \pmod p$ are both non-zero multiples of $\vv10$, hence that $\lambda_1' \equiv  \delta  \lambda_2 \pmod p$ for some $\delta \in (\ZZ/p\ZZ)^\times$. Since $H_p$ is not contained in $\{\smt a*0a: a\in (\ZZ/p\ZZ)^\times\}$ and since the conjugation by $m$ is surjective on $\Gamma_H$, we can find a $\gamma$ such that \mbox{$\tilde \gamma \pmod p$} does not belong to $\{\smt a*0a: a\in (\ZZ/p\ZZ)^\times\}$. In other words, we can take $\tilde\gamma_{11} \not\equiv \tilde\gamma_{22} \pmod p$. This is a contradiction, since, by \Cref{eq:congr_colslambda}, we have 
\[
\delta \tilde \gamma_{11} \lambda_2 \equiv  \tilde \gamma_{11} \lambda_1' \equiv \gamma\lambda_1' \equiv \delta \gamma \lambda_2\equiv \delta \tilde \gamma_{22} \lambda_2 \pmod p.
\]

For Part~\eqref{item:Car}, up to conjugation in $\SL_2(\ZZ)$ we can assume that the image of $H'$ modulo~$p^k$ is contained in the standard split Cartan subgroup $\{\smt*00*\}$. Writing $D = \det(m)$ and $m = \smt abcd$, as in \Cref{prop:finitecases} Part~\eqref{item:det}, we look at the matrices $\smt 1n01$ and $\smt 10n1$ and we notice that the conjugate matrices
\[
m \smt 1n01 m^{-1} = \tfrac1D \smt{D-nac}{na^2}{-nc^2}{D+nac}\quad\text{and }\quad m \smt 10n1 m^{-1} = \tfrac1D\smt{D+nbd}{-nb^2}{nd^2}{D-nbd} 
\]
must have integral entries and be congruent to $\smt*00*$ modulo $p^k$. Since at least one among $a,b,c,d$ is not a multiple of~$p$, we immediately deduce the conclusion.
\end{proof}

\begin{rem}
In \cref{prop:modborel}, the hypotheses in the various parts can be checked ``geometrically'':
\begin{itemize}
\item The image of $H'$ modulo~$p^k$ is contained in a Borel subgroup if and only if there exists a vector $v\in (\ZZ/p^k\ZZ)^2$ that is stable under $H'$ (equivalently it is a common eigenvector of generators of $H'$) and whose reduction modulo~$p$ is non-zero.

Notice that if $H'$ modulo~$p^k$ is contained in a Borel subgroup, it is easy to find a base change after which $H'$ is contained in the standard Borel: if $v$ is a vector as before, we can lift it to a primitive vector in $\ZZ^2$ (primitive means having coprime entries), and every basis $\mathcal B = \{v,w\}$ of $\ZZ^2$ will work.

\item The image of $H'$ modulo~$p^k$ is contained in a split Cartan subgroup if and only if there exist two vectors $v_1,v_2\in (\ZZ/p^k\ZZ)^2$ that are stable under $H'$ and whose reductions modulo~$p$ are linearly independent.
\item In Part~\eqref{item:Borpk}, after checking that $H'$ is contained in a Borel subgroup, checking that $H'$ is not contained in (a conjugate of) $\{\smt a*0a: a\in (\ZZ/p\ZZ)^\times\}$ modulo~$p$ is equivalent to checking that at least one generator of $H_p$ has distinct eigenvalues. 
Notice that Part~\eqref{item:Borpk} in the case $k=1$ is weaker than Part~\eqref{item:Borp}.
\end{itemize}
\end{rem}

Now we describe an algorithm to compute a set of representatives with coprime integer entries of $\mathcal N/\PP \Gamma_H$ (see \Cref{eqMAut}). As explained above, this set is actually a group, namely the modular automorphism group. An implementation of the following algorithm can be found in \cite[file \texttt{\detokenize{find_modular_automorphisms.txt}}]{script_MAut}.

\begin{Algorithm}\label{alg:modaut} Given a subgroup $H<\GLn$ containing $-\Id$, compute the set $\mathrm{MAut}(\Gamma_H\backslash \overline{\HH})$, i.e., a set of representatives of all modular automorphisms of $\Gamma_H\backslash \overline{\HH}$.
\begin{description}
\item[Step 1 (Possible determinants)] Find the set $\mathcal P$ of all the primes $p\mid n$ such that $H\cap\mathrm{SL}_2(\ZZ/n\ZZ)$ is contained in a Borel subgroup modulo $p$. Moreover, find the partition 
\[
\mathcal P  = \mathcal P_B \sqcup \mathcal P_C \sqcup \mathcal P_O,
\]
where:
\begin{itemize}
\item $\mathcal P_B$ is the set of primes such that the image of $H\cap \SLn$ modulo~$p$ is not contained in a split Cartan subgroup nor in a conjugate of $\{\smt a*0a\}$ (see Condition~(\ref{item:Borpk}) in \Cref{prop:modborel}).
\item $\mathcal P_C$ is the set of primes such that the image of $H\cap \SLn$ modulo~$p$ is contained in a split Cartan subgroup (see Condition~(\ref{item:Car}) in \Cref{prop:modborel}); 
for $p\in \mathcal P_C$ denote $c_p$ the maximum integer such that the image of $H \cap \SLn$ modulo~$p^{c_p}$ is contained in a split Cartan subgroup.
\item $\mathcal P_O$ contains the remaining elements of $\mathcal P$.
\end{itemize}
Let $p^{n_p}$ be the maximal power of $p$ dividing $n$ and for each $p\in \mathcal P$ denote $k_p$ the maximum integer such that the image of $H \cap \SLn$ modulo~$p^{k_p}$ is contained in a Borel subgroup. 

Compute the set $\mathcal D$ containing all integers of the form $\prod_{p\in \mathcal P} p^{e_p}$ with $e_p \in \{0,k_p\}$  if $p\in \mathcal P_B$, and  $e_p \in \{0, \ldots, \min(k_p, n_p-c_p)\}$ if $p\in \mathcal P_C$ and $e_p \in \{0, \ldots, k_p\}$ if $p\in \mathcal P_O$.

\item[Step 2 (Conjugation)] Compute $\gamma\in\SLZ$ such that
\[
H':=\gamma^{-1} \left( H\cap\mathrm{SL}_2(\ZZ/n\ZZ) \right)\gamma
\]
is contained in the subgroup of the upper triangular matrices modulo~$p^{k_p}$ for all $p \in \mathcal P_B \cup  \mathcal P_O$.

\item[Step 3 (Normalizer in $\SL_2$)] Compute the normalizer $N'$ of $H'$ in $\mathrm{SL}_2(\ZZ/n\ZZ)$.

\item[Step 4 (Quotient by $N'$)]  Consider the action of (the inverse image in $\SL_{2}(\ZZ/n^2\ZZ)$ of) $N'$ on row vectors $(c,d)\in (\ZZ/n^2\ZZ)^2$: compute a set $\mathcal R$ of representatives for the orbits of this action. 

\item[Step 5 (Check all candidates)] Let $\Xi:=\{ \Id \}$ and for each $D$ in $\mathcal D {-}\{1\}$ do the following:
\begin{description}
\item[5.a] Set $\Delta = \gcd\left( D, (\prod_{p\in \mathcal P_{B}} p^{k_p})(\prod_{p\in \mathcal P_{O} } p) \right)$.
\item[5.b] Take $\mathcal R_D$ to be the set of reductions modulo $nD$ of elements in $\mathcal R$ of the form $({\Delta c}, {\Delta d})$. For each $({\Delta c}, {\Delta d}) \in \mathcal R_D $ find all possible $a,b \in \ZZ/nD\ZZ$ such that the matrix
\[
m = \smt{\Delta a}{b}{\Delta c}{\Delta d}
\]
has determinant $D$ and $\gcd(n,\Delta a,b,\Delta c,\Delta d)=1$.
\item[5.c]\label{step_lift_check} For all matrices $m$ found above, compute  a reduced lift $\tilde m\in \ZZ^{2\times 2}$ having determinant $D$. If $ \tilde m \Gamma_{H'}  \tilde m^{-1}= \Gamma_{H'}$ (check it using the criterion in \Cref{eq:test_m}), add $\tilde m$ to $\Xi$.
\end{description}
\item[Step 6 (Discard possible repetitions)] If $m_1,m_2\in \Xi$ such that $m_1\ne m_2$ and $m_1 \Gamma_{N'} = m_2\Gamma_{N'}$, discard $m_2$. 
\item[Output] $\Xi$ together with the (lifts of the) elements of $N'/H'$ generates the modular automorphism group: for each $\nu \in N'/H'$ choose a lift $\tilde \nu \in \SLZ$ and output
\[
\{ \gamma (\xi \tilde\nu) \gamma^{-1} \,:\, \xi \in \Xi , \nu \in N'/H' \}.
\]
\end{description}
\end{Algorithm}

\begin{proof}[Correctness of Algorithm \ref{alg:modaut}]
We aim to show that every modular automorphism can be represented by an element in $\{ \gamma \xi \tilde \nu \gamma^{-1} \,:\, \xi \in \Xi , \nu \in N'/H' \}$.
Up to a base change, it is enough to prove that the set $\{ \xi \tilde \nu  \,:\, \xi \in \Xi , \nu \in N'/H' \}$ surjects onto all modular automorphisms of $\Gamma_{H'} \backslash \overline{\HH}$, where $H' = \gamma^{-1}(H\cap \SL_2(\ZZ/n\ZZ))\gamma$ as in Step~$2$: indeed if we define $H_1 = H\cap \SL_2(\ZZ/n\ZZ)$, then $\Gamma_H = \Gamma_{H_1} = \gamma \Gamma_{H'}\gamma^{-1}$ and the same holds for the normalizer.

Because of \Cref{prop:finitecases} Part~\eqref{propitem:P=GL}, to compute the modular automorphisms of $\Gamma_{H'} \backslash \overline{\HH}$ is equivalent to compute the matrices normalizing $\Gamma_{H'}$ up to scalars. Since we work up to scalars, we only need to look at reduced matrices (see \Cref{eq:reduced}).
We start considering the reduced matrices $m$ with  $\det m =1$. This case is dealt with in Step~$3$: since the reduction map $\SLZ \to \SL_2(\ZZ/n\ZZ)$ is surjective, then a matrix in $\SLZ$ normalizes $\Gamma_{H'}$ if and only if its reduction normalizes $H'$,
hence the third step computes all the reductions of matrices $m = \tilde\nu$ of determinant $1$ normalizing $\Gamma_{H'}$.

Notice that $\Gamma_{N'}$ is a subgroup of the normalizer $\calN$ of $\Gamma_{H'}$ (see \Cref{eqMAut}). Therefore it gives the following subgroup of the modular automorphisms:  $\Gamma_{N'}/\Gamma_{H'}$, which is made of all the classes of matrices $\tilde\nu$ considered in the Output. In particular, to prove that the algorithm is correct, it is enough proving that $\Xi$ contains a full set of representatives for $\PP \calN / \PP \Gamma_{N'}$. Each class in this quotient contains reduced matrices and, by the argument above, the only reduced representative of determinant $1$ is the identity matrix which lies in $\Xi$. We proceed with the case $\det m >1$.

Because of \Cref{prop:finitecases} Part~\eqref{item:det}, it is enough checking reduced matrices of determinant $D \mid n$. Actually \Cref{prop:modborel} tells us that not all determinants $D$ need to be checked. For example if the image modulo~$p$ of $H'$ is not contained in a Borel subgroup, then we don't need to check those $D$'s that are divisible by~$p$. Moreover, by zooming in on the images modulo~$p^k$ of $H\cap \SL_2(\ZZ/n\ZZ)$ and using \Cref{prop:modborel}, we further reduce to check the $D$'s belonging to the set $\mathcal D$ in Step~$1$: in fact, by Part~\eqref{item:no_Bor}, the $p$-adic valuation of $D$ must be at most $k_p$; moreover, by Part~\eqref{item:Car} of the proposition, if $H' \bmod p$ is contained in a split Cartan subgroup, the valuation is also at most $n_p-c_p$; finally if $p\in P_B$, Part~\eqref{item:Borpk} tells us that the valuation must be either $0$ or~$k_p$.

In Step~$5$ we go through all possible $D \in \mathcal D$, except $D=1$. We consider ``all'' the possible reduced matrices $m$ of determinant $D$ and check whether they normalize $\Gamma_{H'}$ or not using \Cref{prop:finitecases}, Part~\eqref{item:test_m} (see Step~$5.c$).
By \Cref{prop:finitecases}, Part~\eqref{item:nD_then_equiv}, it is enough to work ``modulo $nD$,'' i.e., it is enough to check a finite set of matrices $\{ m_1, \ldots, m_r \}$ whose reduction modulo~$nD$ surjects on the set $\{\overline m \in (\ZZ/nD\ZZ)^{2\times 2}: \det\overline m = D \}$: indeed two matrices with the same reduction modulo~$nD$ either they define the same modular automorphism, or neither of them normalize. 
More than this, since we are looking modulo~$\Gamma_{N'}$, it is enough to check a full set of representatives for
\begin{equation}\label{eq:a_quotient_of_matrices}
\{\overline m \in (\ZZ/nD\ZZ)^{2\times 2}: \det\overline m = D \}/ \Gamma_{N'} = \{\overline m \in (\ZZ/nD\ZZ)^{2\times 2}: \det\overline m = D \}/ \sim_{N'},
\end{equation}
where $m \sim_{N'} m'$ if there is a matrix $\nu$ in (the lift in $\SL_2(\ZZ/nD\ZZ)$ of) $N'$ such that $m' = m \nu$. Since multiplication by $\nu$ is equivalent to multiplying the rows of $m$ by $\nu$, it is enough to check matrices $m$ whose bottom row is in (the reduction modulo $nD$ of) $\mathcal R$ (see Step~$4$). In fact $\mathcal R$ is defined as a full set of representatives for $(\ZZ/n^2\ZZ)^2/N'$, hence their reductions give a full set of representatives for $(\ZZ/nD\ZZ)^2/N'$. 

In the algorithm we do not check all the classes in \Cref{eq:a_quotient_of_matrices} but we reduce to a smaller set of matrices. 
We have chosen $\gamma$ and $H'$ so that when the image modulo~$p^k$ of $H'$ is contained in a Borel subgroup but not in a split Cartan subgroup,  then it is contained in the standard Borel subgroup $\{\smt **0*\}$ modulo~$p^k$. \Cref{prop:modborel} then implies that it is enough to check matrices such that all entries except the top-right are multiple of $\Delta$. Therefore, among all matrices whose bottom row are (reductions of) rows $r\in \mathcal R$ we can discard the ones with $r$ not multiple of $\Delta$. In particular, as in Step~$5.b$, it is enough looking at matrices $m$ with bottom row in $\mathcal R_D$, with determinant~$D$ and with the entry $m_{1,1}$ divisible by~$\Delta$.
All these matrices are checked in Step~$5.c$, hence the algorithm is correct.
\end{proof}

\begin{rem}
In Step~$5.c$ we lift matrices modulo~$nD$ to reduced integral matrices. At the moment the implementation chooses a random lift, not one with ``small'' entries. For example in the case where $H$ is a Borel subgroup, it happens that the output of the algorithm does not contain the standard Fricke involution $\smt 0{-1}n0$, but another representative in the coset $\smt 0{-1}n0\Gamma_{N'} $.
\end{rem}

The above algorithm outputs the modular automorphisms as a set.
To compute the group structure of the automorphisms, one can compute the permutation representation as follows.

\begin{Algorithm}\label{alg:modaut_2}
Given $H<\GLn$ containing $-\Id$ and the set $\mathrm{MAut}$ of modular automorphisms computed in Algorithm \ref{alg:modaut}, compute the group structure.
\begin{description}
\item[Step 1]
Let $r:=\#\mathrm{MAut}$ be its order. Fix a bijection $b\colon\mathrm{MAut}\to \{1,2,\ldots,r\}$.
\item[Step 2] Take $x\in\mathrm{MAut}$, multiplying it by each element in $\mathrm{MAut}$ and using $b$, associate to $x$ a permutation $\sigma_x = b\circ (\cdot x)\circ b^{-1}$ of $\{1,2,\ldots,r\}$.
\item[Step 3] Repeat Step~$2$ until the group generated by the $\sigma_x$'s has order~$r$.
\end{description}
These steps give a permutation group (namely a subgroup of the symmetric group $S_{r}$) that is isomorphic to $\mathrm{MAut}$ with an isomorphism given by the generators.
\end{Algorithm}

\subsection{Excluding non-modular automorphisms}\label{sec:non-MAut}

We now handle computationally the problem of Section \ref{sec:theory_modularity}, namely deciding whether all the automorphisms of a (geometrically connected) modular curve $X_H$ are modular. 

Since there are only finitely many modular automorphisms, the answer is ``no'' for curves of genus $g=0$ or $1$. In genus $g\ge 2$, we can embed $X_H$ in a projective space,  compute the  group  $\Aut(X_{H,\ol \QQ})$ and compare its size with the size of the group of modular automorphisms, computed with \Cref{alg:modaut}. This procedure gives an answer to our question, but it is not always practical: for hyperelliptic curves, plane quartics, and in general when we have embedding with small degree and in a projective space of small dimension, the computation of $\Aut(X_{H,\ol \QQ})$ is feasible, but in general it can become computationally expensive. 

We then look for practical methods to check whether all the automorphisms of a modular curve $X_H$ of genus $g\ge 2$ are modular, possibly without getting an answer. 
\Cref{th:highgenus} gives a sufficient criterion to prove that all automorphisms are modular. Such criterion can be checked quite easily, indeed we have checked it on curves of level at most~$70$, and it is satisfied often in high genus, see \Cref{sec: computational}, in particular \Cref{fig:criterion-by-genus}. 
Moreover, in many cases, it is feasible to compute the automorphisms of the curves, after reducing modulo a prime $\ell$ of good reduction. By \Cref{lem:injaut}, our target group $G:= \Aut(X_{H,\ol \QQ})$ is a subgroup of $\Aut(X_{H, \ol\FF_{\ell}})$. More than that, it is a subgroup of $G_\ell:= \Aut(X_{H, \FF_{\ell^k}})$ where $k=k_\ell$ is the inertia degree of (the prime above) $\ell$ in the minimal number field where all the automorphisms of $X_H$ are defined. 
In particular, $\#G$ divides all possible cardinalities $\# G_\ell$ and the $\gcd$ of such cardinalities gives an upper bound on $\#G$, which we can again compare with the number of modular automorphisms. 

In the following result we give a (multiplicative) bound on the degrees~$k_\ell$, using \Cref{cor:field_subabelian_varieties_J_H} and the decomposition over~$\QQ$ of the Jacobian~$J_H$ (see \cite{RSZ22}).

\begin{prop} 
\label{cor:lowgenus}
Let $H<\GLn$ be a subgroup with $\det(H) = (\ZZ/n\ZZ)^\times$ and $-\Id\in H$ and such that $X_H$ has genus $g \ge 2$. 
Let $F$ be the minimal number field such that all the automorphisms of $X_H$ are defined over~$F$. 
Let $\ell$ be a prime number not dividing $n$ and let $k$ be the inertia degree of a prime in $F$ above~$\ell$.

Consider the isogeny decomposition of $J_H$ over~$\QQ$.
\begin{equation}\label{eq:isog_decomp}
J_H \sim \prod_{h \in S} A_h^{m_h}
\end{equation} 
where $S$ is a suitable set of newforms, and, for each newform $h$ belonging to this set, $A_h$ is the abelian variety associated to $h$ and $m_h$ is a suitable non-negative integer. For each subset $S' \subset S$, we denote 
\[
\deg(S') := \sum_{h\in S'} m_h \dim A_h .
\]
\begin{enumerate}[label=\alph*)]
\item\label{item:compositum_quad_crit} If the subset $S_0\subset S$ of forms that are not CM and have trivial character satisfies $\deg(S_0) \ge \tfrac 12 g+1$, then $F$ is contained in a compositum of quadratic extensions of $\QQ$. In particular, $k$ divides~$D:=2$.
\item\label{item:field_zetan_crit} If the subset $S_1\subset S$ of forms that are not CM satisfies $\deg(S_1) \ge \tfrac 12 g+1$, then $F$ is contained in a compositum of quadratic extensions of $\QQ(\zeta_n)$. In particular, $k$ divides $D:=2 \mathrm{ord}_{(\ZZ/n\ZZ)^\times}(\ell)$.
\item\label{item:field_withCM_crit} Let $S_2$ be any subset of CM forms appearing in $S$ that satisfies $\deg(S_1  \cup S_2) \ge \tfrac 12 g+1$ and let $\{K_1, \ldots, K_r \}$ be the quadratic imaginary fields such that there is a form $f\in S_2$ having CM by~$K_i$; then $F$ is contained in the compositum of the ray class fields of $K_i$ associated to the ideals $n^2 \calO_{K_i}$ and a compositum of quadratic extensions of $\QQ(\zeta_n)$. In particular, $k$ divides
\[
D:= 2 n^4 \cdot \lcm_{1\le i \le r} (\# \mathrm{Cl}(K_i)) \cdot \lcm_{p|n}(p^2{-}1).
\]
\end{enumerate}
\end{prop}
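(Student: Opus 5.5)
The plan is to reduce the field of definition of automorphisms of $X_H$ to the field of definition of endomorphisms of a large enough piece of $J_H$, and then read off the field from \Cref{cor:field_subabelian_varieties_J_H}. Let $A\subset J_H$ be the abelian subvariety, defined over~$\QQ$, isogenous to $\prod_{h\in S'} A_h^{m_h}$, with $S'=S_0$, $S_1$ or $S_1\cup S_2$ in the three cases. Let $B$ be the complementary piece. By hypothesis $\dim A\ge \tfrac12 g+1$, so $\dim A>\dim B+1$.

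As recalled in the proof of \Cref{th:commrel}, the argument of \cite[Lemma~4.1]{DLM22} shows that two automorphisms $u_1,u_2$ of $X_H$ coincide as soon as their actions on $J_H$ agree on $A$. Take $u\in\Aut(X_H)$ and $\tau\in\Gal(\ol\QQ/L)$, where $L$ is a field over which all endomorphisms of $A$ are defined. Then $u^\tau$ is again an automorphism of $X_H$. Since $A$ is defined over $\QQ$, it is stable under every endomorphism of $J_H$: in cases (a) and (b) because its simple factors are not isogenous to those of $B$. In case (c) this needs care, and I would rather pass to the $u$-stable hull of $A$, or reorganize by isotypic components, so that restricting to $A$ makes sense. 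We have $(u|_A)^\tau=u^\tau|_A$, and this equals $u|_A$ because $u|_A\in\End(A)$ is defined over $L$. Hence $u^\tau=u$, so $F\subseteq L$. The fields $L$ come from \Cref{cor:field_subabelian_varieties_J_H}: part~\ref{item:compositum_quad} gives (a), part~\ref{item:field_zetan} gives (b), and for (c) I would decompose $A$ as its non-CM part times its CM parts. The latter should be grouped by CM field $K_i$, since factors with different CM fields have no homomorphisms between them, so the endomorphism field is the compositum of the fields for each isotypic piece. Parts~\ref{item:field_zetan} and~\ref{item:field_withCM} then give the stated compositum.

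For the divisibility of $k$, note that $k$ is the order of Frobenius at a prime above $\ell$ in $\Gal(F/\QQ)$, and $\ell\nmid n$ is unramified in all the fields involved. In (a), Frobenius has order dividing $2$ in an elementary abelian $2$-group. In (b), write $N=\mathrm{ord}(\ell)$. The inertia degree in $\QQ(\zeta_n)$ is $N$, so $\sigma^N$ lies in $\Gal(F(\zeta_n)/\QQ(\zeta_n))$, which is elementary $2$-abelian, and therefore $k\mid 2N$. In (c), the Frobenius order in the compositum divides the lcm of its orders in each piece. For the ray class field of $K_i$ modulo $n^2$ over $\QQ$, the Galois group has order $2\,\#\mathrm{Cl}(K_i)\cdot\#(\calO_{K_i}/n^2)^\times/\#\calO_{K_i}^\times$. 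I would bound the order of an element by $2\cdot\#\mathrm{Cl}(K_i)$ times the exponent of $(\calO_{K_i}/n^2)^\times$. That exponent divides $n^4\cdot\lcm_{p\mid n}(p^2-1)$, since $(\calO/p^{2e})^\times$ has exponent dividing $p^{\ast}(p^2-1)$ with the $p$-part at most $p^{4e}$. The factor $2N$ from (b) divides $2\lcm(p^2-1)\cdot n$-type quantities, because $\mathrm{ord}(\ell)$ divides the exponent of $(\ZZ/n)^\times$, which divides $n\lcm(p-1)$. So everything divides the stated $D$.

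The main obstacle I expect is the rigorous step that an automorphism is determined by its restriction to $A$ and that $A$ is $u$-stable in case (c), where CM factors with the same field may also sit in $B$. The fallback is to enlarge $S_2$ to include all CM forms sharing the $K_i$; this only helps the degree bound.
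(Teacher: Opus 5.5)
Your route is the paper's: the mechanism of \cite[Lemma~4.1]{DLM22}, applied to the $S_0$-, $S_1$- or $(S_1\cup S_2)$-part; the fields from \Cref{cor:field_subabelian_varieties_J_H}; the splitting of the CM part by CM field in (c); and the bound on $k$ through the order of Frobenius. That lemma says an automorphism of $X_H$ is determined by its action on a piece $A$ of $J_H$ with $\dim A>\dim B+1$. The paper bounds $k$ via the exponent of the Galois group, which amounts to the same thing. On $u$-stability of $A$ you are more careful than the paper, which invokes the lemma without comment. Your fallback in (c) is correct: enlarging $S_2$ to all CM forms in $S$ with CM by one of the $K_i$ changes neither the $K_i$ nor the field from part~3 of the corollary, and it only increases the degree. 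It also leaves a complement whose geometric simple factors are CM elliptic curves with other CM fields, so there are no nonzero homomorphisms between that complement and $A$.

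The one justification that fails as stated is stability in (a). A different nebentypus does not rule out geometric isogenies. Take a non-CM newform $h$ with trivial character and a Dirichlet character $\chi$ of order at least $3$. Then $A_h$ and $A_{h\otimes\chi}$ share a simple factor over $\ol\QQ$, yet $h\otimes\chi$ has character $\chi^2\neq 1$. So "its simple factors are not isogenous to those of $B$" does not follow from the split into trivial and nontrivial character. What rescues (a) is that its hypothesis excludes nontrivial characters altogether. Each diamond operator $\langle d\rangle$ acts on $A_h$ through $\varepsilon_h(d)$, hence as the identity on the $S_0$-part, which has dimension at least $\tfrac12 g+1$. By the same lemma, $\langle d\rangle$ is then the identity on $X_H$, hence on $J_H$. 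Therefore every $h\in S$ has trivial character, $S_0=S_1$, and the complement of $A$ is the CM part, exactly as in (b). With this added, your argument is complete.
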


\begin{proof}
By \cite[Lemma~4.1]{DLM22}, if we find a field~$L$ and an abelian variety $A$ isogenous to a subvariety of~$J_H$ satisfying $\dim A\ge \tfrac12 g+1$ and such that all the endomorphisms of $A$ are defined over~$L$, then $F \subset L$.

For Part~\ref{item:compositum_quad_crit}, we pick $A = \prod_{h\in S_0} A_h^{m_h}$, and the result follows by \Cref{cor:field_subabelian_varieties_J_H}, Part~\ref{item:compositum_quad}.
Analogously, for Part~\ref{item:field_zetan_crit} we pick $A = \prod_{h\in S_1} A_h^{m_h}$, and the result follows by \Cref{cor:field_subabelian_varieties_J_H} Part~\ref{item:field_zetan}. 

For Part~\ref{item:field_withCM_crit}, we pick $A = \prod_{h\in S_1 \cup S_2} A_h^{m_h}$. To prove that all the endomorphisms of $A$ are defined over a field as in the statement, we consider the decomposition 
\[
A = A_0 \times \ldots \times A_s,
\]
where $A_0 = \prod_{h\in S_1} A_h^{m_h}$ and, for $i \ge 1$, we define $A_i$ to be the product of $A_h^{m_h}$ for $h$ varying across the forms in $S_2$ with CM by $K_i$. Then, for $i\ne j $, the irreducible factors of $A_i$ are distinct from those of $A_j$, implying that 
\[
\End(A) = \End(A_0) \times \ldots \times \End(A_s).
\]
In particular, the minimal field $L$ where all the elements in $\End(A)$ are defined is the compositum of the minimal fields $L_i$, for $i \in \{0, \ldots, s\}$, where all the elements in $\End(A_i)$ are defined. So, we can bound~$L$, and hence~$F$, using parts~\ref{item:field_zetan} and~\ref{item:field_withCM} of \Cref{cor:field_subabelian_varieties_J_H}.
Finally, to prove that $k$ divides~$D$, we prove that the exponent of the group $\Gal(L/\QQ)$ divides~$D$. It is enough to prove that the exponent of each Galois group $\Gal(L_i/\QQ)$ divides~$D$. For $L_0$ this is true because $2 \mathrm{ord}_{(\ZZ/n\ZZ)^\times} \ell $ divides~$D$. For $i\in \{1,\ldots, s\}$, we first observe that $\Gal(L_i/K_i)$ has index~$2$ in~$\Gal(L_i/\QQ)$, hence it is enough to prove that the exponent of~$\Gal(L_i/K_i)$  divides~$D$. Now, denoting by $\mathcal O_{K_i}$ the maximal order in~$K_i$, since $L_i$ is contained in the ray class field of $K_i$, with respect to  $n^2\mathcal O_{K_i}$, the exponent of $\Gal(L_i/K_i)$ divides the order of the ray class group $\mathrm{Cl}_{n^2\mathcal O_{K_i}} (\mathcal O_{K_i})$. The latter is an extension of  $\mathrm{Cl}(K_i)$ by $ (\calO_{K_i}/n^2\mathcal O_{K_i})^\times$ therefore its exponent divides the product of the order of $\mathrm{Cl}(K_i)$ and the exponent of $(\calO_{K_i}/n^2\mathcal O_{K_i})^\times$. Finally, considering the prime factorization $n^2 = \prod_{p\mid n}p^{e_p}$, we obtain the desired estimate by recalling that $(\calO_{K_i}/n^2\mathcal O_{K_i})^\times = \prod_{p\mid n}(\calO_{K_i}/p^{e_p}\mathcal O_{K_i})^\times$, and, depending on the splitting of~$p$, we can multiplicatively bound the order of each factor.
\end{proof}

\begin{rem}
Notice that in Part~\ref{item:field_withCM_crit} of the above \Cref{cor:lowgenus}, we can simply choose $S_2$ equal to the set of all CM forms in $S$: it satisfies the condition since the sum is equal to~$g$. In particular, when $g\ge 2$, Part~\ref{item:field_withCM_crit} can always be applied to bound the inertia degree of the field of definition of the automorphisms of $X_H$, even though, when applicable, a stronger bound can be obtained from Part~\ref{item:compositum_quad_crit} or Part~\ref{item:field_zetan_crit} or by choosing a set $S_2$ smaller than the one of all CM forms.
\end{rem}

As a result of the discussion above, we present an algorithm that, in many cases, proves that all automorphisms of $X_H$ are modular without computing $\Aut(X_H)$. In the remaining cases, the algorithm either proves that some automorphisms are not modular or is inconclusive.

\begin{Algorithm}\label{alg:non-mod}
Given a subgroup $H<\GLn$ containing $-\Id$ and with $\det(H) = (\ZZ/n\ZZ)^\times$ either prove that all the automorphisms of $X_H$ are modular, or disprove it, or fail. In the input, a tolerance parameter $t \in \ZZ_{>0}$ must be given.
\begin{description}
\item[Step 0 (Genus Check)] If the genus of $X_H$ is $0$ or $1$, then output 
\[
\text{\emph{$X_H$ has non-modular automorphisms.}}
\]

\item[Step 1 (Decomposition of the Jacobian)] Compute modular forms $h$ and multiplicities $m_h$ giving  the isogeny decomposition 
\begin{equation}\label{eq: decomposition JH in algo}
J_H \sim \prod A_h^{m_h}
\end{equation}
of the Jacobian of $X_H$.

\item[Step 2 (Check condition in \Cref{cor:CShighgen})] 
Compute 
\begin{itemize}
\item a lower bound $\widetilde{\rho} \le \textnormal{gonality}_{\ol{\QQ}}(X_H) $;
\item the smallest prime $\ell$ that does not divide $n$;
\item the set $S_1$ of forms $h$ in the decomposition \eqref{eq: decomposition JH in algo} that are not CM.
\end{itemize}
If
\begin{equation*}
\widetilde{\rho} \ge  2\ell +2 \quad \text{and} \quad \sum_{h\in S_1} m_h \dim A_h \ge \frac 12 \mathrm{genus}(X_H) +1,
\end{equation*}
then output 
\[
\text{\emph{Yes, all the automorphisms of $X_H$ are modular :)}} 
\]

\item[Step 3 (Modular automorphisms)]
Compute the cardinality $\mu$ of the group of modular automorphisms of $X_H$.

\item[Step 4 (Explicit model)]\label{step_eqs} Check if  $X_H$ is hyperelliptic and compute equations for a model of $X_H$ over~$\QQ$.

If $X_H$ is hyperelliptic and $\mu$ is odd, then output 
\[
\text{\emph{$X_H$ has non-modular automorphisms.}}
\]

\item[Step 5 (Automorphisms of the reductions)] Compute all primes of bad reduction for the model computed above and consequently the smallest $t$ primes $p_1< \ldots < p_t$ of good reduction. For $j\in\{1,\ldots,t\}$ do as follows:
\begin{description}
\item[Step 5.a] Compute the integer $D$ as in \Cref{cor:lowgenus}: if the condition in Part~\ref{item:compositum_quad_crit} is satisfied, set $D=2$; otherwise, if Part~\ref{item:field_zetan_crit} is satisfied, set $D=2 \mathrm{ord}_{(\ZZ/n\ZZ)^\times} (p_j)$; otherwise, choose $S_2$ as in Part~\ref{item:field_withCM_crit} and set $D$ as in the statement.

\item[Step 5.b] Reduce the equations in Step~$4$ modulo~$p_j$ and compute the cardinality $\nu_j$ of the automorphism group of $(X_H)_{\FF_{p_j^D}}$.

\end{description}
\item[Step 6 (Bound from the automorphisms of the reductions)] If $\gcd \{\nu_{1},\ldots, \nu_t\}=\mu$, then output 
\[
\text{\emph{Yes, all the automorphisms of $X_H$ are modular :)}} 
\]

\item[Step 7]\label{stepfail} Output 
\[
\text{\emph{Fail :( }}
\]
\end{description}
\end{Algorithm}

\begin{rem}
The tolerance parameter $t$ in \Cref{alg:non-mod} is the number of attempts in finite characteristic. If this number is large enough and the algorithm terminates with a failure in \Cref{stepfail}, one might reasonably conjecture the existence of non-modular automorphisms; the higher~$t$, the more likely the existence of non-modular automorphisms. Moreover, $\lcm\{\nu_{1},\ldots, \nu_t\}$ is a (multiplicative) upper bound for $\Aut(X_H)$ giving an upper bound for the number of non-modular automorphisms, if any.
\end{rem}

\begin{rem}[Computability of each step]
In Step~$1$ we can use the algorithm described in \cite{RSZ22} to find the newforms associated to the isogeny decomposition of the Jacobian of~$X_H$.

In Step~$2$, the forms that are not CM can be identified using \cite{donnellyVoightlmfdb} or \cite[Section 11]{LMFDBpaper}, while, for $\tilde \rho$, we can take the largest lower bound among the one given in \cite{Abr96} and those computed using \cite{Poonen_gonality2007} for the smallest primes $\ell_1<\ldots<\ell_t$ not dividing~$n$.

In Step~$3$ we can use \Cref{alg:modaut} to compute the cardinality of the group of modular automorphisms of~$X_H$. 

In Step~$4$ we can use \cite{Zyw21} (see also the files \texttt{\detokenize{GL2GroupTheory.m}} and \texttt{\detokenize{ModularCurves.m}} in \cite{Zywcode}) to find a basis of differentials and from it, compute the rest: one can check if the curve is hyperelliptic by checking if the space of quadratic relations among differentials has dimension $(g-1)(g-2)/2$, with $g$ the genus of the curve; 
if the curve is hyperelliptic, one can compute a hyperelliptic model as in \cite[Chapter 4]{galbraith1996equations}, otherwise we can compute the canonical embedding as in \cite{Mer18} or \cite{Zyw21}.

The other computations can be done by internal functions of Magma.
\end{rem}

\section{Computational results}
\label{sec: computational}

The LMFDB database \cite{lmfdb}, in its beta version \url{https://beta.lmfdb.org/}, collects, among many other things, data on every geometrically connected modular curve of level at most~$70$, i.e., on every curve $X_H$ with $H<\GLn$ and $n\le 70$, such that $\det(H)=(\ZZ/n\ZZ)^\times$. We are only interested in subgroups~$H$ containing~$-\Id$, since it does not change the coarse moduli space~$X_H$. For each such curve, the database records the level, the index, the genus, rigorous bounds for the gonality, and the isogeny decomposition of the Jacobian~$J_H$ into newform pieces together with their multiplicities. We used these data to check the criterion of \Cref{cor:CShighgen} for all these curves (\Cref{sec:criterion-stats}) and to compute their group of modular automorphisms following the algorithm of \Cref{sec:MAut}. 
We also look for curves admitting exceptional automorphisms.

The full data produced for every curve we analyzed, curve by curve, can be found on the GitHub repository \cite{script_MAut}.

\subsection{Checking a sufficient criterion for modularity}\label{sec:criterion-stats}

We checked the two conditions of \Cref{cor:CShighgen} for all the geometrically connected modular curves~$X_H$ of genus at least~$2$ and level at most~$70$, which are $797\,589$ curves. 

Out of the 
curves considered, the criterion is satisfied for $507\,967$ of them, a success rate of $63.69\%$, thus proving that all automorphisms of such curves are modular. 
As expected, this success rate is far from uniform in the genus: as \Cref{fig:criterion-by-genus} shows, it is $0\%$ for small genus, and increases to $100\%$ once the genus is large enough. More precisely, we have the following: 
for the $36\,921$ curves with genus between $2$ and $9$,  
the criterion is never satisfied; for the $560\,498$ curves with 
genus between $10$ and $209$, the success rate is a more moderate $54.91\%$; for the $200\,170$ curves with  
genus at least~$210$, the criterion is always satisfied. The data represented in \Cref{fig:criterion-by-genus} are stored in the file \texttt{\detokenize{results70.xlsx}} in the GitHub repository \cite{script_MAut}.

\begin{figure}[htb]
\centering
\includegraphics[width=0.92\textwidth]{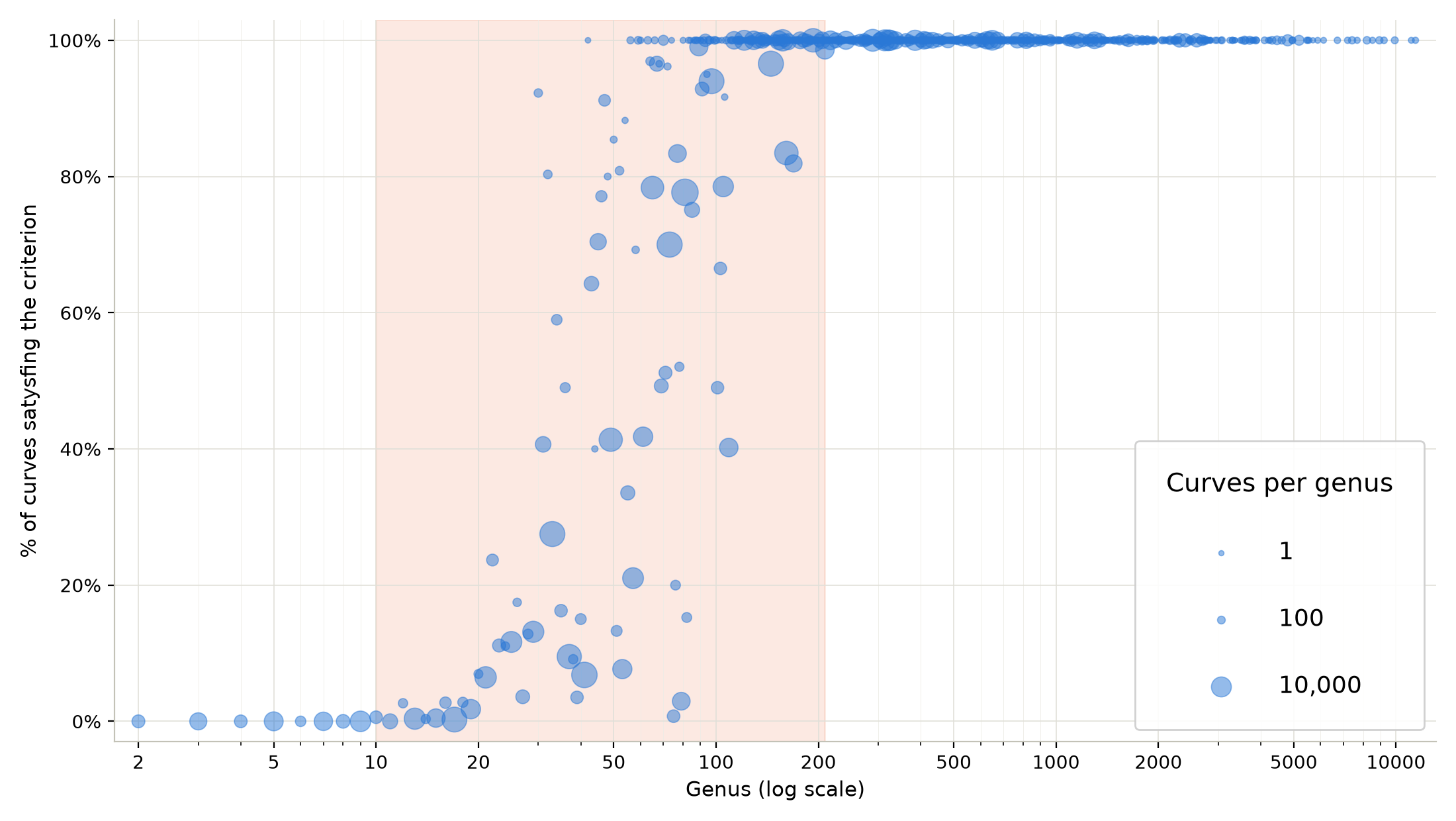}
\caption{Percentage of geometrically connected modular curves of level at most $70$ satisfying the criterion of \Cref{cor:CShighgen}: for every genus $g \ge 2$ occurring among such curves, the height of the corresponds point is the percentage of the curves of genus~$g$ for which the criterion holds, and the area of the point grows with the number of such curves.
\\ 
The shaded band marks the range $10\le g\le 209$: outside of it the criterion is either never satisfied ($g\le 9$) or always satisfied ($g\ge 210$, among the curves of level at most~$70$ considered here).}
\label{fig:criterion-by-genus}
\end{figure}

\subsection{Computing modular automorphisms}

We computed the group of modular automorphisms for all the $56\,748$ geometrically connected modular curves of level at most~$39$ and genus at least~$2$. See the file \texttt{\detokenize{MAut_results_up_to_level_39.txt}} in the GitHub repository \cite{script_MAut} for the output of this computation, namely, for each curve~$X_H$, the Magma code of the abstract group structure of $\mathrm{MAut}(X_H)$ and representatives of generators in the form of matrices with coprime integer entries.

We show in \Cref{tab:more_freq} the abstract group structures occurring more frequently.

\begin{table}[H]
\centering
\begin{tabular}{lrr}
\hline
\textbf{Group description} & \textbf{Number of curves} & \textbf{Percentage} \\
\hline
$C_2 \times D_4$                                  & 4477 & 7.89\\
$C_2 \times (C_2^2 \rtimes C_4)$                  & 3296 & 5.81\\
$C_2^2$                                           & 2854 & 5.03\\
$C_2^3$                                           & 2095 & 3.69\\
$C_2^2 \rtimes C_4$                               & 1768 & 3.12\\
$C_2 \times (C_2^2 . D_4)$                        & 1552 & 2.73\\
$C_2^2 \wr C_2$                                   & 1188 & 2.09\\
$D_6$                                             & 1162 & 2.05\\
$C_2^4 . C_2^3$                                   & 1152 & 2.03\\
$A_4 \times (C_2^2 \rtimes C_4)$                  & 1088 & 1.92\\
$D_4$                                             & 1019 & 1.80\\
$C_2^2 \times S_3$                                & 1009 & 1.78\\
$\mathrm{GL}_2(\mathbb Z/4\mathbb Z)$             &  899 & 1.58\\
$C_2 \times S_4$                                  &  847 & 1.49\\
$C_2^4 . (C_2 \times C_{12})$                     &  640 & 1.13\\
$(C_2 . C_4^2) . D_6$                             &  612 & 1.08\\
$(C_2^2 \times C_4) \rtimes C_2$                  &  608 & 1.07\\
$C_2$                                             &  601 & 1.06\\
$C_2 \times (D_4 \rtimes C_2)$                    &  576 & 1.01\\
$D_4 \times A_4$                                  &  512 & 0.90\\
$(C_2^3 . (A_4 \rtimes C_4)) . C_2$               &  512 & 0.90\\
$C_2^2 \times D_4$                                &  512 & 0.90\\
\hline
\textbf{Total}                                    & \textbf{28979} & \textbf{51.06} \\ 
\hline
\end{tabular}
\caption{Most frequent structural descriptions of modular automorphism groups. We use $A.B$ to denote an extension of $A$ by $B$ (not necessarily split),
while $\times$, $\rtimes$, and $\wr$ denote direct, semidirect, and wreath
products, respectively.}
\label{tab:more_freq}
\end{table}

It turns out that only $72$ curves among the curves we analyzed ($0.127\%$) have trivial modular automorphism group.

By contrast, we found more ``maximal'' curves for what concerns the number of automorphisms. Specifically, we found $214$ curves $X_H$ having genus $g\ge 2$ such that the cardinality of $\mathrm{MAut}(X_H)$ is equal to the maximum number of automorphisms for a curve over~$\CC$ with that given genus~$g$. In particular, for all such curves, one deduces that all their automorphisms are necessarily modular. 
For example, for the five curves \href{https://beta.lmfdb.org/ModularCurve/Q/7.168.3.b.1}{7.168.3.b.1}, \href{https://beta.lmfdb.org/ModularCurve/Q/7.168.3.a.1}{7.168.3.a.1}, \href{https://beta.lmfdb.org/ModularCurve/Q/21.168.3.a.1}{21.168.3.a.1}, \href{https://beta.lmfdb.org/ModularCurve/Q/28.168.3.a.1}{28.168.3.a.1}, \href{https://beta.lmfdb.org/ModularCurve/Q/35.168.3.a.1}{35.168.3.a.1} (using the LMFDB label), the modular automorphisms group hits the Hurwitz bound $84(g-1)$.
For example, the first curve, which is a model over $\QQ$ of one of the components of $X(7)_{\ol\QQ}$ since we have $\Gamma_H = \Gamma(7)$, has genus~$3$ and is isomorphic to the Klein quartic
\[
x^3y + xz^3 + y^3z = 0 ,
\]
and the embeddings 
\[
\mathrm{PSL}_2(\FF_7) = \mathrm{PSL}_2(\ZZ)/\Gamma(7) 
\hookrightarrow \mathrm{MAut}(X_H)
\hookrightarrow \mathrm{Aut}(X_H),
\]
already gives a subgroup of the automorphism group of cardinality $168 = 84(g-1)$. 
Moreover, for many values of $g$ the Hurwitz bound cannot be achieved (for example, there are only 32 values $g \in  [2, 11905]$ for which Hurwitz bound can be hit, see the main theorem in \cite[Section 6]{Conder_87} for the list). 
In \cite{ConderAMS} Conder has computed, for each $g \in [2,301]$, the maximum number of automorphisms of a complex curve of genus~$g$, see Appendix~1 of loc. cit. or \cite{Conderlist} for the complete list. Comparing these data with our computations, we found the already cited ``maximal'' curves which we count, depending on the genus, in \Cref{tab:max}.

\begin{table}[H]
\centering
\begin{tabular}{ccccc}
\hline
\textbf{Genus} &
\textbf{Number maximal of curves} &
\textbf{Total in genus} &
\textbf{Percentage} &
$\boldsymbol{|\mathrm{MAut}|=\max|\mathrm{Aut}|}$ \\
\hline
2   & 12  & 865  & 1.39  & 48   \\
3   & 5   & 2300 & 0.22  & 168  \\
4   & 6   & 782  & 0.77  & 120  \\
5   & 108 & 3252 & 3.32  & 192  \\
13  & 44  & 2881 & 1.53  & 360  \\
81  & 26  & 879  & 2.96  & 1536 \\
141 & 8   & 27   & 29.63 & 2184 \\
196 & 5   & 7    & 71.43 & 3420 \\
\hline
\end{tabular}
\caption{Genera for which the modular automorphism group attains the maximum possible order of the automorphism group. The percentage is computed with respect to all curves of the corresponding genus in the database of levels at most $39$.}
\label{tab:max}
\end{table}

Continuing on the ``large group of modular automorphisms'' topic, we found more curves for which one can prove modularity of all automorphisms just using the abundance of modular automorphisms.

\begin{thm}
The $972$ modular curves $X_H$ listed in \Cref{tab: many modaut} in the Appendix  satisfy 
\begin{equation}\label{eq: many modaut}
|\mathrm{MAut}(X_H)|>\frac{1}{2}\max_{\operatorname{genus}(X)=\operatorname{genus}(X_H)}|\mathrm{Aut}(X)|.
\end{equation}
In particular, all their automorphisms are modular.
\end{thm}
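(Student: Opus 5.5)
The statement has two parts. The first is a finite computation; the second is a purely group-theoretic deduction from it. I would handle them in that order.

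\emph{The inequality.} For each of the $972$ curves listed in \Cref{tab: many modaut} we need $|\mathrm{MAut}(X_H)|$ and an upper bound for $\max_{\operatorname{genus}(X)=g}|\mathrm{Aut}(X)|$.
\begin{enumerate}
\item Compute $|\mathrm{MAut}(X_H)|$ with \Cref{alg:modaut}. This already gives $\mathrm{MAut}(X_H)\cong\calN/\PP\Gamma_H$ by \Cref{prop:finitecases}, so only the cardinality of the output set is needed. This was done for all curves of level at most $39$ and genus $g\ge2$.
\item Take $g=\operatorname{genus}(X_H)$ from the LMFDB data.
\item Take $\max_{\operatorname{genus}(X)=g}|\mathrm{Aut}(X)|$ from Conder's list \cite{ConderAMS, Conderlist}, which covers every $g\in[2,301]$.
\item Since the levels considered are at most $39$, the genera occurring lie in this range, or at least one checks that the curves in the table have genus at most $301$.
\end{enumerate}
The inequality \eqref{eq: many modaut} is then a direct comparison of integers, curve by curve. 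It is recorded in the appendix table and in the repository \cite{script_MAut}.

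\emph{All automorphisms are modular.} Since $g\ge2$, $\Aut(X_H)$ is finite, and $\mathrm{MAut}(X_H)$ is a subgroup of it; indeed the modular automorphisms are automorphisms of $\Gamma_H\backslash\ol\HH\cong X_H(\CC)$ and hence of $X_{H,\ol\QQ}$ by GAGA. By Lagrange, $|\Aut(X_H)|=[\Aut(X_H):\mathrm{MAut}(X_H)]\cdot|\mathrm{MAut}(X_H)|$. Combining this with
\[
|\Aut(X_H)|\le\max_{\operatorname{genus}(X)=g}|\mathrm{Aut}(X)|<2|\mathrm{MAut}(X_H)|
\]
forces the index to be $1$. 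Hence $\Aut(X_H)=\mathrm{MAut}(X_H)$.

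The main obstacle is not conceptual but the reliability of the computation. One must ensure that \Cref{alg:modaut} returns exactly $\mathrm{MAut}(X_H)$, with no repetitions and no omissions; this is where Step~6 and the correctness proof are used. One must also ensure that Conder's maxima are indeed the maxima over all complex curves of genus $g$. The step I would double check first is the deduplication modulo $\PP\Gamma_H$: overcounting $|\mathrm{MAut}|$ is the only way the argument could fail.
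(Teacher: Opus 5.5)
Your proposal is correct and follows the paper's proof, which simply reads $|\mathrm{MAut}(X_H)|$ off the output of \Cref{alg:modaut} for level at most $39$ and compares it with Conder's table of maximal automorphism group orders; your Lagrange index argument just makes the ``in particular'' explicit. One small correction: level at most $39$ does not by itself keep the genus within Conder's range $g\le 301$ (the curves with $\Gamma_H=\Gamma(37)$ have genus $1768$), so you do need your fallback of checking directly that the listed curves have genus at most $209$.
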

\begin{proof}
It follows from the file \texttt{\detokenize{MAut_results_up_to_level_39.txt}} in \cite{script_MAut} and the comparison with the values of the maximum in \cite[Appendix~1]{ConderAMS} or \cite{Conderlist} for a given genus.
\end{proof}

Finally, we look at some particular cases. 
In \Cref{tab:exotic} we show a selection of some large and exotic groups we encountered.
\begin{table}[H]
\centering
\begin{tabular}{lrrr}
\hline
\textbf{Group description} & \textbf{Order} & \textbf{Number of curves} & \textbf{Percentage} \\
\hline
$C_2^3.(A_4 \rtimes C_4).C_2$ & 768 & 512 & 0.90\\
$(C_4 \times Q_8).C_{12}.C_2$ & 768 & 224 & 0.39\\
$C_2^2.\mathrm{SD}_{16}.C_2$ & 128 & 223 & 0.39\\
$C_2^3.(Q_8 \times A_4).C_6$ & 4608 & 165 & 0.29\\
$Q_8.(C_3 \times (C_2^2.D_4))$ & 768 & 161 & 0.28\\
$(C_2 \times (D_4 \rtimes C_2)).C_{12}.C_2$ & 768 & 86 & 0.15\\
$(C_2^2 \times C_4).(C_2 \wr C_4)$ & 1024 & 70 & 0.12\\
$(C_4^2 \times C_8).C_4^2.S_3$ & 12288 & 32 & 0.05\\
$C_6^3.(S_3 \times A_4)$ & 15552 & 30 & 0.05\\
$(A_4 \rtimes C_4).\mathrm{PSL}_2(7)$ & 8064 & 30 & 0.05\\
$(C_2 \times \mathrm{OD}_{32}).C_8.C_2$ & 1024 & 22 & 0.03\\
$\mathrm{PSL}_2(19) \times S_3$ & 20520 & 11 & 0.01\\
$\mathrm{PSL}_2(37)$ & 25308 & 10 & 0.01\\
$\mathrm{SL}_2(3).\mathrm{PSL}_2(13)$ & 26208 & 6 & 0.01\\
\hline
\end{tabular}
\caption{Some examples of particularly rich group structures occurring among the modular automorphism groups. Here $C_n$, $D_n$, $Q_n$, $\mathrm{SD}_n$, $\mathrm{OD}_n$, $S_n$, and $A_n$ denote, respectively, the cyclic, dihedral, generalized quaternion, semidihedral, other-dihedral, the symmetric, and alternating groups. The notation $\mathrm{SL}_n(q)$ and $\mathrm{PSL}_n(q)$ has its usual meaning. We use $\times$, $\rtimes$, $\wr$, and $.$ for direct products, semidirect products, wreath products, and extensions (not necessarily split), respectively.}
\label{tab:exotic}
\end{table}

\subsection{Exceptional candidates up to some genus}

We looked for curves with exceptional automorphisms among hyperelliptic curves. Indeed, many of the known examples of modular curves with exceptional automorphisms known in the literature are hyperelliptic. Moreover, for such curves, one cannot apply \Cref{cor:CShighgen} since, by definition their gonality is equal to~$2$. On the other hand, using theis  hyperelliptic model, we can compute their automorphism group efficiently, e.g. using Magma's internal function \texttt{AutomorphismsOfHyperellipticCurve}, and compare it with the group of modular automorphisms.  
Applying this procedure to the modular curves of level at most $39$ that admit a hyperelliptic model in the LMFDB, we obtain the following.

\begin{thm}\label{thm: non modular cases}
The $286$  hyperelliptic modular curves listed in \Cref{tab: non-modular aut} admit a non-modular automorphism.
\end{thm}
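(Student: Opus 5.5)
The statement is computational, so the proof will consist of an explicit verification carried out curve by curve. For each of the $286$ curves $X_H$ in \Cref{tab: non-modular aut} we will compare two integers: the order $\mu=|\mathrm{MAut}(X_H)|$ and the order $|\Aut(X_{H,\ol\QQ})|$. Since $\mathrm{MAut}(X_H)\subseteq\Aut(X_H)$, a strict inequality $\mu<|\Aut(X_H)|$ shows that some automorphism is not modular.

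The first step computes $\mu$. This will be read off from the output of \Cref{alg:modaut} and \Cref{alg:modaut_2}, recorded in \texttt{\detokenize{MAut_results_up_to_level_39.txt}}. Correctness rests on \Cref{prop:finitecases} and \Cref{prop:modborel}, which reduce the search for normalizing matrices to a finite, provably complete list of reduced matrices of determinant dividing $n$.

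The second step computes $\Aut(X_{H,\ol\QQ})$. We will take the hyperelliptic model $y^2=f(x)$ over $\QQ$ stored in the LMFDB, or recompute it from a basis of weight-$2$ cusp forms as in Step~4 of \Cref{alg:non-mod}. We then run Magma's \texttt{AutomorphismsOfHyperellipticCurve} over a splitting field large enough to contain all geometric automorphisms. This field can be bounded a priori: the automorphisms of a hyperelliptic curve lift the M\"obius transformations permuting the roots of $f$ (together with the point at infinity, when $\deg f$ is odd), so they are defined over the splitting field of $f$, extended by at most a quadratic extension. Equivalently, one can compute the reduced automorphism group in $\PGL_2(\ol\QQ)$ stabilizing the branch locus and then multiply its order by $2$.

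For many curves there is an immediate shortcut, already built into Step~4 of \Cref{alg:non-mod}. The hyperelliptic involution is central in $\Aut(X_H)$, and if it were modular then $\mu$ would be even. Hence every curve with $\mu$ odd is settled at once. For the remaining curves the explicit comparison of orders does the job.

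The main obstacle is making the computation of $\Aut(X_{H,\ol\QQ})$ rigorous rather than a computation over a too-small field. My plan is to compute the branch divisor's stabilizer in $\PGL_2$ geometrically, through invariants of binary forms over $\ol\QQ$, which Magma does via the geometric automorphism group of the binary form $f$. A second point needs care: we must confirm that the model is genuinely isomorphic over $\QQ$ to $X_H$. This follows from the LMFDB provenance, or it can be cross-checked by counting points modulo a few good primes against the traces of Frobenius on $J_H$ given by its newform decomposition.
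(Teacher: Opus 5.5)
Your proposal is correct and follows essentially the paper's argument: compute $|\mathrm{MAut}(X_H)|$ with \Cref{alg:modaut} and \Cref{alg:modaut_2}, compute $\Aut(X_H)$ from the LMFDB hyperelliptic model with Magma's \texttt{AutomorphismsOfHyperellipticCurve}, and observe a strict inequality of orders for these $286$ curves. One small inaccuracy: lifting the whole reduced automorphism group may require adjoining several square roots to the splitting field of $f$, not a single quadratic extension, but your alternative of computing the stabilizer of the branch locus in $\PGL_2(\ol\QQ)$ and doubling its order avoids this.
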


Among the curves we found, we recovered some known examples: the curve $X_\mathrm{s}^+(11)$ labeled by LMFDB as \href{https://beta.lmfdb.org/ModularCurve/Q/11.66.2.a.1}{11.66.2.a.1}; the curve $X_\mathrm{ns}^+(16)$ labeled by LMFDB as \href{https://beta.lmfdb.org/ModularCurve/Q/16.64.2.a.1}{16.64.2.a.1}; and the curve $X_0(37)$ labeled by LMFDB as \href{https://beta.lmfdb.org/ModularCurve/Q/37.38.2.a.1}{37.38.2.a.1}.
We also find the following examples.

\begin{ese}
The curve $X_H$ labeled   \href{https://beta.lmfdb.org/ModularCurve/Q/32.48.2.b.2}{32.48.2.b.2} in the LMFDB has $|\mathrm{Aut}(X_H)|=48$ and $|\mathrm{MAut}(X_H)|=16$. In particular, there is at least one non-modular automorphism of order~$3$. The non-modular automorphisms not defined over the rationals are modular: the curve admits the following model over $\QQ$
\[
Y^2=-2X^5Z -2XZ^5,
\]
and the group  $\mathrm{Aut}_\QQ(X_H)$ has order~$4$, with generators
\[
g_1\colon
\begin{cases}
X \longmapsto X,\\
Y \longmapsto -Y,\\
Z \longmapsto Z,
\end{cases}
\qquad
g_2\colon
\begin{cases}
X \longmapsto Z,\\
Y \longmapsto Y,\\
Z \longmapsto X,
\end{cases}
\]
which are both modular since they preserve the set of cusps (which is the set of points with $Y=0$)
and the set of elliptic points (which is empty).
\end{ese}

\printbibliography

\section*{Appendix}\label{appendix: almost maximal}

In \Cref{tab: many modaut} we list all the modular curves $X_H$ with level $n\le 39$ and genus  $g \le 311$ with ``many'' modular automorphisms, i.e. satisfying \Cref{eq: many modaut}. Notice that this condition implies that all the automorphisms are modular. The curves in bold are the ones for which we could already prove that all the automorphisms are modular  because they satisfy the hypothesis of \Cref{th:highgenus}.

\begin{longtable}{c c p{0.8\textwidth}} 
\caption{Curves with many modular automorphisms (see \eqref{eq: many modaut})}
\label{tab: many modaut}\\                        
\hline
\textbf{genus} & $\mathbf{|\mathrm{MAut}(X_H)|}$ &  \textbf{Curves (LMFDB label)} \\
\hline
\\
\endhead                                                       
\hline
\endfoot            
\\
\hline
\\
2 & 48 &  8.48.2.a.1,\ 16.48.2.a.1,\ 16.48.2.b.1,\ 16.48.2.e.1,\ 16.48.2.f.1, 16.48.2.l.1,\ 16.48.2.l.2,\ 16.48.2.m.1,\ 16.48.2.m.2,\ 24.48.2.a.1, 32.48.2.a.1,\ 32.48.2.a.2  \\ 
3 & 96 &  8.96.3.a.1,\ 8.96.3.b.1,\ 8.96.3.e.1,\ 8.96.3.g.1,\ 8.96.3.g.2, 8.96.3.k.1,\ 8.96.3.l.1,\ 8.96.3.m.1,\ 8.96.3.n.1,\ 8.96.3.o.1, 8.96.3.p.1,\ 16.96.3.cq.1,\ 16.96.3.cw.1,\ 16.96.3.cy.1,\ 24.96.3.a.1, 24.96.3.b.1,\ 24.96.3.c.1,\ 24.96.3.dc.1,\ 24.96.3.dd.1,\ 24.96.3.eu.1, 24.96.3.ev.1,\ 24.96.3.ew.1,\ 24.96.3.ex.1,\ 24.96.3.ic.1,\ 24.96.3.id.1, 24.96.3.ie.1,\ 24.96.3.if.1,\ 24.96.3.ig.1,\ 24.96.3.ih.1,\ 24.96.3.ii.1, 24.96.3.ij.1,\ 24.96.3.ik.1,\ 24.96.3.il.1,\ 24.96.3.im.1,\ 24.96.3.in.1, 24.96.3.io.1,\ 24.96.3.ip.1,\ 24.96.3.iq.1,\ 24.96.3.ir.1,\ 24.96.3.o.1, 24.96.3.o.2,\ 24.96.3.q.1,\ 24.96.3.q.2,\ 24.96.3.r.1,\ 24.96.3.r.2, 24.96.3.r.3,\ 32.96.3.a.1,\ 32.96.3.a.2,\ 32.96.3.d.1,\ 32.96.3.d.2, 32.96.3.e.1,\ 32.96.3.g.1,\ 32.96.3.j.1,\ 32.96.3.j.2,\ 32.96.3.k.1, 32.96.3.m.1,\ 32.96.3.o.1,\ 32.96.3.p.1 \\ 
3 & 168 &  7.168.3.a.1,\ 7.168.3.b.1,\ 21.168.3.a.1,\ 28.168.3.a.1,\ 35.168.3.a.1  \\ 
4 & 72 &  12.72.4.a.1,\ 12.72.4.b.1,\ 12.72.4.ba.1,\ 12.72.4.c.1,\ 12.72.4.d.1, 12.72.4.e.1,\ 12.72.4.p.1,\ 12.72.4.q.1,\ 12.72.4.v.1,\ 12.72.4.w.1, 12.72.4.z.1,\ 24.72.4.a.1,\ 24.72.4.b.1,\ 24.72.4.c.1,\ 24.72.4.cu.1, 24.72.4.cv.1,\ 24.72.4.cw.1,\ 24.72.4.cx.1,\ 24.72.4.d.1,\ 24.72.4.dw.1, 24.72.4.dx.1,\ 24.72.4.e.1,\ 24.72.4.ea.1,\ 24.72.4.eb.1,\ 24.72.4.ec.1, 24.72.4.ed.1,\ 24.72.4.f.1,\ 24.72.4.g.1,\ 24.72.4.h.1,\ 24.72.4.i.1, 24.72.4.j.1,\ 24.72.4.jq.1,\ 24.72.4.jr.1,\ 24.72.4.ju.1,\ 24.72.4.jv.1, 24.72.4.jw.1,\ 24.72.4.jx.1,\ 24.72.4.ko.1,\ 24.72.4.kp.1,\ 24.72.4.ks.1, 24.72.4.kt.1,\ 24.72.4.ku.1,\ 24.72.4.kv.1,\ 36.72.4.g.1  \\ 
4 & 120 & 10.180.4.a.1,\ 10.180.4.b.1,\ 20.180.4.a.1,\ 20.180.4.b.1,\ 30.180.4.a.1, 30.180.4.b.1  \\ 
5 & 120 &  10.120.5.c.1,\ 10.120.5.d.1,\ 10.120.5.e.1,\ 10.120.5.h.1,\ 20.120.5.bl.1, 20.120.5.bm.1,\ 20.120.5.bn.1,\ 20.120.5.o.1,\ 20.120.5.p.1,\ 20.120.5.q.1, 20.120.5.r.1,\ 20.120.5.s.1,\ 20.120.5.t.1,\ 20.120.5.u.1,\ 20.120.5.v.1, 20.120.5.w.1,\ 30.120.5.c.1,\ 30.120.5.d.1,\ 30.120.5.e.1,\ 30.120.5.f.1, 30.120.5.i.1,\ 30.120.5.l.1,\ 30.120.5.m.1,\ 30.120.5.n.1,\ 30.120.5.o.1, 30.120.5.s.1,\ 30.120.5.u.1,\ 30.120.5.x.1  \\ 
5 & 192 &  8.192.5.a.1,\ 8.192.5.b.1,\ 8.192.5.b.2,\ 8.192.5.b.3,\ 8.192.5.c.1, 8.192.5.d.1,\ 8.192.5.d.2,\ 8.192.5.d.3,\ 16.192.5.bq.1,\ 16.192.5.br.1, 16.192.5.bz.1,\ 16.192.5.bz.2,\ 16.192.5.cf.1,\ 16.192.5.cg.1,\ 16.192.5.cj.1, 16.192.5.ck.1,\ 24.192.5.ba.1,\ 24.192.5.ba.2,\ 24.192.5.ba.3,\ 24.192.5.ba.4, 24.192.5.bb.1,\ 24.192.5.bc.1,\ 24.192.5.bd.1,\ 24.192.5.bd.2,\ 24.192.5.be.1, 24.192.5.bf.1,\ 24.192.5.bg.1,\ 24.192.5.bg.2,\ 24.192.5.bh.1,\ 24.192.5.bh.2, 24.192.5.bi.1,\ 24.192.5.bi.2,\ 24.192.5.bj.1,\ 24.192.5.bj.2,\ 24.192.5.bj.3, 24.192.5.bj.4,\ 24.192.5.i.1,\ 24.192.5.j.1,\ 24.192.5.k.1,\ 24.192.5.k.2, 24.192.5.l.1,\ 24.192.5.l.2,\ 24.192.5.m.1,\ 24.192.5.m.2,\ 24.192.5.n.1, 24.192.5.n.2,\ 24.192.5.n.3,\ 24.192.5.n.4,\ 24.192.5.o.1,\ 24.192.5.p.1, 24.192.5.p.2,\ 24.192.5.q.1,\ 24.192.5.q.2,\ 24.192.5.r.1,\ 24.192.5.r.2, 24.192.5.r.3,\ 24.192.5.s.1,\ 24.192.5.s.2,\ 24.192.5.s.3,\ 24.192.5.s.4, 24.192.5.t.1,\ 24.192.5.u.1,\ 24.192.5.v.1,\ 24.192.5.v.2,\ 24.192.5.w.1, 24.192.5.x.1,\ 24.192.5.x.2,\ 24.192.5.y.1,\ 24.192.5.y.2,\ 24.192.5.z.1, 24.192.5.z.2,\ 24.192.5.z.3,\ 32.192.5.ba.1,\ 32.192.5.ba.2,\ 32.192.5.bb.1, 32.192.5.bb.2,\ 32.192.5.c.1,\ 32.192.5.c.2,\ 32.192.5.c.3,\ 32.192.5.c.4, 32.192.5.d.1,\ 32.192.5.d.2,\ 32.192.5.d.3,\ 32.192.5.d.4,\ 32.192.5.e.1, 32.192.5.e.2,\ 32.192.5.e.3,\ 32.192.5.e.4,\ 32.192.5.f.1,\ 32.192.5.f.2, 32.192.5.g.1,\ 32.192.5.g.2,\ 32.192.5.j.1,\ 32.192.5.j.2,\ 32.192.5.k.1, 32.192.5.k.2,\ 32.192.5.v.1,\ 32.192.5.v.2,\ 32.192.5.v.3,\ 32.192.5.v.4, 32.192.5.x.1,\ 32.192.5.x.2,\ 32.192.5.x.3,\ 32.192.5.x.4,\ 32.192.5.y.1, 32.192.5.y.2,\ 32.192.5.z.1,\ 32.192.5.z.2  \\ 
9 & 192 &  24.288.9.a.1,\ 24.288.9.b.1,\ 24.288.9.b.2,\ 24.288.9.ba.2,\ 24.288.9.bb.1, 24.288.9.bb.2,\ 24.288.9.be.1,\ 24.288.9.bf.1,\ 24.288.9.bf.2,\ 24.288.9.bg.1, 24.288.9.bg.2,\ 24.288.9.bh.1,\ 24.288.9.bi.1,\ 24.288.9.bi.2,\ 24.288.9.bj.1, 24.288.9.bj.2,\ 24.288.9.bk.1,\ 24.288.9.bk.2,\ 24.288.9.c.1,\ 24.288.9.c.2, 24.288.9.d.1,\ 24.288.9.d.2,\ 24.288.9.e.1,\ 24.288.9.e.2,\ 24.288.9.f.1, 24.288.9.f.2,\ 24.288.9.g.1,\ 24.288.9.g.2,\ 24.288.9.h.1,\ 24.288.9.i.1, 24.288.9.i.2,\ 24.288.9.j.1,\ 24.288.9.j.2,\ 24.288.9.k.1,\ 24.288.9.l.1, 24.288.9.l.2,\ 24.288.9.m.1,\ 24.288.9.n.1,\ 24.288.9.n.2,\ 24.288.9.o.1, 24.288.9.p.1,\ 24.288.9.p.2,\ 24.288.9.q.1,\ 24.288.9.r.1,\ 24.288.9.r.2, 24.288.9.s.1,\ 24.288.9.t.1,\ 24.288.9.u.2,\ 24.288.9.v.1,\ 24.288.9.x.1  \\ 
10 & 324 & {\bf 9.324.10.b.1},\  9.324.10.a.1,\ 36.324.10.a.1  \\ 
13 & 288 &  12.288.13.a.1,\ 12.288.13.b.1,\ 12.288.13.ba.1,\ 12.288.13.bb.1,\ 12.288.13.bc.1, 12.288.13.bd.1,\ 12.288.13.c.1,\ 12.288.13.d.1,\ 12.288.13.g.1,\ 12.288.13.h.1, 12.288.13.i.1,\ 12.288.13.j.1,\ 12.288.13.k.1,\ 12.288.13.l.1,\ 12.288.13.r.1, 12.288.13.t.1,\ 12.288.13.u.1,\ 12.288.13.v.1,\ 12.288.13.w.1,\ 12.288.13.x.1, 12.288.13.y.1,\ 12.288.13.z.1,\ 24.288.13.a.1,\ 24.288.13.b.1,\ 24.288.13.c.1, 24.288.13.ci.1,\ 24.288.13.cj.1,\ 24.288.13.ck.1,\ 24.288.13.cl.1,\ 24.288.13.cm.1, 24.288.13.cn.1,\ 24.288.13.co.1,\ 24.288.13.cp.1,\ 24.288.13.cq.1,\ 24.288.13.cr.1, 24.288.13.cs.1,\ 24.288.13.ct.1,\ 24.288.13.cu.1,\ 24.288.13.cv.1,\ 24.288.13.cw.1, 24.288.13.cx.1,\ 24.288.13.cy.1,\ 24.288.13.cz.1,\ 24.288.13.d.1,\ 24.288.13.da.1, 24.288.13.db.1,\ 24.288.13.dc.1,\ 24.288.13.dd.1,\ 24.288.13.de.1,\ 24.288.13.df.1, 24.288.13.dg.1,\ 24.288.13.dh.1,\ 24.288.13.di.1,\ 24.288.13.dj.1,\ 24.288.13.dk.1, 24.288.13.dl.1,\ 24.288.13.dm.1,\ 24.288.13.dn.1,\ 24.288.13.do.1,\ 24.288.13.dp.1, 24.288.13.e.1,\ 24.288.13.f.1,\ 24.288.13.fj.1,\ 24.288.13.fk.1,\ 24.288.13.fl.1, 24.288.13.fm.1,\ 24.288.13.fq.1,\ 24.288.13.fr.1,\ 24.288.13.fs.1,\ 24.288.13.ft.1, 24.288.13.fz.1,\ 24.288.13.g.1,\ 24.288.13.ga.1,\ 24.288.13.gb.1,\ 24.288.13.gc.1, 24.288.13.gg.1,\ 24.288.13.gh.1,\ 24.288.13.gi.1,\ 24.288.13.gj.1,\ 24.288.13.h.1, 24.288.13.hz.1,\ 24.288.13.i.1,\ 24.288.13.id.1,\ 24.288.13.ie.1,\ 24.288.13.if.1, 24.288.13.ig.1,\ 24.288.13.ih.1,\ 24.288.13.ik.1,\ 24.288.13.il.1,\ 24.288.13.im.1, 24.288.13.in.1,\ 24.288.13.io.1,\ 24.288.13.ip.1,\ 24.288.13.iq.1,\ 24.288.13.ir.1, 24.288.13.j.1,\ 24.288.13.jq.1,\ 24.288.13.jr.1,\ 24.288.13.js.1,\ 24.288.13.jt.1, 24.288.13.ju.1,\ 24.288.13.jv.1,\ 24.288.13.jw.1,\ 24.288.13.jx.1,\ 24.288.13.jy.1, 24.288.13.jz.1,\ 24.288.13.k.1,\ 24.288.13.ka.1,\ 24.288.13.kb.1,\ 24.288.13.kc.1, 24.288.13.kd.1,\ 24.288.13.l.1,\ 24.288.13.m.1,\ 24.288.13.n.1,\ 24.288.13.o.1, 24.288.13.p.1,\ 24.288.13.qk.1,\ 24.288.13.ql.1,\ 24.288.13.qm.1,\ 24.288.13.qn.1, 24.288.13.qo.1,\ 24.288.13.qp.1,\ 24.288.13.qq.1,\ 24.288.13.qr.1,\ 24.288.13.qs.1, 24.288.13.qt.1,\ 24.288.13.qu.1,\ 24.288.13.qv.1,\ 24.288.13.qw.1,\ 24.288.13.qx.1, 24.288.13.rg.1,\ 24.288.13.rh.1,\ 24.288.13.ri.1,\ 24.288.13.rj.1,\ 24.288.13.rk.1, 24.288.13.rl.1,\ 24.288.13.rm.1,\ 24.288.13.rn.1,\ 24.288.13.ro.1,\ 24.288.13.rp.1, 24.288.13.rq.1,\ 24.288.13.rr.1,\ 24.288.13.rs.1,\ 24.288.13.rt.1,\ 24.288.13.ru.1, 24.288.13.rv.1,\ 24.288.13.rw.1,\ 24.288.13.rx.1,\ 24.288.13.ry.1,\ 24.288.13.rz.1, 24.288.13.sa.1,\ 24.288.13.sb.1,\ 24.288.13.sc.1,\ 24.288.13.sd.1,\ 24.288.13.se.1, 24.288.13.sf.1,\ 24.288.13.sg.1,\ 24.288.13.sh.1,\ 24.288.13.sy.1,\ 24.288.13.sz.1, 24.288.13.ta.1,\ 24.288.13.tb.1,\ 24.288.13.tc.1,\ 24.288.13.td.1,\ 24.288.13.te.1, 24.288.13.tf.1,\ 24.288.13.tg.1,\ 24.288.13.th.1,\ 24.288.13.ti.1,\ 24.288.13.tj.1, 24.288.13.tk.1,\ 24.288.13.tl.1,\ 36.288.13.a.1,\ 36.288.13.b.1,\ 36.288.13.c.1, 36.288.13.d.1,\ 36.288.13.e.1,\ 36.288.13.m.1,\ 36.288.13.n.1,\ 36.288.13.p.1, 36.288.13.q.1,\ 36.288.13.r.1,\ 36.288.13.s.1,\ 36.288.13.t.1,\ 36.288.13.u.1, 36.288.13.v.1 \\ 
13 & 360 & 10.360.13.a.1,\ 10.360.13.b.1,\ 10.360.13.c.1,\ 10.360.13.d.1,\ 20.360.13.a.1, 20.360.13.b.1,\ 20.360.13.ba.1,\ 20.360.13.bb.1,\ 20.360.13.c.1,\ 20.360.13.d.1, 20.360.13.e.1,\ 20.360.13.f.1,\ 20.360.13.g.1,\ 20.360.13.h.1,\ 20.360.13.i.1, 20.360.13.j.1,\ 20.360.13.k.1,\ 20.360.13.l.1,\ 20.360.13.m.1,\ 20.360.13.n.1, 20.360.13.o.1,\ 20.360.13.p.1,\ 20.360.13.q.1,\ 20.360.13.r.1,\ 20.360.13.s.1, 20.360.13.t.1,\ 20.360.13.u.1,\ 20.360.13.v.1,\ 20.360.13.w.1,\ 20.360.13.x.1, 20.360.13.y.1,\ 20.360.13.z.1,\ 30.360.13.a.1,\ 30.360.13.b.1,\ 30.360.13.c.1, 30.360.13.d.1,\ 30.360.13.e.1,\ 30.360.13.f.1,\ 30.360.13.g.1,\ 30.360.13.h.1, 30.360.13.i.1,\ 30.360.13.j.1,\ 30.360.13.k.1,\ 30.360.13.l.1 \\ 
21 & 384 &  16.384.21.a.1,\ 16.384.21.a.2,\ 16.384.21.bf.1,\ 16.384.21.bg.1,\ 16.384.21.bk.1, 16.384.21.bl.1,\ 16.384.21.bn.1,\ 16.384.21.bo.1,\ 16.384.21.bq.1,\ 16.384.21.br.1, 16.384.21.k.1,\ 16.384.21.k.2,\ 16.384.21.k.3,\ 16.384.21.m.1,\ 16.384.21.p.1, 16.384.21.t.1,\ 16.384.21.t.2,\ 32.384.21.if.1,\ 32.384.21.iq.1,\ 32.384.21.iq.2,\ {\bf 32.384.21.eu.1, 32.384.21.ev.1, 32.384.21.js.1, 32.384.21.js.2.}\\ 
25 & 576 & 12.576.25.a.1,\ 12.576.25.b.1,\ 12.576.25.c.1,\ 12.576.25.d.1,\ 12.576.25.e.1, 12.576.25.e.2,\ 12.576.25.f.1,\ 12.576.25.g.1,\ 24.576.25.a.1,\ 24.576.25.b.1, 24.576.25.bg.1,\ 24.576.25.bh.1,\ 24.576.25.bi.1,\ 24.576.25.bj.1,\ 24.576.25.bj.2, 24.576.25.bk.1,\ 24.576.25.bk.2,\ 24.576.25.bl.1,\ 24.576.25.bm.1,\ 24.576.25.bn.1, 24.576.25.bo.1,\ 24.576.25.bo.2,\ 24.576.25.bp.1,\ 24.576.25.bp.2,\ 24.576.25.bq.1, 24.576.25.bq.2,\ 24.576.25.br.1,\ 24.576.25.br.2,\ 24.576.25.bs.1,\ 24.576.25.bs.2, 24.576.25.bt.1,\ 24.576.25.bt.2,\ 24.576.25.bu.1,\ 24.576.25.bu.2,\ 24.576.25.bv.1, 24.576.25.bv.2,\ 24.576.25.bw.1,\ 24.576.25.bw.2,\ 24.576.25.c.1,\ 24.576.25.cz.1, 24.576.25.d.1,\ 24.576.25.da.1,\ 24.576.25.db.1,\ 24.576.25.dc.1,\ 24.576.25.dd.1, 24.576.25.de.1,\ 24.576.25.e.1,\ 24.576.25.f.1,\ 24.576.25.f.2,\ 24.576.25.g.1, 24.576.25.h.1,\ 24.576.25.h.2,\ 36.576.25.a.1,\ 36.576.25.a.2,\ 36.576.25.b.1, 36.576.25.b.2,\ 36.576.25.c.1,\ 36.576.25.c.2,\ 36.576.25.d.1,\ 36.576.25.d.2, 36.576.25.e.1,\ 36.576.25.e.2,\ 36.576.25.e.3,\ 36.576.25.e.4,\ 36.576.25.f.1, 36.576.25.f.2,\ 36.576.25.g.1,\ 36.576.25.g.2,\ 36.576.25.h.1,\ 36.576.25.i.1, 36.576.25.j.1,\ 36.576.25.j.2,\ 36.576.25.k.1,\ 36.576.25.k.2;\\
26 & 660 & {\bf 11.660.26.a.1, 11.660.26.b.1, 11.660.26.c.1, 33.660.26.a.1 } \\
31 & 504 & 21.504.31.g.1,\ 21.504.31.h.1,\ 21.504.31.k.1;\\
41 & 768 & 16.768.41.g.1,\ 16.768.41.g.2,\ 16.768.41.g.3,\ 16.768.41.g.4,\ 16.768.41.h.1, 16.768.41.h.2,\ 16.768.41.h.3,\ 16.768.41.h.4,\ 16.768.41.p.1,\ 16.768.41.q.1, 16.768.41.r.1,\ 16.768.41.s.1,\ 16.768.41.t.1,\ 16.768.41.u.1,\ 32.768.41.fb.1, 32.768.41.fb.2,\ 32.768.41.fd.1,\ 32.768.41.fd.2;\\
45 & 672 & {\bf 28.672.45.ba.1, 28.672.45.bb.1, 28.672.45.bc.1, 28.672.45.x.1, 28.672.45.y.1, 28.672.45.z.1 } \\
49 & 1008 & {\bf 14.1008.49.a.1, 14.1008.49.b.1, 14.1008.49.c.1, 14.1008.49.d.1, 14.1008.49.e.1,\ 14.1008.49.f.1, 28.1008.49.a.1, 28.1008.49.b.1, 28.1008.49.c.1, 28.1008.49.d.1,\ 28.1008.49.e.1, 28.1008.49.f.1, 28.1008.49.g.1, 28.1008.49.h.1, 28.1008.49.i.1,\ 28.1008.49.j.1, 28.1008.49.k.1, 28.1008.49.l.1, 28.1008.49.m.1, 28.1008.49.n.1,\ 28.1008.49.o.1, 28.1008.49.p.1, 28.1008.49.q.1, 28.1008.49.r.1, 28.1008.49.s.1,\ 28.1008.49.t.1, 28.1008.49.u.1} \\
50 & 1092 & {\bf 13.1092.50.a.1, 13.1092.50.b.1, 13.1092.50.c.1, 13.1092.50.d.1, 39.1092.50.a.1,\   39.1092.50.b.1, 39.1092.50.c.1, 39.1092.50.d.1 } \\
73 & 1152 & 24.1152.73.cb.1,\ 24.1152.73.cb.2,\ 24.1152.73.ce.1,\ 24.1152.73.ce.2,\ 24.1152.73.db.1, 24.1152.73.db.2,\ 24.1152.73.de.1,\ 24.1152.73.de.2,\ {\bf 24.1152.73.a.1, 24.1152.73.b.1, 24.1152.73.baa.1, 24.1152.73.bab.1, 24.1152.73.bac.1,\ 24.1152.73.bad.1, 24.1152.73.bae.1, 24.1152.73.baf.1, 24.1152.73.bag.1, 24.1152.73.bah.1,\ 24.1152.73.bai.1, 24.1152.73.baj.1, 24.1152.73.bao.1, 24.1152.73.bap.1, 24.1152.73.baq.1,\ 24.1152.73.bar.1, 24.1152.73.bas.1, 24.1152.73.bat.1, 24.1152.73.bau.1, 24.1152.73.bav.1,\ 24.1152.73.baw.1, 24.1152.73.bax.1, 24.1152.73.bay.1, 24.1152.73.baz.1, 24.1152.73.bba.1,\ 24.1152.73.bbb.1, 24.1152.73.bbc.1, 24.1152.73.bbd.1, 24.1152.73.bbe.1, 24.1152.73.bbf.1,\ 24.1152.73.bbg.1, 24.1152.73.bbh.1, 24.1152.73.bbi.1, 24.1152.73.bbj.1, 24.1152.73.bbk.1,\ 24.1152.73.bbl.1, 24.1152.73.bbm.1, 24.1152.73.bbn.1, 24.1152.73.bbo.1, 24.1152.73.bbp.1,\ 24.1152.73.bbq.1, 24.1152.73.bbr.1, 24.1152.73.bbs.1, 24.1152.73.bbt.1, 24.1152.73.bbu.1,\ 24.1152.73.bbv.1, 24.1152.73.bbw.1, 24.1152.73.bbx.1, 24.1152.73.bby.1, 24.1152.73.bbz.1,\ 24.1152.73.bca.1, 24.1152.73.bcb.1, 24.1152.73.bcc.1, 24.1152.73.bcd.1, 24.1152.73.bce.1,\ 24.1152.73.bcf.1, 24.1152.73.bcg.1, 24.1152.73.bch.1, 24.1152.73.bci.1, 24.1152.73.bcj.1,\ 24.1152.73.by.1, 24.1152.73.cg.1, 24.1152.73.cg.2, 24.1152.73.cp.1, 24.1152.73.cp.2,\ 24.1152.73.cr.1, 24.1152.73.cr.2, 24.1152.73.ct.1, 24.1152.73.ct.2, 24.1152.73.ct.3,\ 24.1152.73.dx.1, 24.1152.73.eb.1, 24.1152.73.eb.2, 24.1152.73.ed.1, 24.1152.73.ed.2,\ 24.1152.73.ef.1, 24.1152.73.ef.2, 24.1152.73.eh.1, 24.1152.73.eh.2, 24.1152.73.eh.3,\ 24.1152.73.f.1, 24.1152.73.g.1, 24.1152.73.h.1, 24.1152.73.l.1, 24.1152.73.m.1,\ 24.1152.73.n.1, 24.1152.73.o.1, 24.1152.73.p.1, 24.1152.73.q.1, 24.1152.73.r.1,\ 24.1152.73.rr.1, 24.1152.73.rv.1, 24.1152.73.rv.2, 24.1152.73.rx.1, 24.1152.73.rx.2,\ 24.1152.73.rz.1, 24.1152.73.rz.2, 24.1152.73.s.1, 24.1152.73.sb.1, 24.1152.73.sb.2,\ 24.1152.73.sb.3, 24.1152.73.t.1, 24.1152.73.tp.1, 24.1152.73.tt.1, 24.1152.73.tt.2,\ 24.1152.73.tv.1, 24.1152.73.tv.2, 24.1152.73.tx.1, 24.1152.73.tx.2, 24.1152.73.tz.1,\ 24.1152.73.tz.2, 24.1152.73.tz.3, 24.1152.73.u.1, 24.1152.73.v.1, 24.1152.73.w.1,\ 24.1152.73.wm.1, 24.1152.73.wn.1, 24.1152.73.wp.1, 24.1152.73.wq.1, 24.1152.73.ws.1,\ 24.1152.73.wt.1, 24.1152.73.wu.1, 24.1152.73.wv.1, 24.1152.73.wy.1, 24.1152.73.wz.1,\ 24.1152.73.x.1, 24.1152.73.xa.1, 24.1152.73.xb.1, 24.1152.73.xc.1, 24.1152.73.xd.1,\ 24.1152.73.xe.1, 24.1152.73.xf.1, 24.1152.73.xk.1, 24.1152.73.xl.1, 24.1152.73.xo.1,\ 24.1152.73.xp.1, 24.1152.73.xq.1, 24.1152.73.xr.1, 24.1152.73.xs.1, 24.1152.73.xt.1,\ 24.1152.73.xw.1, 24.1152.73.xx.1, 24.1152.73.xy.1, 24.1152.73.xz.1, 24.1152.73.y.1,\ 24.1152.73.ya.1, 24.1152.73.yb.1, 24.1152.73.yc.1, 24.1152.73.yd.1, 24.1152.73.ye.1,\ 24.1152.73.yf.1, 24.1152.73.yg.1, 24.1152.73.yh.1, 24.1152.73.yi.1, 24.1152.73.yj.1,\ 24.1152.73.yk.1, 24.1152.73.yl.1, 24.1152.73.ym.1, 24.1152.73.yn.1, 24.1152.73.yo.1,\ 24.1152.73.yp.1, 24.1152.73.yq.1, 24.1152.73.yr.1, 24.1152.73.ys.1, 24.1152.73.yt.1,\ 24.1152.73.z.1, 24.1152.73.zu.1, 24.1152.73.zv.1, 24.1152.73.zw.1, 24.1152.73.zx.1,\ 24.1152.73.zy.1, 24.1152.73.zz.1.}\\ 
73 & 1440 & {\bf 15.1440.73.a.1, 15.1440.73.b.1. }\\
81 & 1320 & {\bf 22.1320.81.b.1, 22.1320.81.c.1, 22.1320.81.d.1, 22.1320.81.e.1, 22.1320.81.f.1, 22.1320.81.h.1 } \\
81 & 1536 & {\bf 16.1536.81.a.1, 16.1536.81.a.2, 16.1536.81.a.3, 16.1536.81.a.4, 16.1536.81.b.1,\ 16.1536.81.b.2, 16.1536.81.b.3, 16.1536.81.b.4, 16.1536.81.c.1, 16.1536.81.c.2,\ 16.1536.81.c.3, 16.1536.81.c.4, 16.1536.81.d.1, 16.1536.81.d.2, 16.1536.81.d.3,\ 16.1536.81.d.4, 32.1536.81.de.1, 32.1536.81.df.1, 32.1536.81.dq.1, 32.1536.81.dr.1,\ 32.1536.81.ds.1, 32.1536.81.ds.2, 32.1536.81.ds.3, 32.1536.81.ds.4, 32.1536.81.eb.1,\ 32.1536.81.ec.1}\\
106 & 1320 & {\bf  22.1980.106.a.1, 22.1980.106.b.1, 22.1980.106.c.1. }\\
121 & 2016 & {\bf  21.2016.121.a.1, 21.2016.121.b.1, 21.2016.121.c.1, 21.2016.121.d.1, 21.2016.121.e.1,\  21.2016.121.f.1 }\\
133 & 2016 & {\bf  28.2016.133.bc.1, 28.2016.133.bd.1, 28.2016.133.be.1, 28.2016.133.bf.1, 28.2016.133.bg.1,\    28.2016.133.bh.1, 28.2016.133.e.1, 28.2016.133.l.1, 28.2016.133.m.1, 28.2016.133.t.1,\  28.2016.133.v.1, 28.2016.133.w.1 }\\
133 & 2448 & {\bf  17.2448.133.a.1, 17.2448.133.b.1, 17.2448.133.c.1, 17.2448.133.d.1, 17.2448.133.e.1 }\\
141 & 2184 & {\bf  26.2184.141.c.1, 26.2184.141.d.1, 26.2184.141.f.1, 26.2184.141.g.1, 26.2184.141.i.1,\   26.2184.141.j.1, 26.2184.141.l.1, 26.2184.141.n.1}\\
196 & 3420 & {\bf  19.3420.196.a.1, 19.3420.196.b.1, 19.3420.196.c.1, 19.3420.196.d.1, 19.3420.196.e.1. }\\
209 & 3072 & {\bf  32.3072.209.br.1, 32.3072.209.br.2, 32.3072.209.br.3, 32.3072.209.br.4, 32.3072.209.bv.1,\ 32.3072.209.bv.2, 32.3072.209.bv.3, 32.3072.209.bv.4, 32.3072.209.cl.1, 32.3072.209.cn.1,\ 32.3072.209.co.1, 32.3072.209.cr.1, 32.3072.209.u.1, 32.3072.209.u.2, 32.3072.209.u.3,\ 32.3072.209.u.4.}\\
\hline
\end{longtable}

In \Cref{tab: non-modular aut} we list all modular curves of level $n\le 39$ that have a hyperelliptic model in the LMFDB and admit a non-modular automorphism.

\begin{longtable}{c c p{0.8\textwidth}} 
\caption{Hyperelliptic curves with non-modular automorphisms}
\label{tab: non-modular aut}
\\
\hline
$\frac{|\mathrm{Aut}(X_H)|}{|\mathrm{MAut}(X_H)|}$ & Level & Curves (LMFDB label) \\
\hline
\\
\endhead                                                        
\hline
\endfoot            
	\\
	\hline
	\\
	2 & 10 & 10.30.2.a.1,\ 10.30.2.b.1,\ 10.30.2.c.1,\ 10.30.2.d.1,\ 10.60.2.d.1,\ 10.60.2.e.1\\
	& 12 & 12.24.2.a.1,\ 12.24.2.b.1,\ 12.24.2.c.1,\ 12.24.2.d.1,\ 12.36.2.a.1,\ 12.36.2.b.1,\ 12.36.2.bu.1,\ 12.36.2.bv.1,\ 12.36.2.bw.1,\ 12.36.2.bx.1,\ 12.36.2.c.1,\ 12.36.2.d.1,\ 12.36.2.i.1,\ 12.36.2.j.1,\ 12.36.2.o.1,\ 12.36.2.p.1,\ 12.36.2.q.1,\ 12.36.2.r.1,\ 12.36.2.s.1,\ 12.36.2.t.1,\ 12.48.3.c.1,\ 12.48.3.f.1,\ 12.48.3.h.1,\ 12.48.3.i.1,\ 12.48.3.l.1,\ 12.48.3.m.1,\ 12.48.3.r.1,\ 12.48.3.t.1,\ 12.72.3.a.1,\ 12.72.3.ba.1,\ 12.72.3.bb.1,\ 12.72.3.bd.1,\ 12.72.3.bf.1,\ 12.72.3.bg.1,\ 12.72.3.bi.1,\ 12.72.3.bj.1,\ 12.72.3.bk.1,\ 12.72.3.bn.1,\ 12.72.3.bo.1,\ 12.72.3.br.1,\ 12.72.3.bs.1,\ 12.72.3.bv.1,\ 12.72.3.bx.1,\ 12.72.3.by.1,\ 12.72.3.c.1,\ 12.72.3.ci.1,\ 12.72.3.cl.1,\ 12.72.3.cm.1,\ 12.72.3.cn.1,\ 12.72.3.co.1,\ 12.72.3.cp.1,\ 12.72.3.dg.1,\ 12.72.3.dh.1,\ 12.72.3.di.1,\ 12.72.3.dj.1,\ 12.72.3.f.1,\ 12.72.3.g.1,\ 12.72.3.i.1,\ 12.72.3.j.1,\ 12.72.3.k.1,\ 12.72.3.m.1,\ 12.72.3.o.1,\ 12.72.3.u.1,\ 12.72.3.x.1\\
	& 15 & 15.30.2.a.1,\ 15.30.2.b.1,\ 15.30.2.c.1,\ 15.30.2.d.1,\ 15.60.2.a.1,\ 15.60.2.b.1,\ 15.60.2.c.1,\ 15.60.2.d.1\\
	& 16 & 16.96.2.e.1,\ 16.96.2.e.2,\ 16.96.2.f.1,\ 16.96.2.f.2,\ 16.96.2.g.1,\ 16.96.2.g.2,\ 16.96.2.h.1,\ 16.96.2.h.2,\ 16.48.3.bm.1,\ 16.48.3.bn.1,\ 16.48.3.bo.1,\ 16.48.3.bq.1,\ 16.48.3.br.1,\ 16.48.3.bs.1,\ 16.48.3.bx.1,\ 16.48.3.bz.1,\ 16.48.3.ca.1,\ 16.48.3.cb.1,\ 16.48.3.cd.1,\ 16.48.3.ce.1,\ 16.96.3.ey.1,\ 16.96.3.ey.2,\ 16.96.3.fa.1,\ 16.96.3.fa.2\\
	& 18 & 18.24.2.a.1,\ 18.24.2.b.1,\ 18.24.2.c.1,\ 18.72.2.a.1,\ 18.72.2.c.1,\ 18.72.2.c.2\\
	& 20 & 20.30.2.a.1,\ 20.30.2.b.1,\ 20.30.2.c.1,\ 20.30.2.d.1,\ 20.30.2.e.1,\ 20.30.2.f.1,\ 20.30.2.g.1,\ 20.30.2.h.1,\ 20.40.2.a.1,\ 20.40.2.b.1,\ 20.40.2.c.1,\ 20.40.2.d.1,\ 20.40.2.e.1,\ 20.60.2.d.1,\ 20.60.2.e.1,\ 20.60.2.f.1,\ 20.60.2.g.1,\ 20.60.2.h.1,\ 20.60.2.i.1,\ 20.72.3.a.1,\ 20.72.3.b.1,\ 20.72.3.bc.1,\ 20.72.3.be.1,\ 20.72.3.bf.1,\ 20.72.3.c.1,\ 20.72.3.d.1,\ 20.72.3.k.1,\ 20.72.3.l.1,\ 20.72.3.m.1,\ 20.72.3.o.1,\ 20.72.3.p.1,\ 20.80.3.b.1,\ 20.80.3.d.1\\
	& 21 & 21.24.2.a.1,\ 21.48.3.a.1,\ 21.48.3.b.1,\ 21.48.3.c.1,\ 21.48.3.d.1\\
	& 24 & 24.24.2.a.1,\ 24.24.2.b.1,\ 24.24.2.c.1,\ 24.24.2.d.1,\ 24.36.2.ci.1,\ 24.36.2.ck.1,\ 24.36.2.cl.1,\ 24.36.2.cu.1,\ 24.36.2.cv.1,\ 24.36.2.cw.1,\ 24.36.2.cx.1,\ 24.72.2.hk.1,\ 24.72.2.hk.2,\ 24.72.2.hl.1,\ 24.72.2.hl.2,\ 24.72.2.hm.1,\ 24.72.2.hm.2,\ 24.72.2.hn.1,\ 24.72.2.hn.2,\ 24.48.3.ba.1,\ 24.48.3.bb.1,\ 24.48.3.bc.1,\ 24.48.3.be.1,\ 24.48.3.bf.1,\ 24.48.3.bj.1,\ 24.48.3.bm.1,\ 24.48.3.bn.1,\ 24.48.3.bo.1,\ 24.48.3.bw.1,\ 24.48.3.bx.1,\ 24.48.3.cb.1,\ 24.48.3.cc.1,\ 24.48.3.cd.1,\ 24.48.3.ch.1,\ 24.48.3.i.1,\ 24.48.3.j.1,\ 24.48.3.k.1,\ 24.48.3.m.1,\ 24.48.3.n.1,\ 24.48.3.r.1,\ 24.48.3.s.1,\ 24.48.3.t.1,\ 24.48.3.x.1,\ 24.72.3.nv.1,\ 24.72.3.oe.1,\ 24.72.3.og.1,\ 24.72.3.oh.1,\ 24.72.3.oj.1,\ 24.72.3.om.1,\ 24.72.3.on.1,\ 24.72.3.oo.1,\ 24.72.3.oq.1,\ 24.72.3.ou.1,\ 24.72.3.ow.1,\ 24.72.3.pe.1,\ 24.72.3.pf.1,\ 24.72.3.ph.1,\ 24.72.3.pi.1,\ 24.72.3.pj.1,\ 24.72.3.pl.1,\ 24.72.3.po.1,\ 24.72.3.pp.1,\ 24.72.3.pq.1,\ 24.72.3.py.1\\
	& 25 & 25.50.2.a.1,\ 25.75.2.a.1\\
	& 27 & 27.36.2.b.1\\
	& 28 & 28.32.2.a.1,\ 28.64.3.b.1\\
	& 30 & 30.30.2.a.1,\ 30.30.2.b.1,\ 30.30.2.c.1,\ 30.30.2.d.1,\ 30.30.2.e.1,\ 30.30.2.f.1,\ 30.30.2.g.1,\ 30.30.2.h.1,\ 30.60.2.d.1,\ 30.60.2.f.1\\
	& 32 & 32.48.3.f.1,\ 32.48.3.f.2\\
	& 36 & 36.24.2.a.1,\ 36.24.2.b.1\\
	& 37 & 37.38.2.a.1\\
	& 39 & 39.42.2.a.1\\
	3 &16 & 16.24.2.a.1,\ 16.24.2.a.2,\ 16.24.2.b.1,\ 16.24.2.b.2,\ 16.48.2.c.1,\ 16.48.2.c.2,\ 16.48.2.d.1,\ 16.48.2.d.2,\ 16.48.2.g.1,\ 16.48.2.g.2,\ 16.48.2.h.1,\ 16.48.2.i.1,\ 16.48.2.j.1,\ 16.48.2.k.1,\ 16.48.2.n.1,\ 16.48.2.o.1,\ 16.48.2.p.1,\\
	&24 & 24.96.3.ce.1,\ 24.96.3.cf.1,\ 24.96.3.cg.1,\ 24.96.3.ch.1,\ 24.96.3.ci.1,\ 24.96.3.cj.1,\ 24.96.3.ck.1,\ 24.96.3.cl.1,\ 24.96.3.cp.1,\ 24.96.3.cr.1,\ 24.96.3.cv.1,\ 24.96.3.db.1,\ 24.96.3.df.1,\ 24.96.3.dg.1,\ 24.96.3.dh.1,\ 24.96.3.er.1,\ 24.96.3.es.1,\ 24.96.3.et.1,\ 24.96.3.ez.1,\ 24.96.3.fa.1,\ 24.96.3.fb.1,\ 24.96.3.fc.1,\ 24.96.3.fd.1,\ 24.96.3.ff.1,\ 24.96.3.hp.1,\ 24.96.3.hq.1,\ 24.96.3.hr.1,\ 24.96.3.hy.1,\ 24.96.3.hz.1,\ 24.96.3.ib.1\\
	&32 & 32.48.2.b.1,\ 32.48.2.b.2\\
	4 &11& 11.66.2.a.1\\
	&16 & 16.64.2.a.1\\
	& 30 & 30.60.2.g.1,\ 30.60.2.h.1,\ 30.60.2.i.1,\ 30.60.2.j.1\\
	\hline
\end{longtable}

\end{document}